\numberwithin{equation}{section}
\newtheorem{theorem}{Theorem}[section]
\newtheorem{lemma}[theorem]{Lemma}
\newtheorem{definition}[theorem]{Definition}
\newtheorem{remark}[theorem]{Remark}
\newtheorem{proposition}[theorem]{Proposition}
\newtheorem{corollary}[theorem]{Corollary}
\newcommand{\de}{\mathrm{d}}
\newcommand{\bff}{\boldsymbol{f}}
\newcommand{\by}{\boldsymbol{y}}
\newcommand{\bY}{\mathbf{Y}}
\newcommand{\on}{{on} } 
\newcommand{\sfd}{\mathsf d}
\newcommand{\Q}{\mathbb{Q}}
\newcommand{\N}{\mathbb{N}}
\newcommand{\R}{\mathbb{R}}
\newcommand{\Probabilities}[1]{\mathscr P(#1)} 
\newcommand{\Probabilitiesone}[1]{\mathscr P_1(#1)} 
\newcommand{\Measures}{\mathscr M} 
\newcommand{\Measureso}[1]{{\mathscr M}_0({#1})}
\newcommand{\PushForward}[2]{#1_{\#}#2} 
\renewcommand\div{\operatorname{div}}
\newcommand{\norm}[1]{\lVert#1\rVert}
\newcommand{\rmC}{\mathrm C}
 \newcommand{\bb}{{\mbox{\boldmath$b$}}}
 \newcommand{\cc}{{\mbox{\boldmath$c$}}}
 \newcommand{\xx}{{\mbox{\boldmath$x$}}}
 \newcommand{\tauV}{{\kern-3pt\tau}}
  \newcommand{\YY}{{\mbox{\boldmath$Y$}}}
 \newcommand{\oVVVk}{\overline{\mbox{\boldmath$V$}}\kern-3pt}
 \newcommand{\tVVVk}{\tilde{\mbox{\boldmath$V$}}\kern-3pt}
 \newcommand{\ssigma}{{\mbox{\boldmath$\sigma$}}}
 \newcommand{\eeta}{{\mbox{\boldmath$\eta$}}}
\newcommand{\Lip}{\operatorname{Lip}}
\newcommand{\diam}{\operatorname{diam}}
\newcommand{\crd}{}
\newcommand{\cbl}{}
\newcommand{\nc}{\normalcolor}
\newcommand{\Frechet}{{F-}}
\newcommand{\Gateaux}{{G-}}
\begin{document}

\title{
Spatially Inhomogeneous Evolutionary Games
}
\date{}

\author{Luigi Ambrosio\
 \thanks{Scuola Normale Superiore, Pisa. E-mail: \textsf{luigi.ambrosio@sns.it}}
\and 
 Massimo Fornasier\
\thanks{Technische Universit\"at M\"unchen. E-mail: \textsf{massimo.fornasier@ma.tum.de}}
 \and
 Marco Morandotti\
 \thanks{Technische Universit\"at M\"unchen. E-mail: \textsf{marco.morandotti@ma.tum.de}}
 \and
  Giuseppe Savar\'e\
  \thanks{Universit\`a di Pavia. E-mail: \textsf{giuseppe.savare@unipv.it}}
   }

\maketitle

\begin{abstract}
We introduce and study a mean-field model for a system of spatially distributed players interacting through an evolutionary game driven by a replicator dynamics. 
Strategies evolve by a replicator dynamics influenced by the position and the interaction between different players and return a feedback on the velocity field guiding their motion.

One of the main novelties of our approach concerns the description of the whole system, which can be represented by an evolving probability measure $\Sigma$ \on an infinite dimensional state space (pairs $(x,\sigma)$ of position and distribution of strategies). 
We provide a Lagrangian and a Eulerian description of the evolution, and we prove their equivalence, together with existence, uniqueness, and stability of the solution.
As a byproduct of the stability result, we also obtain convergence of the finite agents model to our mean-field formulation, when the number $N$ of the players goes to infinity, and the initial discrete distribution of positions and strategies converge. 
  
To this aim we develop some basic functional analytic tools to deal with interaction dynamics and continuity equations in Banach spaces, that could be of independent interest.
  
\end{abstract}

{\bf Keywords}: evolutionary games of mean-field type, spatially inhomogeneous replicator dynamics, well-posedness of ODE, superposition principle, well-posedness of transport equations in  separable Banach spaces.

\smallskip

{\bf MSC2010}:
91A22, 
37C10,  
47J35,  
58D25,  
35Q91.  

\tableofcontents

\section{Introduction}
\subsection{Evolutionary games}

Physical systems naturally tend to minimize the potential energy. For this fundamental reason the study of steady states in physical systems is of utmost relevance, given the expected frequency for such states to occur. This is also the rationale according to which game theorists have focused on the characterization of game equilibria. Very celebrated is in fact the work of John F. Nash \cite{Nash1951}, 
where a new notion of non-cooperative equilibrium is introduced. The main result of \cite{Nash1951},
building on John von Neumann's notion of mixed strategy (necessary to ensure existence of saddle points in zero sum games with two players),
 is the existence of mixed strategy equilibria 
for non-cooperative games with any finite number of players. 
However, already at the origin of game theory, Oskar Morgenstern and John von Neumann pointed out in their classical treatise on game theory \cite{NM1947} the desirability of a ``dynamical'' approach to complement their ``static''
game solution concept.
In fact, while in physical systems evolutions towards minima of the potential energy are explained according to Newton's law (for which evolutions are, for conservative forces, the gradient flows of the potential energy), it is not at all clear whether and how in dynamical games equilibria can emerge. 
Certainly John Nash had anticipated this issue, when (in an
unpublished section of his thesis \cite{Nash1951}) he sketched a ``mass action approach'' to his
equilibrium notion which, many years later, was re-discovered as the evolutionary
approach. 
Additionally, while Nash equilibria are natural ``good'' states for non-cooperative games,
often in cooperative games, such as the one of the prisoner's dilemma, Nash equilibria  are not necessarily the most interesting or favorable states. For this reason, it would be very desirable for a  proper concept of dynamical game to be able to select either Nash equilibria or other type of steady states, according to which is more convenient.  Evolutionary games are dynamical processes describing how the distribution of strategies changes in time according to their individual success.

\subsection{Spatially homogeneous replicator dynamics }
One of most advocated mechanisms of dynamical choice of strategies is
based on a selection principle, inspired by Darwinian evolution
concepts. The main idea is to re-interpret the probability of picking
a certain strategy with the distribution of a
population of players adopting those strategies. The emergence of
steady mixed strategies would be the result of an evolutionary
selection: {\crd at discrete times} players meet randomly, interact
according to their strategies, and obtain a payoff. This payoff
determines how the frequencies in the strategies will evolve.
The discrete time stochastic evolution  described above has been formalized in \cite{BorgersSarin97} and yields,  as an appropriate limit is considered, a continuous time dynamics as follows:
In games where players can adopt pure strategies out of a finite set
of $N$ choices, we may describe those as $u_1, \ldots, u_N \in U$,
where $U$ is the set of strategies. We may denote with $\sigma_i$ the
frequency with which players pick the strategy $u_i$. 
The payoff of playing strategy $u_i$ against $u_j$ will be denoted by $J(u_i,u_j)$, 
where $J\colon U\times U \to \mathbb R$. The relative success of the strategy $u_i$ with respect to the strategies played by the population is measured by
\begin{equation}\label{discrep}
\Delta_N(u_i) = \sum_{j=1}^N J(u_i,u_j)\sigma_j -   \sum_{\ell=1}^N\sum_{j=1}^N \sigma_ \ell J(u_\ell,u_j)\sigma_j, \quad i=1,\dots,N.
\end{equation}
The relative rate of change of usage of the strategy $u_i$ is then described by
$$
\frac{\dot{\sigma}_i}{\sigma_i} = \Delta_N(u_i)= \sum_{j=1}^N J(u_i,u_j)\sigma_j -   \sum_{\ell=1}^N\sum_{j=1}^N \sigma_ \ell J(u_\ell,u_j)\sigma_j, \quad i=1,\dots,N,
$$
or
\begin{equation}\label{eq:hom_repl}
\dot{\sigma}_i =  \left ( \sum_{j=1}^N J(u_i,u_j)\sigma_j -   \sum_{\ell=1}^N\sum_{j=1}^N \sigma_ \ell J(u_\ell,u_j)\sigma_j \right)\sigma_i, \quad i=1,\dots,N.
\end{equation}
The system of ordinary differential equations \eqref{eq:hom_repl} is known as {\it replicator dynamics} in the literature of evolutionary games \cite{HSbook}. It is  one of the most popular dynamical game models, because its $\omega$-limit (the set of accumulation points of the dynamics) and steady states are closely related to the Nash equilibria of the game described by the payoff matrix $A=(J(u_i,u_j))_{ij}$ \cite[Thorem 7.2.1]{HSbook} (the  so-called  ``folk theorem  of  evolutionary  game  theory''). Moreover, as discussed in \cite{HS2003}, adopting an equilibrium-based viewpoint is often unable to always account for the long-term behaviour of realistic players, who adjust their behaviour to maximise their payoff.  The replicator dynamics aims at being a more robust model.

\subsection{Mean-field replicator dynamics}
There is by now a large scope of literature addressing the replicator dynamics for infinite or continuous strategies \cite{Bomze1990,Cressman2005,DJMR2008,Galstyan2013,HOR2009,Norman2008,OR2001,RR2015}, which can be viewed as a natural limit for $N\to \infty$ of   system \eqref{eq:hom_repl}. The way of deducing this limit is by defining the probability measure
$$
\sigma^N_t \coloneqq \sum_{j=1}^N \sigma_{j,t} \delta_{u_j} \in \Probabilities{U},
$$
and its evolution according to 
$$
\dot{\sigma}^N_{t} =  \left ( \int_{U}
J(\cdot ,u') \, \de \sigma^N_t(u') - \int_{U \times U} J(w,u') \, \de
\sigma_t^N(w) \de \sigma_t^N(u') \right) \sigma_t^N, $$
or, in weak form,
$$
\frac{\de}{\de t} \int_U \varphi(u)\, \de \sigma^N_t(u) = \int_U \varphi(u) \left ( \int_{U} J(u,u') \, \de \sigma^N_t(u') - \int_{U \times U} J(w,u') \, \de \sigma^N_t(w) \de \sigma^N_t(u') \right)\, \de \sigma_t^N(u),
$$
for any $\varphi \in \mathrm C(U)$. For any $\sigma \in \Probabilities{U}$, we may denote 
$$
\Delta_\sigma(u) \coloneqq \left ( \int_{U} J(u,u') \, \de \sigma(u') - \int_{U \times U} J(w,u') \, \de \sigma(w) \de \sigma(u') \right),
$$
so that $\Delta_{\sigma}(u_i) = \Delta_N(u_i)$ as in \eqref{discrep} for $\sigma=\sum_{j=1}^N \sigma_j\delta_{u_j}$.
By assuming that the initial conditions $\sigma_0^N \rightharpoonup \bar \sigma$ for a given $\bar{\sigma} \in \Probabilities{U}$, one can show that $\sigma^N_t \rightharpoonup \sigma_t$ for $N \to \infty$ for any $t$, where $\sigma$ is the solution to
$$
\frac{\de}{\de t} \int_U \varphi(u)\, \de \sigma_t(u) = \int_U \varphi(u) \Delta_\sigma(u)\, \de \sigma_t(u), \qquad \sigma(0) = \bar{\sigma}.
$$
A result of well-posedness of such equation for special choices of $U$ is obtained for instance in \cite{BNP2011}.
 In contrast with finite strategy spaces, where the notion of equilibrium is well understood and studied \cite{HSbook,Weibull1995}, the situation of games with infinite strategies has been missing for a long time a general theory due to the technical and conceptual difficulties stemming from understanding which notion of distance between probability measures was the most suitable to use \cite{Norman2008}. 
 Our approach uses the classical transport distances, and the general frame of evolution problems in the class of probability measures, see for instance \cite{AGS2008} for a systematic treatment of this topic; see also  \cite{CCR,Piccoli2017} for recent contributions.
Some results of asymptotic behavior and the stability of solutions are given in \cite{BNP2011}.

\subsection{Spatially inhomogeneous replicator dynamics}

In this paper, differently from spatially homogenous dynamical games, we assume that the population of players is distributed over a {position} space and that they are each endowed with probability distributions of strategies, which they draw at random to evolve their {positions}. The positions of the players are assumed to be in the $d$-dimensional Euclidean space $\mathbb R^d$ and the pure strategies $u$ are in a compact metric space $U$; a probability measure $\sigma\in\Probabilities{U}$ denotes a mixed strategy.

With these definitions, the space of pairs of positions and mixed
strategies is $C\coloneqq\mathbb R^d \times \Probabilities{U}$, whose
elements are pairs ${\cbl{} y=}(x,\sigma)$ describing the state of a player. 
The system will be described by the evolution of
a measure $\Sigma\in\Probabilities{C}=\Probabilities{ \mathbb R^d
  \times\Probabilities{U}}$
\on our state space, which represents a distribution of players with strategies.
Notice that such a measure $\Sigma$ well describes the superposition of players with different strategies that at some time occupy the same 
position.

Omitting for the time being the temporal variable, we proceed to the description of the dynamics of the pair $y\coloneqq(x,\sigma)\in C$.
The player $x$ moves with a velocity which is obtained by averaging over all the strategies a suitable function $e\colon \mathbb{R}^d \times U\to\mathbb{R}^d$, namely
\begin{equation}\label{mm001}
\dot x=\int_U e(x,u)\,\de\sigma(u).
\end{equation}
For instance, in the simplest case when $e(x,u)=u$ and the set of strategies $U$ consists of a finite subset of
$\mathbb{R}^d$, this ``mean velocity'' dynamics can be thought of as the outcome of a faster time scale.
For convenience, it is useful to see the right-hand side of \eqref{mm001} as the result of a map $a\colon C\to \mathbb{R}^d$, 
by means of defining
\begin{equation}\label{mm010}
a(y)=a(x,\sigma)\coloneqq\int_U e(x,u)\,\de\sigma(u).
\end{equation}
Notice that $a$ depends linearly on $\sigma$.
In order to write the evolution law for the strategies 
we will assume that it is driven by an interaction mechanism, which
depends only on the state of the system (position and strategies) and does not distinguish two players 
occupying the same place with the same instantaneous strategy distribution.
Thus we consider a Lipschitz function 
\begin{equation}
\label{ourJ}
J\colon (\R^d\times U)^2\to\mathbb{R}
\end{equation}
and we define an interaction potential
\begin{equation}\label{mm002}
\Delta\colon\Probabilities{C}\times C \to \mathrm C(U), \qquad  (\Sigma,{\cbl{}y}) \mapsto \Delta_{\Sigma,{\cbl{}y}},
\end{equation}
by setting (notice that the integrals make sense, under suitable moment assumptions on $\Sigma$, since $J$ has at most
linear growth)
\begin{equation}\label{mm003}
\begin{split}
\Delta_{\Sigma,{\cbl{}y}}=\Delta_{\Sigma,{\cbl{}(x,\sigma)}}(u)\coloneqq &\int_C\int_U J(x,u,x',u')\,\de\sigma'(u')\,\de\Sigma(x',\sigma') \\
& -\int_U\int_C\int_U J(x,w,x',u')\,\de\sigma'(u')\,\de\Sigma(x',\sigma')\,\de\sigma(w). 
\end{split}
\end{equation}
The evolution law for the mixed strategies of the player at $x$
is again according to a replicator dynamics similar to the ones
mentioned above, and it can be written as 
\begin{equation}\label{mm004}
\dot\sigma=\Delta_{\Sigma,{\cbl{}(x,\sigma)}}\,\sigma.
\end{equation}
The interaction potential $\Delta$ has here the following simple interpretation: $J(x,u,x',u')$ represents the contribution to the payoff that the player $x$ gets from the pure strategy $u$ assuming that the player $x'$ acts with pure strategy $u'$.
When player $x'$ acts with mixed strategy $\sigma'$, we obtain
\begin{equation}\label{mm005}
\mathcal J(x,u,x',\sigma')\coloneqq\int_U J(x,u,x',u')\,\de\sigma'(u').
\end{equation}
Therefore, the full payoff of $x$ given the pure strategy $u$ is
\begin{equation}\label{mm005bis}
\int_C \mathcal J(x,u,x',\sigma')\,\de\Sigma(x',\sigma'),
\end{equation}
corresponding to the first integral in \eqref{mm003}.
The second term is the integral of this quantity with respect to $\sigma$, thus the payoff expected by $x$ from the mixed strategy
$\sigma$, given the full distribution $\Sigma$. 

We remark that the dependence of the payoff in \eqref{mm005} on the full strategy $\sigma'$ of $x'$, {\cbl{} which may imply a certain level of anticipation,} is made for the sake of generality: in practical situations $\mathcal
J$ may only depend on a marginal of $\sigma'$, and the strength of the interaction between players
may be influenced by their distance $|x-x'|$ through the function $J$.
From another point of view, this
full dependence (as the expectations of the players on the future in the mean field games theory, see \cite{caha17}
and the short discussion in Section~\ref{sec:compMFG}) could emerge
from a repetition of the evolutionary game, see \cite{BorgersSarin97} for a contribution in this direction in the spatially homogeneous
case.   

Putting together \eqref{mm010} and \eqref{mm004}, we can write the evolution for $y$ as
\begin{equation}\label{mm006}
\dot y=(\dot x,\dot\sigma)=\big(a(x,\sigma),\Delta_{\Sigma,{\cbl{}(x,\sigma)}}\,\sigma\big)\eqqcolon b_\Sigma(y),
\end{equation}
which can be interpreted as an ODE in the convex set $C$.
The theory of ODEs in Banach spaces (see Appendices \ref{app:calc} and \ref{sec:regularity}, or \cite[Partie II]{Cartan1967}, \cite[Chapter X]{Dieudonne1960} for classical monographs on this topic) will be a useful tool in the study of
the well posedness of \eqref{mm006}, the existence, uniqueness, and stability of its solutions, and their properties.

In order to do so, we embed $C$ in the Banach space
$Y\coloneqq\mathbb{R}^d\times F(U)$, where
$F(U)\coloneqq\overline{\operatorname{span}(\Probabilities{U})}^{\norm{\cdot}_{\rm
    BL}}$ and the closure is taken in  the dual space $(\mathrm{Lip}(U))'$ with respect to  
the dual norm (also called bounded Lipschitz norm)
\begin{equation}\label{mm200}
\norm\ell_{\rm BL}\coloneqq\sup \big\{\langle \ell,\varphi\rangle:  \varphi\in \mathrm{Lip}(U),\   \|\varphi\|_{\Lip}\le 1\big\},
\end{equation}
where $\|\cdot\|_{\mathrm{Lip}}$ is the Lipschitz norm in \eqref{eq:Lipnorm} below.
Notice that $F(U)\subset (\mathrm{Lip}(U))'$, and that $(Y,\norm\cdot_Y)$, with $\norm{y}_Y=\norm{(x,\sigma)}_Y\coloneqq|x|+\norm\sigma_{\rm BL}$, is a separable Banach space.
The space $F(U)$ defined above, known in the literature as the Arens-Eells space \cite{AE1956}, is isometric to the predual of ${\rm Lip}(U)$,  see \cite{AP,We} for more details also on the space $(\mathrm{Lip}(U))'$.

\subsection{Formal derivation of a nonlinear master equation}\label{formalder}\protect 

We fix a time interval $[0,T]$, a time step $h=T/N$ and an initial datum $\bar\Sigma\in \Probabilities{\mathbb R^d\times\Probabilities{U}}$, assuming for simplicity 
that the first marginal of $\bar\Sigma$ is compactly supported. Recalling the notation $C=\mathbb R^d\times\Probabilities{U}$ and its natural
structure of convex set, we build a discrete solution 
$$M_h\in\Probabilities{\rmC([0,T];C)}$$ 
concentrated on paths $(x(t),\sigma(t))\colon[0,T]\to C$, which are piecewise affine (in the $N$ intervals). 
We denote $\Sigma_{t,h}\coloneqq(\mathrm{ev}_t)_\#M_h$, where $\mathrm{ev}_t\colon \rmC([0,T];C) \to C$ 
defined by $\mathrm{ev}_t(x,\sigma)\coloneqq (x(t),\sigma(t))$. 
In particular, $(\mathrm{ev}_0)_\#M_h =\bar\Sigma$ is the given initial condition.

The heuristic idea is the following: If the player at time $t=ih$, for $i\in\{0,\ldots,N\}$, is in the
position $\bar x$, with mixed strategy represented by the probability measure
$\bar\sigma$, first they upgrade their belief on the probability replacing $\bar\sigma$ by
$$
\bar\sigma'\coloneqq(1+h\Delta_{\Sigma_{t,h},{\cbl{}(\bar x,\bar\sigma)}})\,\bar\sigma.
$$
Then, they move to the next position $\bar x+h e(\bar x,u)$ choosing $u$ with probability $\bar\sigma'$ 
and carrying the same probability $\bar\sigma'$  to the new position. 
The probability measure $M_h$ takes all the future stochastic realizations into account.
In more formal terms, the conditional probability relative to $M_h\vert_{[0,t+h]}$ of $(x(t+h),\sigma(t+h))$, given the information 
that at time $t$ one has $(x,\sigma)(t)=(\bar x,\bar\sigma)$, is $((\bar x+h e(\bar x,\cdot))_\#\bar\sigma')\times\delta_{\bar\sigma'}$. 
By iterating this process $N$ times one can build $M_h$ on the whole time interval $[0,T]$ (alternatively, one can view this as a 
Markov process with the above defined transition probabilities and build first a measure $N_h$ in $\Probabilities{C}^{N+1}$, then 
$M_h$ by associating to $N+1$ points in $\Probabilities{C}$ a piecewise affine path in $[0,T]$).

Under boundedness assumptions on the field $e\colon\mathbb R^d\times U\to \mathbb R^d$ and on the interaction potential $\Delta$, 
it turns out that $M_h$ is concentrated on paths $(x(t),\sigma(t))$ satisfying the equi-Lipschitz property
\begin{equation}\label{eqLip}
  |x(s)-x(t)|\leq L |s-t|,\quad\|\sigma(s)-\sigma(t)\|_{\rm BL}\leq L|s-t|, \qquad \text{for all $0\leq s\leq t\leq T$,}
\end{equation}
with $L=L(e,J,U)\geq 0$. 

Given now a bounded test function $\Phi\colon Y  \to \mathbb R$ of class $\rmC^1$ (in the Fr\'echet sense) with respect to the $\|\cdot\|_{Y}$ norm, 
let us write a discrete continuity equation associated to $\Sigma_{t,h}$, the marginals of $M_h$ in the sense 
$\Sigma_{t,h}\coloneqq(\mathrm{ev}_t)_\#M_h$.
For $t=ih$, $0\leq i\leq N-1$, one has 
\begin{eqnarray*}
&&\int\Phi(x,\sigma){\, \crd  \mathrm{d}}(\Sigma_{t+h,h}-\Sigma_{t,h})(x,\sigma)\\&=&\int\bigl[ \Phi(x(t+h),\sigma(t+h))-\Phi(x(t),\sigma(t))\bigr]\,{\crd  \mathrm{d}}M_h(x(\cdot),\sigma(\cdot))\\
&=&\int\bigl[ \Phi(x(t+h),(1+h \Delta_{\Sigma_{t,h},x(t),\sigma(t)})\sigma(t))-\Phi(x(t),\sigma(t))\bigr]\,{\crd  \mathrm{d}}M_h(x(\cdot),\sigma(\cdot))\\
&\sim& h \int D \Phi(x(t),\sigma(t))\cdot \big(a(x(t),\sigma(t)),\Delta_{\Sigma_{t,h},x(t),\sigma(t)}\sigma(t) \big)\,{\crd  \mathrm{d}}M_h(x(\cdot),\sigma(\cdot))\\
&=& h\int D \Phi(x,\sigma)\cdot b_{\Sigma_{t,h}}(x,\sigma)\,{\crd  \mathrm{d}}\Sigma_{t,h}(x,\sigma),
\end{eqnarray*}
where $b_\Sigma$ is given by \eqref{mm006}, and we used the chain rule for Fr\'echet differentiation.

Recalling from \eqref{eqLip} that $M_h$ is concentrated on equi-Lipschitz paths $t \mapsto (x(t), \sigma(t))$, the family 
$\{M_h\colon h=T/N,\,N\geq 1\}$ is weakly compact in the space  $\Probabilities{\rmC([0,T];C)}$ and by Prokhorov theorem  
it has limits points as $h\to 0$. Any limit point $M$ is concentrated on Lipschitz paths satisfying \eqref{eqLip} and
this construction builds a continuous map $\Sigma\colon[0,T]\to C$ by $\Sigma_{t}\coloneqq(\mathrm{ev}_t)_\#M$, satisfying the equation
\begin{equation}
\label{eq:master0}
\partial_t \Sigma_t + \operatorname{div} (b_{\Sigma_t}\,  \Sigma_t)=0,
\end{equation}
in the weak sense of \eqref{eq:32} below, with $\bar\Sigma$ as initial
condition. In the following, we explore notions of solutions to
\eqref{eq:master0} and conditions for their existence and uniqueness,
starting from the corresponding finite agents model. 

\subsection{Comparison with mean-field games}\label{sec:compMFG}

The master equation \eqref{eq:master0} is a novel model of spatially non-homogenous evolutive games, which fuses mean-field theory, optimal transport, and replicator dynamics from evolutionary game theory.
There are other approaches towards modeling spatially non-homogenous games of a large population of indistinguishable agents. Perhaps the most prominent is the so-called theory of {\it mean-field games}. This class of problems was considered in the economics literature by Boyan Jovanovic and Robert W.~Rosenthal \cite{joro88}, 
  in the engineering literature by Peter E.~Caines, Minyi Huang, and Roland P.~Malham\'e \cite{huang2005a,huang2006a}, 
  and independently and around the same time by the mathematicians Jean-Michel Lasry and Pierre-Louis Lions \cite{LL2007}.\\
 
In continuous time a mean-field game is typically composed of a Hamilton-Jacobi-Bellman equation for the optimal control problem of an individual, and a forward Fokker-Planck-Kolmogorov equation for the dynamics of the aggregate distribution of agents. 
Under fairly general assumptions, it can be proven that a mean-field game is the limit as $N\to \infty$ of a $N$-player Nash equilibrium.
In particular, one can consider the stochastic evolution of $N$ players dictated by equations
\begin{equation}\label{eq:sde}
\de X_t^i= u_t^i \de t + \sqrt {2\nu} \de B_t^i, \qquad i=1,\ldots,N,
  \end{equation}
 where $X_0^i$ are independently drawn at random according to a probability distribution $\mu_0 \in \Probabilities{\R^d}$, $B_t^i$ are independent $d$-dimensional Brownian motions, and $\nu>0$ is a noise parameter, so that the limiting case $\nu=0$ corresponds to the so-called first-order games. The player $i$ can choose a strategy $u^i$
adapted to the filtration
 \begin{equation}\label{eq:filtr}
\mathcal F_t= \sigma(X_0^j,B_s^j\colon s\leq t,\,j=1,\ldots,N).
\end{equation}
The payoff of player $i$ is given by 
  \begin{equation}\label{eq:payoff}
J_i^N(u^1,\dots,u^N)= \mathbb E \left [ \int_0^T \frac{1}{2} |u_t^i|^2  + F\bigg (X_t^i,\frac{1}{N-1} \sum_{j\neq i} \delta_{X_t^j} \bigg ) \de t    \right ].
\end{equation}
The solution of the $N$-player game is the suitable minimization of \eqref{eq:payoff} under the constraints \eqref{eq:sde} for all $i=1,\dots,N$. Hence, the appropriate notion of solution may be precisely the Nash equilibrium, i.e., a configuration of strategies $(u^{*,1},\dots,u^{*,N})$ such that
$$
J_i^N(u^{*,1},\dots,u^{*,N}) \leq J_i^N(u^{*,1},\dots,u^{*,i-1},u, u^{*,i+1}, \dots,u^{*,N}), \qquad \mbox{for all } u,
$$
for all $i=1,\ldots,N$. The computation of such equilibria becomes intractable already for a moderate number $N$ of players. However, for $N$ very large a mean-field approximation for $N\rightarrow \infty$ may help to obtain approximate Nash equilibria. In particular for $N\rightarrow \infty$ one can approximate the empirical distribution $\mu_t^N =\frac{1}{N} \sum_{i=1}^N \delta_{X_t^i}$ supported on realizations of \eqref{eq:sde} by the solution of the forward Fokker-Planck-Kolmogorov equation
\begin{equation}\label{FPeq}
\partial_t \mu - \nu\Delta \mu {\crd + \div (u \mu)} =0,
\end{equation}
for an appropriate control function $u(t,x)$. {\cbl{} For a given time-dependent distribution $\hat{\mu}_t$, the} 
corresponding payoff functional would be given by
\begin{equation}\label{OC}
J(u) =  \int_0^T  \int_{\R^d} \left ( \frac{1}{2} |u(t,x)|^2  + F\left (x,\hat \mu_t \right )\right) \de\mu_t \de t.
\end{equation}
The solution of the mean-field game comes from the minimization of \eqref{OC} under the PDE constraints \eqref{FPeq} and the fixed point condition $\mu_t= \hat \mu_t$, and these yield the mean-field game system
\begin{equation}\label{MFGeq}
\left \{
\begin{aligned}
  -\partial_t \psi -\nu\Delta \psi + \frac{1}{2} |\nabla \psi |^2&= F(x, \mu),\\
   \partial_t \mu - \nu\Delta \mu  -  \div (\nabla \psi \mu) &=0,
  \end{aligned}
   \right .
\end{equation}
with appropriate inital and terminal conditions. The choice {\crd  $u(t,x) =- \nabla \psi(t,x)$} 
yields  nearly optimal strategies to be inserted in \eqref{eq:sde} by posing $u_t^i= u(t,X_t^i)$.
Several comments about differences between our spatially non homogeneous evolutionary game and mean-field games are in order:
\begin{itemize}
\item Mean-field games are  intimately linked to the theory of (stochastic) mean-field control and as such are global-in-time optimizations: in other words, one has to forecast the future behavior of the agents in order to be able to solve the game and this is mathematically expressed by the need of simultaneously solving a backward-in-time evolution to compute the optimal strategy. 
Our model is a simple forward-in-time evolution according to the master equation \eqref{eq:master0} in a continuous local-in-time search for pairwise-game equilibria.
\item Mean-field games are built around the concept of Nash equilibrium for non-cooperative games, one of the notion of solutions considered in evolutionary game theory, which also aims at reaching different situations.
\item Well-posedness of  the mean-field game system \eqref{MFGeq} has been shown  for  special choices of costs $F$, 
the so-called potential games \cite{LL2007}. Accordingly, the numerical solution of the mean-field game system, see also, e.g., \cite{acca10} for alternative approaches, could be based on the iterative solution of backward-forward system by means of individual solvers for the two equations: starting from a given initial trajectory $t\mapsto \mu_t^{(0)}$, one iterates the numerical solution for $n=0,1,2,\ldots$
 \begin{equation}
\label{MFGeqnum} 
\left \{
\begin{array}{l}
  -\partial_t \psi^{(n)} -{\cbl{}\nu}\Delta \psi^{(n)} + \frac{1}{2} |\nabla \psi^{(n)} |^2= F(x,{\crd  \bar \mu^{(n)}}),\\
   \partial_t \mu^{(n+1)} - {\cbl{}\nu}\Delta \mu^{(n+1)} -  \div (\nabla \psi^{(n)} \mu^{(n+1)}) =0,
  \end{array}
   \right .
   \end{equation}
where $ \bar \mu^{(n)}= \frac{1}{n} \sum_{i=1}^n \mu^{(i)}$, and with appropriate initial and terminal conditions. This iterative procedure, also called  {\it learning}  or {\it fictitious play} in \cite{caha17}
{\cbl provides a justification to the ``knowledge about the future'' incorporated in the model}, and is shown to converge, at least for potential games. For our model,
we will prove that the well-posedness (existence, stability, and uniqueness) of the master equation \eqref{eq:master0} is ensured under Lipschitz assumptions on the function $e$ in \eqref{mm010} and on our pairwise-game payoff function $J$ defined in \eqref{ourJ}. In addition, the formal derivation of the master equation \eqref{eq:master0} as in Section~\ref{formalder} provides already a rather clear path towards a time marching numerical solution.
\end{itemize}

\section{Measure theoretic preliminaries}
Before exploring notions of solutions to  \eqref{eq:master0} and conditions for their existence and uniqueness, we make more precise the functional setting where the evolutions governed by \eqref{eq:master0} take place. We use here differential and Bochner calculus in separable Banach spaces and we refer to the 
Appendix~\ref{app:calc} for some related basic notions and results. 

\subsection{Notation and distances in the space of measures}

If $(X,\sfd_X)$ is a metric space, we denote by $\Measures(X)$ the space of signed Borel measures in $X$ with finite total variation,
by $\Measures_+(X)$, $\Probabilities{X}$ the convex subsets of nonnegative measures and probability measures respectively. 
For $\sigma\in\Measures(X)$, $|\sigma|\in\Measures_+(X)$ denotes the total variation measure of $\sigma$ (see also \eqref{eq:11} below).
{We shall also use the notation $\Measures_0(X)$ for the subset of measures with $0$ mean; we shall use the
identity $\R(\Probabilities{X}-\Probabilities{X})=\Measures_0(X)$ (in the sense of  Minkowski sums)
provided by the Hahn decomposition theorem for signed measures.

For a Lipschitz function $f\colon X\to\R$ we denote by
$$
\mathrm{Lip}(f)\coloneqq\sup_{x,\,y\in X \atop x\neq y}\frac{|f(x)-f(y)|}{\crd  \sfd_X(x,y)}
$$
the Lipschitz constant and denote by $\mathrm{Lip}_b(X)$ the space of bounded Lipschitz functions.

Given $\mu\in\Measures_+(X)$ and  $f {\crd \colon} X\to Y$, with $f$
$\mu$-measurable, we shall denote by $\PushForward{f}{\mu}\in\Measures_+(Y)$ the push-forward measure, having the same mass as $\mu$ and
defined by $\PushForward{f}{\mu}(B)=\mu(f^{-1}(B))$ for any Borel set $B\subset Y$ (when $\mu\in\Probabilities{X}$, in {\crd  probability theory}
it is also named law of $f$ under $\mu$); we shall also often use the change of variables formula
$$
\int_Y g\,\de\PushForward{f}{\mu}=\int_X g\circ f\,\de\mu
$$
whenever either one of the integrals makes sense.

In a complete and separable metric space $(X,\sfd_X)$,  we shall use the Kantorovich-Rubinstein
(possibly infinite) distance $W_1(\mu,\nu)$ in the class $\Probabilities{X}$; thanks to Kantorovich duality, the definition 
$$
W_1(\mu,\nu)\coloneqq\sup\left\{\int_X\varphi\,\de\mu-\int_X\varphi\,\de\nu\colon \text{$\varphi\in\mathrm{Lip}_b(X)$, $\mathrm{Lip}(\varphi)\leq 1$}\right\}
$$
is equivalent to the one
$$
W_1(\mu,\nu)\coloneqq\inf\left\{\int_{X\times X} {\crd  \sfd_X(x,y)}\,\de\Pi(x,y)\colon
\Pi(A\times X)=\mu(A),\,\,\Pi(X\times B)=\nu(B)\right\}
$$
involving couplings $\Pi$ of $\mu$ and $\nu$, but we shall mostly be working with the first one. Notice that $W_1(\mu,\nu)$ is finite
if $\mu,\,\nu$ belong to the space
\begin{equation}\label{perone}
\Probabilitiesone{X}\coloneqq \left\{\mu\in\Probabilities{X}\colon \text{$\int_X \sfd_X(x,\bar x)\,\de\mu(x)<+\infty$ for some $\bar x\in X$}\right\}
\end{equation}
and that $(\Probabilitiesone{X},W_1)$ is complete if $(X,\sfd_X)$ is complete.
Recall also that the convergence of $\mu_h\in\Probabilitiesone{X}$ to $\mu\in\Probabilitiesone{X}$ with respect to the distance $W_1$ is equivalent to weak convergence in the duality with bounded Lipschitz functions plus convergence of first
moments \cite{AGS2008}, namely $\int_X \sfd_X(\cdot, \bar x)\,\de\mu_h\rightarrow \int_X \sfd_X(\cdot,\bar x)\,\de\mu$
for all $\bar x\in X$.

We also need a $\rmC^1_b$ variant of Kantorovich duality, valid in separable Banach spaces, stated below.

\begin{lemma}[$\mathrm{C}^1$ duality]\label{sfdc1}
For any separable Banach space $Y$ one has that
$$
\sfd_{\rmC^1}(\mu,\nu) \coloneqq  \sup_{\phi \in \rmC^1_b(Y)\atop {\mathrm{Lip}(\phi)\leq 1}} 
\bigg ( \int_Y \phi \,\de\mu - \int_Y \phi \,\de\nu \bigg )
$$
defines a distance in $\Measures(Y)$, which coincides with $W_1$ when restricted to $\Probabilitiesone{Y}$. 
\end{lemma}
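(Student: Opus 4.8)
The plan is to establish three things: that $\sfd_{\rmC^1}$ is a (possibly extended-valued) distance on $\Measures(Y)$, that $\sfd_{\rmC^1}\le W_1$ on $\Probabilitiesone Y$, and the matching lower bound $\sfd_{\rmC^1}\ge W_1$. The first two are soft. Symmetry and the triangle inequality hold because $\sfd_{\rmC^1}$ is a supremum of the linear functionals $\mu\mapsto\int_Y\phi\,\de\mu$ over a class of test functions that is symmetric under $\phi\mapsto-\phi$; nonnegativity follows since the constant $\phi\equiv 0$ is admissible. For the separation property, from $\sfd_{\rmC^1}(\mu,\nu)=0$ one first tests against constants to get $\mu(Y)=\nu(Y)$, and then, since every $\phi\in\rmC^1_b(Y)$ is globally Lipschitz with $\mathrm{Lip}(\phi)=\sup_Y\|D\phi\|$, rescaling yields $\int_Y\phi\,\de\mu=\int_Y\phi\,\de\nu$ for all $\phi\in\rmC^1_b(Y)$, in particular for all smooth cylindrical functions $g(\ell_1,\dots,\ell_k)$ with $g\in\rmC^1_b(\R^k)$ and $\ell_i\in Y^*$; as these determine finite signed measures on the separable space $Y$ (cylinder sets generate $\BorelSets{Y}$), we conclude $\mu=\nu$. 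The inequality $\sfd_{\rmC^1}\le W_1$ on $\Probabilitiesone Y$ is immediate from the inclusion $\{\phi\in\rmC^1_b(Y):\mathrm{Lip}(\phi)\le1\}\subset\{\phi\in\mathrm{Lip}_b(Y):\mathrm{Lip}(\phi)\le1\}$ together with Kantorovich duality.

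The heart of the matter is the reverse inequality, which I would obtain by a finite-dimensional reduction manufacturing genuinely Fréchet-$\rmC^1$, globally $1$-Lipschitz test functions. Using separability of $Y$, fix a countable norming family $\{\ell_n\}\subset Y^*$ with $\|\ell_n\|\le1$ and $\sup_n\ell_n(y)=\|y\|$ for all $y$ (obtained by Hahn--Banach at the points of a dense sequence), and set $T_N\colon Y\to\R^N$, $T_N(y)=(\ell_1(y),\dots,\ell_N(y))$, where $\R^N$ carries the norm $\|\cdot\|_\infty$. Then $T_N$ is $1$-Lipschitz, and $\|T_N y-T_N y'\|_\infty=\max_{n\le N}|\ell_n(y-y')|$ increases monotonically to $\|y-y'\|$. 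Writing $\mu_N\coloneqq(T_N)_\#\mu$ and $\nu_N\coloneqq(T_N)_\#\nu$ (which lie in $\Probabilitiesone{\R^N}$ since $\|T_N y\|_\infty\le\|y\|$), the monotone convergence of the cost functions $c_N(y,y')=\|T_Ny-T_Ny'\|_\infty\uparrow\|y-y'\|$ passes to the optimal transport values, giving $W_1(\mu_N,\nu_N)\uparrow W_1(\mu,\nu)$; I would justify the limit by extracting a weak limit of optimal couplings and using $c_M\le c_N$ for $M\le N$ together with the uniform first-moment bound.

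It then remains to recover $W_1(\mu_N,\nu_N)$ from admissible test functions on $Y$. By finite-dimensional Kantorovich duality on $(\R^N,\|\cdot\|_\infty)$, $W_1(\mu_N,\nu_N)$ is approximated by $\int_{\R^N}g\,\de(\mu_N-\nu_N)$ over $\|\cdot\|_\infty$-$1$-Lipschitz $g$; truncating and convolving with a standard mollifier makes $g$ lie in $\rmC^1_b(\R^N)$ while preserving $\mathrm{Lip}_{\|\cdot\|_\infty}(g)\le1$ and changing the integral arbitrarily little. The pullback $\phi\coloneqq g\circ T_N$ is a smooth cylindrical function, hence Fréchet-$\rmC^1$ and bounded, with $D\phi(y)=\sum_{n\le N}\partial_n g(T_Ny)\,\ell_n$ continuous; crucially, since $T_N$ is $1$-Lipschitz into the sup-norm, $\mathrm{Lip}(\phi)\le\mathrm{Lip}_{\|\cdot\|_\infty}(g)\le1$, so $\phi$ is admissible in the definition of $\sfd_{\rmC^1}$. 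The change-of-variables identity $\int_Y\phi\,\de(\mu-\nu)=\int_{\R^N}g\,\de(\mu_N-\nu_N)$ then yields $\sfd_{\rmC^1}(\mu,\nu)\ge W_1(\mu_N,\nu_N)$, and letting $N\to\infty$ gives $\sfd_{\rmC^1}(\mu,\nu)\ge W_1(\mu,\nu)$, completing the identification on $\Probabilitiesone Y$. The main obstacle is precisely the infinite-dimensional smoothing: naive mollification (e.g.\ Gaussian convolution) only yields differentiability along a Cameron--Martin subspace rather than full Fréchet regularity, and it is the factorization through the $1$-Lipschitz maps $T_N$ into $(\R^N,\|\cdot\|_\infty)$ that simultaneously restores genuine Fréchet-$\rmC^1$ smoothness and, thanks to the sup-norm, keeps the global Lipschitz constant equal to $1$.
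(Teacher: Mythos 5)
Your proposal is sound in its overall architecture and follows a genuinely different route from the paper's. On the metric axioms and the easy inequality $\sfd_{\rmC^1}\le W_1$ the two arguments essentially agree (both reduce non-degeneracy to smooth cylindrical functions and to the fact that, by the norming sequence of \eqref{goodyi}, cylindrical sets generate the Borel $\sigma$-algebra). For the crucial inequality $\sfd_{\rmC^1}\ge W_1$, however, the paper argues by a lattice-closure device: for fixed $a>0$ it shows that the class $\mathcal G_a$ of $1$-Lipschitz functions bounded by $a$ satisfying the dual inequality is closed under pointwise limits and under countable suprema/infima (via the smoothed maxima $S_{N,h}$), contains the smooth cylindrical functions, hence contains every $y\mapsto -a\lor(b+\|y-y_0\|)\land a$, and finally, by the McShane-type representation over a countable dense set $Y_0$, all of $\mathcal L_a$. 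You instead project onto $(\R^N,\|\cdot\|_\infty)$ by $T_N=(\ell_1,\dots,\ell_N)$, use finite-dimensional Kantorovich duality together with truncation and mollification, and pull back; your observation that $T_N$ is $1$-Lipschitz into the \emph{sup-norm}, so that $\Lip(g\circ T_N)\le \Lip_{\|\cdot\|_\infty}(g)\le 1$, is exactly right and plays the role that \eqref{goodyi} plays in the paper. Your route even yields a slightly stronger conclusion (smooth cylindrical $1$-Lipschitz functions already saturate $W_1$), at the price of bringing in pushforward measures and finite-dimensional optimal transport.

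There is, though, one genuine gap: the justification of $W_1(\mu_N,\nu_N)\uparrow W_1(\mu,\nu)$. Monotonicity and the bound $W_1(\mu_N,\nu_N)\le W_1(\mu,\nu)$ are fine ($1$-Lipschitz pushforwards), but ``extracting a weak limit of optimal couplings'' does not parse as written: an optimal coupling $\gamma_N$ of $(\mu_N,\nu_N)$ lives on $\R^N\times\R^N$, a space changing with $N$, so there is no fixed space in which to invoke Prokhorov. The repair is to lift to $Y\times Y$ first: disintegrate $\mu=\int\mu_u\,\de\mu_N(u)$ and $\nu=\int\nu_v\,\de\nu_N(v)$ over $T_N$ and set $\Pi_N\coloneqq\int \mu_u\otimes\nu_v\,\de\gamma_N(u,v)$, a coupling of $(\mu,\nu)$ with $\int c_N\,\de\Pi_N=W_1(\mu_N,\nu_N)$, since $c_N(y,y')\coloneqq\|T_Ny-T_Ny'\|_\infty$ is constant on $T_N^{-1}(u)\times T_N^{-1}(v)$. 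The couplings of $(\mu,\nu)$ form a weakly compact set ($Y$ is Polish), so $\Pi_{N_k}\rightharpoonup\Pi^*$ along a subsequence; for fixed $M$, lower semicontinuity of $\Pi\mapsto\int c_M\,\de\Pi$ under weak convergence gives $\int c_M\,\de\Pi^*\le\liminf_k\int c_{N_k}\,\de\Pi_{N_k}=\lim_N W_1(\mu_N,\nu_N)$, and letting $M\to\infty$ by monotone convergence yields $W_1(\mu,\nu)\le\int\|y-y'\|\,\de\Pi^*\le\lim_N W_1(\mu_N,\nu_N)$; no uniform integrability is actually needed. (If instead you meant optimal couplings of $(\mu,\nu)$ for the cost $c_N$ on $Y\times Y$, the same disintegration is needed to identify that infimum with $W_1(\mu_N,\nu_N)$.) I stress that this step is genuinely delicate and the primal/coupling route is the right one: the tempting dual shortcut --- replacing a near-optimal $1$-Lipschitz $f$ for $W_1(\mu,\nu)$ by the inf-convolution $g_N(u)\coloneqq\inf_y\{f(y)+\|T_Ny-u\|_\infty\}$ and pulling back --- fails when $Y$ is infinite dimensional, because $c_N$ vanishes on $y+\ker T_N$ and the inf-convolution collapses: for $f(y)=-\min(\|y\|,1)$ one gets $g_N(0)\le\inf_{z\in\ker T_N}f(z)=-1<0=f(0)$ for every $N$. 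With the disintegration-lifting argument in place, your proof is complete.
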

\begin{proof}
Symmetry and triangle inequality are obvious. To prove the non-degeneracy, notice that
any cylindrical function  $\phi(y)=\psi(\langle y,z_1'\rangle,\dots,\langle y,z_d'\rangle)$, with $\psi\colon\R^d\to\R$ bounded, continuously
differentiable, and Lipschitz and $z_j'\in Y'$, is a $\rmC^1_b$ function on $Y$
and, after suitable rescaling, satisfies $\mathrm{Lip}(\phi)\leq 1$.  If two measures 
$\mu,\,\nu$ have vanishing $\sfd_{\rmC^1}$-distance, then necessarily they must coincide
on the $\sigma$-algebra generated by such cylindrical functions.  From
\cite[Lemma at pag.~131-132]{Yosida80}, since $Y$ is separable there
exists a sequence $(z_i')_{i\in \N}$ in the unit ball of $Y'$ with 
\begin{equation}\label{goodyi}
\|y\|_Y=\sup_i\langle y,z_i'\rangle=
\sup_{i,k}\psi_k(\langle y,z_i'\rangle) ,\qquad\text{for all $y\in Y$,}
\end{equation}
where $\psi_k(r)=k\tanh(r/k)$.

Hence, balls and then open sets belong to the $\sigma$-algebra generated by cylindrical functions. 
It follows that this $\sigma$-algebra coincides with the Borel $\sigma$-algebra and $\mu=\nu$.

The proof of the final statement requires a slight refinement of the previous argument.
Let $\mu,\,\nu\in \Probabilitiesone Y$ and for every $a>0$ let us set
$\mathcal L_a\coloneqq
\big\{ \phi\in \Lip_b(Y):\sup_Y|\phi|\le a,\Lip(\phi)\le 1 \big\}$.
We consider the subset $\mathcal G_a$ of functions 
$\phi\in \mathcal L_a$ such that
\begin{displaymath}
  \int_Y \phi \,\de\mu - \int_Y \phi \,\de\nu\le \sfd_{\rmC^1}(\mu,\nu).
\end{displaymath}
By definition of $\sfd_{\rmC^1}$, 
$\mathcal G_a$ contains $\mathcal L_a\cap \rmC^1_b(Y)$;
we want to show that $\mathcal G_a$ coincides with $\mathcal L_a$.
Since $\mathcal G_a$ is closed with respect to pointwise convergence, it is
sufficient to prove that $\tilde {\mathcal G}_a\coloneqq\overline{\mathcal L_a\cap \rmC^1_b(Y)}$ coincides with $\mathcal L_a$. Notice that the
topology of pointwise convergence in $\mathcal L_a$ is metrizable: it
is sufficient to select a countable dense subset $(y_i)_{i\in \N}$ of
$Y$
and consider the distance 
\begin{displaymath}
  \sfd_{\mathcal L_a}(\phi,\psi)\coloneqq\sum_{i=0}^\infty 2^{-i}|\phi(y_i)-\psi(y_i)|.
\end{displaymath}
By approximating the convex function $S_N(r)\coloneqq r_1\lor r_2\lor
\cdots\lor r_N$, $r=(r_1,\ldots,r_N)\in \R^N$ by convolution with
a symmetric, nonnegative mollifier $\kappa\in C^\infty_c(\R^N)$, $\int \kappa=1$,
\begin{displaymath}
  S_{N,h}(r)\coloneqq h^N\int_{\R^N}S_N(r-s)\kappa(h s)\,\de s,\quad h\in \N,
\end{displaymath}
we obtain an increasing sequence of smooth functions converging to
$S_N$ as $h\to\infty$, and satisfying
\begin{displaymath}
  I_N(r)\le S_{N,h}(r)\le S_N(r),\qquad
  |S_{N,h}(r)-S_{N,h}(r')|\le \sup_{1\le n\le N}|r_n-r_n'|,
\end{displaymath}
where $I_N(r)\coloneqq -S_N(-r)=r_1\land r_2\land\cdots\land r_N$.
It follows that if $\psi_1,\ldots,\psi_N\in \tilde {\mathcal G}_a$
then $\psi_h\coloneqq S_{N,h}(\psi_1,\ldots,\psi_N)$ belongs to 
$\tilde {\mathcal G}_a$ and therefore also
$\psi=\psi_1\lor\psi_2\lor\cdots\lor\psi_N$ belongs to $\tilde
{\mathcal G}_a$. The same property holds for the infimum and 
extends to a countable family of functions.

By using the representation \eqref{goodyi} it is then easy to check
that any functions of the form $y\mapsto 
-a\lor ( b+\|y-y_0\|)\land a$ belongs to $\tilde{\mathcal G}_a$.
Since every $f\in \mathcal L_a$ can be expressed as 
$$
f(y)=\inf_{y'\in Y_0} -a\lor (f(y')+\|y-y'\|)\land a  \qquad\forall y\in Y
$$ 
for any dense and countable subset $Y_0$ of $Y$, we conclude.
\end{proof}

From now on we fix a compact metric space $U$ with a distance $\sfd$. 
The space $\Measures(U)$, when endowed with the total variation norm
$$\norm\sigma_{\rm TV}\coloneqq\sup\bigg\{\int_U \varphi\,\de\sigma : \varphi\in
\mathrm C(U), |\varphi|\leq1\bigg\}$$
has the structure of Banach space, isometrically isomorphic to the dual of $\mathrm C(U)$. We will also
use the representation formulas
  \begin{equation}
    \label{eq:11}
    \norm\sigma_{\rm TV}=|\sigma|(U)=\sup\bigg\{\int_U \varphi\,\de\sigma : \varphi\in
    \mathrm B_b(U), |\varphi|\leq1\bigg\},
  \end{equation}
where $\mathrm B_b(U)$ denotes the class of bounded Borel functions $\varphi\colon U\to\mathbb R$.

Mixed strategies can be interpreted as Borel probability measures in $U$.
The set of Borel probability measures $\Probabilities U$ is a convex and weakly$^*$ closed 
subset of $\Measures(U)$, corresponding to the class of nondecreasing linear functionals
$L\colon\mathrm C(U)\to\mathbb R$ with $L(1)=1$ in the dual representation of $\Measures(U)$. On the other hand,
$\Probabilities U$ can also be realized as a compact convex set of another Banach space, the dual $(\mathrm{Lip}(U))'$ of $\mathrm{Lip}(U)$: 
the norm in $\mathrm{Lip}(U)$ is given by
\begin{equation}\label{eq:Lipnorm}
\|\varphi\|_{\mathrm{Lip}}\coloneqq
\sup_{u\in U}|\varphi(u)|+\mathrm{Lip}(\varphi)
\end{equation}
so that, for $\ell\in (\mathrm{Lip}(U))'$,
\begin{equation*}
  \|\ell\|_{\rm BL}\coloneqq\sup \big\{\langle \ell,\varphi\rangle:
  \varphi\in \mathrm{Lip}(U),\ 
  \|\varphi\|_{\Lip}\le 1\big\}.
\end{equation*}
For later use, let us record the property
\begin{equation}\label{mm200bis}
\|\varphi\psi\|_{\mathrm{Lip}}\leq \|\varphi\|_{\mathrm{Lip}}\|\psi\|_{\mathrm{Lip}},
\end{equation}
{\crd  for all $\varphi, \psi \in \mathrm{Lip}(U)$.}

Actually, while the linear structure we need forces us to go beyond $\Probabilities{U}$, for our purposes it will be more convenient to work 
in a closed subspace of $(\mathrm{Lip}(U))'$, namely
$$F(U)\coloneqq\overline{\operatorname{span}(\Probabilities{U})}^{\norm{\cdot}_{\rm BL}}\subset (\mathrm{Lip}(U))'.$$
This space, also called Arens-Eells space in the literature, is a separable Banach space containing 
$\Measures(U)$. 

Notice that, for a measure $\nu\in \Measures_0(U)$, the BL norm is equivalent to the norm induced by the dual 
formulation of the $1$-Wasserstein distance: in fact 
for every $1$-Lipschitz function $\varphi\colon U\to \mathbb R$ and for every $x_0\in U$ we have
\begin{displaymath}
  \int_U \varphi\,\de\nu= \int_U  (\varphi(x)-\varphi(x_0))\,\de\nu(x) \le \|\nu\|_{\rm BL}\Big(\sup_{x\in U}\sfd(x,x_0)+1\Big),
\end{displaymath}
so that, with $D_U\coloneqq\min\limits_{x_0\in U}\max\limits_{x_1\in U}\sfd(x_0,x_1)\le \mathrm{diam}(U)$, one has
$$ 
   \|\nu\|_{\rm BL}\le
    \sup\bigg\{\int_U \varphi\,\de \nu:
    \mathrm{Lip}(\varphi)\le 1\bigg\}\le 
    (1+D_U)\|\nu\|_{\rm BL}.
$$
In particular, when $\nu=\mu_1-\mu_2$ with $\mu_1,\,\mu_2\in \Probabilities{U}$, Kantorovich duality gives
\begin{equation}\label{eq:4}
    \|\mu_1-\mu_2\|_{\rm BL}\le
    W_1(\mu_1,\mu_2)\le (1+D_U)\|\mu_1-\mu_2\|_{\rm BL}.
\end{equation}

We summarize the previous discussion in the following list of properties:
\begin{enumerate}
\item $(\Measures(U),\|\cdot\|_{\rm TV})$ is a Banach space
continuously imbedded in $(F(U),\|\cdot\|_{\rm BL})$ with
\begin{equation}
  \label{eq:3}
  \|\nu\|_{\rm BL}\le \|\nu\|_{\rm TV}{\crd ,}\qquad
  \text{for every }\nu\in \Measures(U).
\end{equation}
\item $\Probabilities U$ is a weakly$^*$ and 
  closed convex set of $\Measures(U)$, endowed with the total variation norm;
  it is also compact in $F(U)$. Thanks to \eqref{eq:4} and to the fact that $W_1$ metrizes the weak$^*$ convergence in
  $\Probabilities U$, the BL norm induces the weak$^*$ topology in $\Probabilities U$.
\item For every $\mu_1,\mu_2\in \Probabilities U$, besides \eqref{eq:4} and \eqref{eq:3} with $\nu=\mu_1-\mu_2$, we have
  \begin{equation}\label{pv520}
    W_1(\mu_1,\mu_2) \le D_U\|\mu_1-\mu_2\|_{\rm TV}.
   \end{equation}
\end{enumerate}

\subsection{Differentiable curves in the space of measures}\label{sec:diffcurves}

Let us now consider two curves $t\mapsto \sigma_t\in \Probabilities U$,
and $t\mapsto \nu_t\in F(U)$, $t\in [0,T]$.
We will assume that $\sigma$ and $\nu$ are continuous with respect to the BL norm
and we want to give a meaning to the differential equation
\begin{equation}
  \label{eq:1}
  \frac\de{\de t}\sigma_t=\nu_t,\qquad t\in [0,T].
\end{equation}
It is easy to check that the classical formulation of \eqref{eq:1} as an ODE in the Banach space 
$F(U)$ is equivalent to the weak formulation of \eqref{eq:1}, that reads as
\begin{equation}
  \label{eq:2}
  \int_U\varphi\,\de\sigma_t-
  \int_U\varphi\,\de\sigma_s=
  \int_s^t \langle \nu_\tau,\varphi\rangle\,\de\tau,\qquad
  \text{for every $\varphi\in \mathrm{Lip}(U)$, $s,\,t\in [0,T]$.}
\end{equation}
Indeed, since $\nu$ is continuous, the map
\begin{equation}\label{eq:5}
  t\mapsto N_t=\int_0^t \nu_\tau\,\de\tau
\end{equation}
is of class $\mathrm C^1$ and its derivative exists in the classical sense
\begin{equation}
  \label{eq:6}
  \lim_{h\to0}\bigg\|\frac1h(N_{t+h}-N_t)-\nu_t\bigg\|_{\rm BL}=0,\qquad\text{for
    every }t\in [0,T].
\end{equation}
From \eqref{eq:2} it follows that 
$$\int_U \varphi\,\de\sigma_t=\int_U\varphi\,\de\sigma_0+
\langle N_t,\varphi\rangle, \qquad\text{for every }\varphi\in \mathrm{Lip}(U)$$
and therefore the density of $\mathrm{Lip}(U)$ in $\mathrm C(U)$ implies $\sigma_t=\sigma_0+N_t$ for al $t\in [0,T]$. 
In particular \eqref{eq:6} gives that $t\mapsto\sigma_t\in F(U)$ is of class $\rmC^1$ 
and that \eqref{eq:1} holds in the classical sense.
\begin{remark}[Vector integral]
  Since
  $\nu$ is continuous,
  the integral in \eqref{eq:5} can be equivalently defined as a Cauchy-Riemann integral
  or as a Bochner integral (see Section~\ref{BI}).
\end{remark}
Let us now suppose that $\nu$ takes its values in the smaller space $\Measures (U)\subset F(U)$ and that
the stronger condition
\begin{displaymath}
  \int_0^T \|\nu_t\|_{\rm TV}\,\de t<+\infty
\end{displaymath}
is satisfied.
Notice that the TV norm is lower semicontinuous with respect to the BL
topology, so that the map $t\mapsto \|\nu_t\|_{\rm TV}$ is lower semicontinuous and
therefore Borel. From the representation formula of $\sigma_t$ it follows that
\begin{equation}
  \label{eq:7}
  \|\sigma_t-\sigma_s\|_{\rm TV}=\|N_t-N_s\|_{\rm TV}\le 
  \int_s^t \|\nu_\tau\|_{\rm TV}\,\de\tau, \qquad\text{for all $0\leq s\leq t\leq T$,}
\end{equation}
so that $t\mapsto \sigma_t\in\Measures(U)$ is absolutely continuous. In particular, if
$\|\nu_t\|_{\rm TV}\in L^\infty(0,T)$, then $\sigma_t$ is a Lipschitz curve. 
Eventually, if $\nu$ is even continuous with respect to the total variation norm, then $t\mapsto \sigma_t\in\Measures(U)$ is of class $\rmC^1$, namely
\begin{equation}
  \label{eq:9}
  \lim_{h\to 0}\bigg\|\frac1h(\sigma_{t+h}-\sigma_t)-\nu_t\bigg\|_{TV}=0,\qquad
  \text{for every $t\in [0,T]$.}
\end{equation}

\subsection{ODE's in the space of measures}
Let us now consider the case when the right hand side $\nu_t$ in \eqref{eq:1} is given by a time dependent family of operators 
$A(t,\sigma)\colon[0,T]\times\Probabilities U\to F(U)$, not necessarily linear with respect to $\sigma$; we assume that $A$ is a continuous map 
when both $\Probabilities U$ and $F(U)$ are endowed with the BL topologies.
Then, we are considering the ODE 
\begin{equation}
  \label{eq:12}
  \frac\de{\de t}\sigma_t=A(t,\sigma_t),\qquad t\in [0,T],
\end{equation}
in $F(U)$. Initially, one can also look for solutions $\sigma\in \rmC([0,T];
(\Probabilities U,\|\cdot\|_{\rm BL}))$ in the weak sense of \eqref{eq:2}, namely
\begin{equation}
  \label{eq:13}
  \int_U\varphi\,\de\sigma_t-
  \int_U\varphi\,\de\sigma_s=
  \int_s^t \langle A(\tau,\sigma_\tau),\varphi\rangle\,\de\tau,\qquad
  \text{for every $\varphi\in \mathrm{Lip}(U), 0\leq s\leq t\leq T$.}
\end{equation}
But, since $A$ is continuous, we deduce from the previous discussion that 
$t\mapsto\sigma_t\in F(U)$ is of class $\rmC^1$, therefore
\eqref{eq:12} holds
in the classical pointwise sense at every $t\in [0,T]$:
\begin{equation}\label{eq:14}
  \lim_{h\to
    0}\bigg\|\frac1h\big(\sigma_{t+h}-\sigma_t\big)-A(t,\sigma_t)\bigg\|_{\rm BL}=0,\qquad
  \text{for all $t\in [0,T]$.}
\end{equation}

If moreover $A$ maps $[0,T]\times \Probabilities U$ to the smaller space
$\Measures(U)$ and we know that
\begin{displaymath}
  \sup_{t\in [0,T]}\|A(t,\sigma_t)\|_{\rm TV}<+\infty
\end{displaymath}
then we deduce that the curve $t\mapsto \sigma_t$ is Lipschitz with
respect to the total variation norm. 
Finally, if $A\colon[0,T]\times \Probabilities U\to\Measures(U)$ is continuous when the target space
is endowed with respect to the total variation norm,
\eqref{eq:14} improves to 
\begin{equation}\label{eq:14bis}
  \lim_{h\to
    0}\bigg\|\frac1h\big(\sigma_{t+h}-\sigma_t\big)-A(t,\sigma_t)\bigg\|_{\rm TV}=0,\qquad
  \text{for all $t\in[0,T]$.}
\end{equation}

\section{Finite agent model}
\subsection{$1$-average-player evolution}

In order to explain how to apply the previous discussion to our model,
let us first consider the simplest, and somehow degenerate, 
case of a single player. Recalling the formal derivation of our model in Section~\ref{formalder},
this case should be considered as a limit of an evolution
process, when all the players (whose total mass is conventionally
normalized to $1$) are initially concentrated in the same initial
place $\bar x$ with the same initial distribution $\bar \sigma$ of strategies.

The evolution is then given by a moving point $y_t=(x_t,\sigma_t)\in \R^d\times \Probabilities U$ and 
$\Sigma_t$ is just the Dirac mass concentrated at $y_t$. Then $y$ satisfies the differential equation
\begin{equation}
  \label{eq:19}
  \left\{
    \begin{aligned}
      \dot x_t&=a(x_t,\sigma_t)=\int_U e(x_t,u)\,\de\sigma_t(u),\\
      \dot \sigma_t&=\bigg(\int_U J(x_t,\cdot,x_t,u')\,\de\sigma_t(u')-
      \int_{U\times U} J(x_t,w,x_t,u')\,\de\sigma_t(u')\,\de\sigma_t(w)\bigg)\sigma_t,
    \end{aligned}
    \right.
\end{equation}
where we denoted by $e\colon \mathbb{R}^d \times U\to \R^d$ the velocity field driving
the motion of the player according to the choice of a strategy $u$. 
This can be interpreted as a differential equation in the phase space
 $C=\R^d\times \Probabilities U\subset Y=\R^d\times F(U)$ of the form
\begin{equation}
  \label{eq:20}
  \dot y_t=b(y_t)
\end{equation}
where $b$ is the vector field
\begin{displaymath}
  b(x,\sigma)=\bigg(\int_U e(x,u)\,\de\sigma(u),
  \Big(\int_U J(x,\cdot,x,u')\,\de\sigma(u')-\int_{U\times U}
  J(x,w,x,u')\,\de\sigma(u')\,\de\sigma(w)\Big)\sigma\bigg).
\end{displaymath}
The $1$-average-player evolution thus reproduces the mechanism of a replicator equation
  influenced by a vector parameter $x$, whose dynamics is in turn
  affected by the evolving strategy distribution.
As we will see in the next more general cases, the particular structure of the vector field $b$ satisfies the structural assumptions 
of the main existence Theorem~\ref{thm:Brezis} and the second component $\sigma_t$ of the curve $y_t$
will be differentiable even with respect to the total variation norm.

\subsection{$N$-{\crd  average-}player system}
In the case of $N$ players, we have to follow the evolution of $N$
points $y_{i,t}=(x_{i,t},\sigma_{i,t})$, $i=1,\ldots, N$.
It is useful to introduce the interaction field $f\colon C\times C\to Y$ 
between two players (as usual, a player is identified by the position $x$ and a
mixed strategy $\sigma$): 
we can write $f$ as a pair $(f_x,f_\sigma)$ where $f_x\colon C\times C\to \R^d$ and
 $f_\sigma\colon C\times C\to F(U)$;
the first component is in fact independent of the interaction and can
be written as
\begin{equation}
  \label{eq:21}
  f_x(y,y')=f_x(x,\sigma,x',\sigma')=a(x,\sigma)=\int_U e(x,u)\,\de\sigma(u),
\end{equation}
whereas the second component $f_\sigma$ is given by
\begin{equation}\label{eq:21bis}
\begin{split}
  f_\sigma(y,y')= & f_\sigma(x,\sigma,x',\sigma') \\
  \coloneqq &
  \Big(\int_U J(x,\cdot,x',u')\,\de\sigma'(u')-\int_{U\times U} J(x,w,x',u')\,\de\sigma'(u')\de\sigma(w)\Big)\sigma.
  \end{split}
\end{equation}
The system
\begin{displaymath}
      \left\{
    \begin{aligned}
      \dot x_{i,t}&=a(x_{i,t},\sigma_{i,t})=\int_U e(x_{i,t},u)\,\de\sigma_{i,t}(u),\\
      \dot \sigma_{i,t}&=
      \bigg(\frac1N\sum_{j=1}^N
      \Big(\int _U J(x_{i,t},\cdot,x_{j,t},u')\,\de\sigma_{j,t}(u')-
      \int_U
      J(x_{i,t},w,x_{j,t},u')\,\de\sigma_{j,t}(u')\,\de\sigma_{i,t}(w)\Big)\bigg)
      \sigma_{i,t},
    \end{aligned}
    \right.
\end{displaymath}
can be rewritten in the compact form
\begin{equation}
  \label{eq:23}
  \dot y_{i,t}=\frac 1N\sum_{j=1}^N f(y_{i,t},y_{j,t}).
\end{equation}
We will see in the next section that such a problem always admits a unique solution, whenever $f$ satisfies the following two conditions:
\begin{enumerate}
\item $f$ is Lipschitz from $C\times C$ to $Y$ when $Y$ is endowed with the product norm 
induced by the Euclidean norm in $\R^d$ and the BL norm in $F(U)$
and the convex subset $C$ is endowed with the distance induced by the inclusion in $\R^d\times F(U)$;
\item for every $R>0$ there exists a constant $\theta>0$ such that 
  for every $x,\,x'\in B_R(0)\subset \R^d$ and $\sigma,\sigma'\in \Probabilities U$ 
  \begin{equation}
    \label{eq:24}
    \sigma+\theta f_\sigma(x,\sigma,x',\sigma')\in \Probabilities U.
  \end{equation}
This last condition is needed to have condition \eqref{eq:16} in Theorem \ref{thm:Brezis} fulfilled.
\end{enumerate}
The above conditions are surely satisfied if the function $J$ is
Lipschitz in $(\R^d\times U)^2$.

Notice that \eqref{eq:23} admits an equivalent formulation by
introducing the time-dependent distribution $\Sigma_{t}^N\coloneqq\frac1N\sum_{i=1}^N\delta_{y_{i,t}}\in \Probabilities X$
and the associated vector field
\begin{equation}
  \label{eq:25}
  b_{\Sigma^N_t}(y)=b_{\Sigma^N}(t,y)\coloneqq\int_Y f(y,y')\,\de\Sigma_t^N(y') =\frac 1N\sum_{i=1}^N\sum_{i=1}^Nf(y,y_{i,t}).
\end{equation}
Such a vector field induces a family $\bY_{\Sigma^N}(t,s,\cdot)\colon C\to C$, for $0\le s<t\le T$, of transition operators (also called flow map) associated to the initial value problem
\begin{equation}
  \label{eq:26}
  \dot y_r=b_{\Sigma^N}(r,y_r),\qquad
  y_s=y,
\end{equation}
namely $\bY_{\Sigma^N}(t,s,y)\coloneqq y_s$.

Therefore \eqref{eq:23} reads as
\begin{equation}
  \label{eq:27}
  y_{i,t}=\bY_{\Sigma^N}(t,s,y_{i,s}),\quad
  \text{so that}\;
  \Sigma_t^N=\bY_{\Sigma^N}(t,s,\cdot)_\#\Sigma_s^N,\qquad
  \text{for every $0\le s\leq t\le T$.}
\end{equation}

We conclude this section by further elaborating the expressions above. By a suitable exchange of integrals, 
we rewrite the following equation with the equivalent notation
\begin{equation*}
\begin{split}
 \dot \sigma_{i,t}=&
      \bigg(\frac1N\sum_{j=1}^N
      \Big(\int_U J(x_{i,t},\cdot,x_{j,t},u')\, {\crd  \mathrm{d}} \sigma_{j,t}(u')-
      \int_{U \times U}
      J(x_{i,t},w,x_{j,t},u')\,\de\sigma_{j,t}(u')\de\sigma_{i,t}(w)\Big)\bigg)
      \sigma_{i,t},\\
      =& 
      \bigg(\int_C
      \Big(\int_{U} J(x_{i,t},\cdot,x',u')\, {\crd  \mathrm{d}}\sigma'(u')-
      \int_{U \times U}
      J(x_{i,t},w,x',u')\, \de\sigma'(u') \de\sigma_{i,t}(w)\Big) {\crd  \mathrm{d}} \Sigma_t^N(x',\sigma')\bigg)
      \sigma_{i,t}  \\
      =&\bigg(  J \ast  \Sigma_t^N(x_{i,t}, \cdot) - \int_U  J\ast \Sigma_t^N(x_{i,t}, w)\,\de\sigma_{i,t}(w) \bigg) \sigma_{i,t},
\end{split}
\end{equation*}
where in the sequel we shall use the compact notation
\begin{equation}\label{astnotation}
J\ast \Lambda(x, u) \coloneqq \int_C \int_U J(x,u,x',u') \,\de\sigma'(u')\de \Lambda(x',\sigma')
\end{equation}
for $\Lambda\in\Probabilities{C}$.

\subsection{Distributed players system: Eulerian and Lagrangian solutions}
The general problem associated to an arbitrary initial distribution of 
players $\bar\Sigma\in \Probabilities C$ 
can be described as follows. Recall that we are endowing $C$ with the distance 
\begin{equation}
  \label{eq:29}
  \sfd_C(y_1,y_2)\coloneqq |x_1-x_2|+\|\sigma_1-\sigma_2\|_{\rm BL}, \qquad
  y_i=(x_i,\sigma_i).
\end{equation}

First of all, in order to take care of the lack of compactness of $\R^d$, the first factor of $C$, 
we assume that the first moment of the first marginal of $\bar\Sigma$ is finite:
\begin{equation}
  \label{eq:22}
  \int _C|x|\,\de\bar\Sigma(x,\sigma)=M_0<+\infty.
\end{equation}
Since $U$ is compact, \eqref{eq:22} holds if and only if $\bar\Sigma\in\Probabilitiesone C$, with
$\Probabilitiesone C$ defined as in \eqref{perone}.

We observe that for every continuous curve 
$t\in [0,T]\mapsto \Sigma_t\in\Probabilitiesone C$ it is possible to define
a time dependent vector field
\begin{equation}\label{pv2}
b_\Sigma(t,y)=b_{\Sigma_t}(y)=\int_C f(y,y')\,\de\Sigma_t(y')
\end{equation}
with {\crd  $f=(f_x,f_\sigma)$} as in \eqref{eq:21}, \eqref{eq:21bis}.
Since $f(y,\cdot)$ is continuous with linear growth and $\Sigma_t\in\Probabilitiesone C$,
the integral above can be interpreted as a Bochner integral, see Section~\ref{BI}.

We can then associate to $b_\Sigma$ the transition maps
$\bY_\Sigma(t,s,y)$ induced by ODE in $Y$
\begin{equation}
  \label{eq:30}
  \dot y_t=b_{\Sigma_t}(y_t),\qquad y_s=y.
\end{equation}
A solution $y=(x,\sigma)$ to \eqref{eq:30} satisfies, with the notation \eqref{astnotation},
\begin{displaymath}
      \left\{
    \begin{aligned}
      \dot x_{t}&=a(x_t,\sigma_t)=\int_U e(x_t,u)\,\de\sigma_{t}(u),\\
      \dot \sigma_{t}&=
      \bigg(\int_C
      \Big(\int_U J(x_{t},\cdot,x',u')\,\de\sigma'(u')-
      \int_{U\times U} J(x_t,w,x',u')\, \de \sigma'(u') \de\sigma_{t}(w)\Big)\,\de \Sigma_t(x',\sigma')\bigg) \sigma_{t},\\
      &= \bigg( J\ast\Sigma_t(x_t,\cdot) - \int_U J\ast \Sigma_t(x_t,w)\,  \de \sigma_t(w)\bigg) \sigma_{t}
    \end{aligned}
    \right.
\end{displaymath}
and the existence of a solution to \eqref{eq:30}  follows again by
Theorem~\ref{thm:Brezis}, see Theorem \ref{prob:mainBanach} under the structural properties assumed in Section \ref{sec:structprop} below.

Whenever we have at our disposal the flow map $\bY_\Sigma$, the
transported measures $\hat\Sigma_t\coloneqq \bY(t,0,\cdot)_\#\bar\Sigma$ solve an infinite-dimensional continuity equation
driven by the vector field $b_{\Sigma_t}$ 
given by \eqref{pv2}, namely (in integral form)
\begin{equation}\label{eq:32}
  \begin{aligned}
    \int_C \phi(t,y)\,\de\hat\Sigma_t(y)&- \int_C
    \phi(0,y)\,\de\hat \Sigma_0(y)\\&= \int_0^t \int_C
    \Big(\partial_s \phi(s,y)+ \int_C\mathrm D\phi(s,y)(f(y,y'))
    \,\de\Sigma_s(y')\Big)\,\de \hat\Sigma_s(y)\de s
  \end{aligned}
\end{equation}
for every $\phi\in\rmC_b^1([0,T]\times Y)$. Indeed, using the change of variables formula
for the push-forward measure, the chain rule, and once more the change of variables formula, one has
\begin{equation*}
\begin{split}
  \frac\de{\de t}\int_C \phi(t,y)\,\de
  \hat\Sigma_t(y)=&\int_C \frac\de{\de t}\phi\bigl(t,\bY_\Sigma(t,0,z)\bigr)\,\de\bar\Sigma (z)\\
  =&\int_C \Big(\partial_t \phi(t,\bY_\Sigma(t,0,z))+
  \mathrm D\phi(t,\bY_\Sigma(t,0,z))(b_{\Sigma_t}(\bY_\Sigma(t,0,z)))\Big)\,\de\bar\Sigma(z)\\ 
  =&\int_C \Big(\partial_t \phi(t,y)+ \mathrm
  D\phi(t,y)(b_{\Sigma_t}(y))\Big)\,\de\hat \Sigma_t(y)\\
=&  \int_C \Big(\partial_t \phi(t,y)+\int_C\mathrm D\phi(t,y)(f(y,y'))
\,\de\Sigma_t(y')\Big)\,\de\Sigma_t(y).
\end{split}
\end{equation*}
Formula \eqref{eq:32} follows now by integration in time.

We look for an evolving distribution $\Sigma$ which is self-transported by the generated vector field
$b_\Sigma$, so that $\hat \Sigma=\Sigma$. These facts motivate the following definition. 

\begin{definition}[Lagrangian and Eulerian solutions]
  \label{def:main}
  Let $\Sigma\in \mathrm{C}^0([0,T];(\Probabilitiesone{C},W_1))$ and $\bar\Sigma\in \Probabilitiesone C$.
  We say that $\Sigma$ is  an Eulerian solution of the initial value problem for the master equation starting from
  $\bar\Sigma$ if 
  $\Sigma_0=\bar\Sigma$ and \eqref{eq:32} holds with $\hat \Sigma=\Sigma$.
    We say that $\Sigma$ is a Lagrangian solution starting from $\bar\Sigma$ if
\begin{equation}\label{pv4}
  \Sigma_t=\PushForward{\bY_\Sigma(t,0,\cdot)}{\bar\Sigma}
  \qquad\text{for every }0\le t\le T,
\end{equation}
where $\bY_\Sigma(t,s,y)$ are the transition maps associated to the ODE \eqref{eq:30}.
\end{definition}

The Lagrangian notion of solution given by the transport identity \eqref{pv4} is rather standard and accepted in the literature 
of multi-agent systems and mean-field equations.  One can see, for instance, the notion of solution given in \cite[Definition 3.3]{CCR}. 
On the other hand, as in fluid mechanics, when looking at the evolution of spatially averaged quantities it is also important to derive an 
alternative Eulerian description in terms of a PDE, in our case \eqref{eq:32}.

We shall first address the problem of existence and uniqueness of Lagrangian solutions. Given that, as we illustrated above,
Lagrangian solutions are Eulerian, this settles the existence problem also for Eulerian solutions. The uniqueness of Eulerian
solutions is technically harder, and it will be dealt with in Section~\ref{sec:euuni}.

\begin{theorem}
  \label{thm:main}
Suppose that $J\colon(\R^d\times U)^2\to \R$ and $e\colon\R^d\times U\to\R$ are Lipschitz maps and let 
$f\colon C\times C\to Y$ be defined as in \eqref{eq:21}, \eqref{eq:21bis}.
Then, for every $\bar\Sigma\in \Probabilitiesone C$, there exists a
unique Lagrangian solution $\Sigma$ and its flow
map $\bY_\Sigma(t,s,y)=\bigl(\xx(t,s,y),\ssigma(t,s,y)\bigr)$ satisfies the additional regularity property that
$\ssigma(\cdot,s,y)$ is of class $\rmC^1$ with values in $(\Measures(U),\|\cdot\|_{TV})$, with
\begin{equation}\label{eq:additionalTV}
{\rm Lip}\bigl(\ssigma(\cdot,s,y),(\Measures(U),\|\cdot\|_{TV})\bigr)\leq L_J\mathrm{diam}(U).
\end{equation}

Moreover, there exists $L\geq 0$ such that for every pair $\bar\Sigma^i$, $i=1,\,2$, of initial data in $\Probabilitiesone C$, 
the corresponding solutions $\Sigma^i_t$ satisfy 
\begin{equation}\label{eq:33}
W_1(\Sigma_t^1,\Sigma_t^2)\le \mathrm e^{Lt}\,W_1(\bar\Sigma^1,\bar\Sigma^2),\qquad \text{for every $t\in [0,T]$.}
\end{equation}
\end{theorem}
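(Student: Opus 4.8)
The plan is to exhibit the Lagrangian solution as the unique fixed point of the self-transport map $\mathcal T\colon\Sigma\mapsto\hat\Sigma$, $\hat\Sigma_t\coloneqq\PushForward{\bY_\Sigma(t,0,\cdot)}{\bar\Sigma}$, acting on curves in $\rmC^0([0,T];(\Probabilitiesone C,W_1))$, since by \eqref{pv4} a Lagrangian solution is precisely such a fixed point. First I would collect the structural facts about $f=(f_x,f_\sigma)$ in \eqref{eq:21}--\eqref{eq:21bis} implied by the Lipschitz hypotheses on $e$ and $J$. The drift $f_x(y)=a(x,\sigma)$ has at most linear growth in $x$, which, together with the sub-tangency property \eqref{eq:24} that forces the replicator term to keep $\sigma_t$ inside $\Probabilities U$, lets Theorem~\ref{thm:Brezis} produce, for each frozen continuous $\Sigma$, a global flow $\bY_\Sigma(t,s,\cdot)\colon C\to C$ solving \eqref{eq:30}. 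A Gr\"onwall bound on $|x_t|$ then shows that $\mathcal T$ keeps first moments finite and uniformly bounded on $[0,T]$, hence maps a closed, moment-bounded subset of $\rmC^0([0,T];(\Probabilitiesone C,W_1))$ into itself.

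Two \emph{uniform} Lipschitz estimates form the backbone. First, $b_{\Sigma}(t,\cdot)$ from \eqref{pv2} is globally Lipschitz on $C$ with a constant $L$ independent of $\Sigma$: the replicator bracket is bounded by $L_J\diam(U)$ through an oscillation estimate, and although $f_x$ grows linearly in $x$, its contribution to $\|b_{\Sigma}(t,y_1)-b_{\Sigma}(t,y_2)\|_Y$ stays controlled because differences of probability measures lie in $\Measures_0(U)$, so the $x$-dependent constant part of the test field $e(x,\cdot)$ can be subtracted without changing the pairing; this is exactly the zero-mass cancellation behind \eqref{eq:4}. Second, for fixed $y$ the map $\Sigma\mapsto b_{\Sigma}(t,y)$ is $W_1$-Lipschitz uniformly in $y$: since $f_x(y,\cdot)$ is independent of $y'$ it integrates to the same value against any probability, so only $f_\sigma$ survives, and Kantorovich duality gives $\|b_{\Sigma^1}(t,y)-b_{\Sigma^2}(t,y)\|_Y\le \Lip\bigl(f_\sigma(y,\cdot)\bigr)\,W_1(\Sigma^1,\Sigma^2)$ with $\Lip\bigl(f_\sigma(y,\cdot)\bigr)\le C L_J$ uniform in $y$, using \eqref{mm200bis} and $\|\sigma\|_{\rm TV}=1$.

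With these estimates I would compare the flows driven by $\Sigma^1,\Sigma^2$ along trajectories $y^i_t=\bY_{\Sigma^i}(t,0,z^i)$, splitting $b_{\Sigma^1_s}(y^1_s)-b_{\Sigma^2_s}(y^2_s)$ into $\bigl(b_{\Sigma^1_s}(y^1_s)-b_{\Sigma^1_s}(y^2_s)\bigr)+\bigl(b_{\Sigma^1_s}(y^2_s)-b_{\Sigma^2_s}(y^2_s)\bigr)$; the first estimate bounds the first summand by $L\|y^1_s-y^2_s\|_Y$ and the second bounds the second by $C\,W_1(\Sigma^1_s,\Sigma^2_s)$, so Gr\"onwall yields $\|y^1_t-y^2_t\|_Y\le \mathrm e^{Lt}\bigl(\|z^1-z^2\|_Y+\int_0^t C\,W_1(\Sigma^1_s,\Sigma^2_s)\,\de s\bigr)$. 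Taking $\Sigma^1,\Sigma^2$ to be two solutions and integrating against an optimal plan $\Pi$ between $\bar\Sigma^1,\bar\Sigma^2$ (so the pushforward is a coupling of $\Sigma^1_t,\Sigma^2_t$ and $\int\|z^1-z^2\|_Y\,\de\Pi=W_1(\bar\Sigma^1,\bar\Sigma^2)$) turns this into a closed integral inequality whose Gr\"onwall resolution is \eqref{eq:33}; the case $\bar\Sigma^1=\bar\Sigma^2$ gives uniqueness. For existence, freezing the common initial datum $\bar\Sigma$ and taking $z^1=z^2$ drops the first term and gives $W_1(\hat\Sigma^1_t,\hat\Sigma^2_t)\le C\int_0^t W_1(\Sigma^1_s,\Sigma^2_s)\,\de s$, so the weighted metric $\sup_t\mathrm e^{-\lambda t}W_1(\cdot,\cdot)$ makes $\mathcal T$ a contraction for $\lambda$ large and Banach's theorem produces the fixed point.

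For the extra regularity I would read off from \eqref{eq:30} that the strategy component solves $\dot\sigma_t=g_t\,\sigma_t$ with $g_t(u)=J\ast\Sigma_t(x_t,u)-\int_U J\ast\Sigma_t(x_t,w)\,\de\sigma_t(w)$; since $u\mapsto J\ast\Sigma_t(x_t,u)$ is $L_J$-Lipschitz and $g_t$ is its deviation from its own $\sigma_t$-average, the oscillation bound gives $\sup_U|g_t|\le L_J\diam(U)$. Hence $\dot\sigma_t=g_t\sigma_t\in\Measures(U)$ with $\|\dot\sigma_t\|_{\rm TV}\le\sup_U|g_t|\le L_J\diam(U)$, and as $t\mapsto g_t\sigma_t$ is continuous in total variation the discussion culminating in \eqref{eq:14bis} upgrades $\ssigma(\cdot,s,y)$ to a $\rmC^1$ curve valued in $(\Measures(U),\|\cdot\|_{\rm TV})$ satisfying \eqref{eq:additionalTV}. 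The step I expect to be most delicate is securing the two uniform Lipschitz estimates of the second paragraph: the unbounded $\R^d$-factor would otherwise make the Gr\"onwall constant depend on $x$ and blow up, and it is exactly the $\Measures_0(U)$ cancellation for the position drift, together with the oscillation boundedness of the replicator bracket, that keeps $L$ finite and independent of both $\Sigma$ and $x$, after which the contraction and stability steps become routine.
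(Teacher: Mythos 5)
Your proposal is correct and takes essentially the same route as the paper: existence and uniqueness of the frozen-$\Sigma$ flow via Brezis' Theorem~\ref{thm:Brezis} (resting on the Lipschitz and sub-tangency properties of $f$ from Proposition~\ref{pv512}), a Banach fixed point for the self-transport map $\mathcal T$ under a weighted sup-metric, the stability estimate \eqref{eq:33} obtained by integrating the flow-comparison Gr\"onwall bound against an optimal coupling, and the extra regularity \eqref{eq:additionalTV} from the oscillation bound $\|(b_{\Sigma_t})_\sigma\|_{\rm TV}\le L_J\diam(U)$ combined with the $\rmC^1$-in-TV criterion of Section~\ref{sec:diffcurves}. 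The only cosmetic differences are that the paper routes the argument through the abstract interaction framework of Theorem~\ref{prob:mainBanach} rather than the concrete $f$, and proves stability with couplings at each time $s$ plus a differential inequality rather than your single coupling at $t=0$ plus an integral Gr\"onwall inequality; both are equivalent.
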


The proof of Theorem~\ref{thm:main} will be given in Section \ref{sec:existuniq}: it does not depend on the particular structure of $f$ and $J$, but relies
on {\crd  their Lipschitz property, the convexity of $C$, and} the Banach framework. Notice that \eqref{eq:additionalTV} comes immediately
from \eqref{eq:21bis} and the definition of the $F(U)$-component $(b_{\Sigma_t})_\sigma(y)=\int_Y f_\sigma(y,y')\,\de\Sigma_t(y')$ of $b_{\Sigma_t}$ by using the estimate 
$$
\int_U\biggl|\int_U J(x,u,x',u')\,\de\sigma'(u')-\int_{U\times U} J(x,w,x',u')\,\de\sigma'(u')\de\sigma(w)\biggr|\,\de\sigma(u)\leq L_J\mathrm{diam}(U)
$$
which, thanks to the uniformity {\crd  with respect to} $y'=(x',\sigma')$, gives that $\| (b_{\Sigma_t})_\sigma\|_{TV}\leq L_J\mathrm{diam}(U)$.
Then, thanks to the discussion in Section~\ref{sec:diffcurves}, we obtain also $\mathrm C^1$ regularity with respect to the total variation norm.

\subsection[Structural properties of the interaction term]{Structural properties of the interaction term $f$}\label{sec:structprop}

Recall that $C=\mathbb{R}^d\times\Probabilities{U}$ is a closed and convex subset of $Y=\mathbb{R}^d\times F(U)$.
We shall denote in the sequel by $L_e$ and $L_J$ the Lipschitz constants of $e$ and $J$ respectively.

\begin{remark}\label{L5}
We endow $C\times C$ with the distance induced by the norm in
$Y\times Y$
\begin{displaymath}
  \|(y_1,y_2)\|\coloneqq\|y_1\|_{\rm BL}+\|y_2\|_{\rm BL}.
\end{displaymath}
With this choice, a function is $L$-Lipschitz if (and only if)
it is $L$-Lipschitz separately in the components, that is, if for every $y,\,y_1,\,y_2\in C$ one has
\begin{equation}\label{L10}
\norm{f(y_1,y)-f(y_2,y)}\leq L\norm{y_1-y_2}_{\rm BL},\qquad \norm{f(y,y_1)-f(y,y_2)}\leq L\norm{y_1-y_2}_{\rm BL}
\end{equation}
then
\begin{equation}\label{eq:37}
  \|f(y_1,y_2)-f(y_1',y_2')\|\le L\big(\|y_1-y_1'\|_{\rm BL}+\|y_2-y_2'\|_{\rm BL}\big)=L\|(y_1,y_2)-(y_1',y_2')\|.
\end{equation}
\end{remark}

For $y=(x,\sigma)\in C$ and $\Sigma\in\Probabilitiesone{C}$, let $b_\Sigma(y)=\int_U f(y,y')\,\de\Sigma(y')$ as in \eqref{pv2}, where $f\colon C\times C\to Y$ is defined by
\begin{equation}\label{pv511}
f(y,y')=\bigg(a(y),\Big( \int_U J(x,\cdot, x',u')\,\de\sigma'(u')-\int_{U\times U}\ J(x,w,x',u')\,\de\sigma'(u')\de\sigma(w)\Big)\sigma\bigg),
\end{equation}
with $a(y)=a(x,\sigma)=\int_U e(x,u)\,\de\sigma(u)$ as in \eqref{mm010}.
We define a map $j\colon\mathbb R^d\times\mathbb R^d\times\Probabilities{U}\to\Lip(U)$ by
\begin{equation}\label{pv518}
(x,x',\sigma')\mapsto j(x,x',\sigma')(u)\coloneqq\int_U J(x,u,x',u')\,\de\sigma'(u').
\end{equation}
Notice that the map $j(x,x',\sigma')$ depends linearly on $\sigma'$.
Moreover, recalling that $\sigma'$ is a probability measure, it is not difficult to see that 
\begin{equation}\label{pv519}
\norm{j(x,x',\sigma')}_{\Lip(U)}=\norm{j(x,x',\sigma')}_{L^\infty(U)}+\Lip(j(x,x',\sigma'))\leq
\max|J(x,\cdot,x',\cdot)|+L_J\diam(U).
\end{equation}

\begin{lemma}\label{pv516}
Let $\sigma\in\Probabilities{U}$ and let $z\in\Lip(U)$.
Then the following estimate holds
\begin{equation}\label{pv517}
\norm{z\sigma}_{\rm BL}\leq \norm{z}_{\Lip} \norm{\sigma}_{\rm BL}.
\end{equation}
\end{lemma}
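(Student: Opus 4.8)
The plan is to unfold the dual definition of the bounded-Lipschitz norm and transfer the multiplication by $z$ from the measure onto the test function. Recalling \eqref{mm200}, the object $z\sigma$ is the (signed) element of $(\Lip(U))'$ acting by $\varphi\mapsto\int_U \varphi\,\de(z\sigma)=\int_U \varphi z\,\de\sigma$, so that
\[
\norm{z\sigma}_{\rm BL}=\sup\Big\{\langle\sigma,\varphi z\rangle:\varphi\in\Lip(U),\ \norm{\varphi}_{\Lip}\le 1\Big\}.
\]
Thus the whole estimate reduces to controlling, uniformly over test functions $\varphi$ with $\norm{\varphi}_{\Lip}\le 1$, the pairing $\langle\sigma,\varphi z\rangle$ in terms of $\norm{\sigma}_{\rm BL}$.

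The key ingredient is the submultiplicativity of the Lipschitz norm recorded in \eqref{mm200bis}: for $\norm{\varphi}_{\Lip}\le 1$ one has $\norm{\varphi z}_{\Lip}\le\norm{\varphi}_{\Lip}\norm{z}_{\Lip}\le\norm{z}_{\Lip}$, so that, up to the scalar factor $\norm{z}_{\Lip}$, the product $\varphi z$ is an admissible competitor in the dual definition of $\norm{\sigma}_{\rm BL}$. Concretely, assuming $\norm{z}_{\Lip}>0$ and setting $\psi\coloneqq\varphi z/\norm{z}_{\Lip}$, one gets $\norm{\psi}_{\Lip}\le 1$, whence $\langle\sigma,\psi\rangle\le\norm{\sigma}_{\rm BL}$ and therefore $\langle\sigma,\varphi z\rangle\le\norm{z}_{\Lip}\norm{\sigma}_{\rm BL}$. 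Taking the supremum over all admissible $\varphi$ yields \eqref{pv517}.

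There is essentially no serious obstacle here: the statement is a direct consequence of the Banach-algebra property \eqref{mm200bis} of $(\Lip(U),\norm{\cdot}_{\Lip})$ combined with duality. The only points requiring a modicum of care are, first, the identification of $z\sigma$ as an element of $(\Lip(U))'$ — one checks that $\varphi z\in\Lip(U)$, which is guaranteed precisely by \eqref{mm200bis} since both factors are Lipschitz, so the pairing is well defined — and, second, the harmless rescaling, which is why one includes the degenerate case $\norm{z}_{\Lip}=0$: this forces $z\equiv 0$ and the inequality becomes $0\le 0$.
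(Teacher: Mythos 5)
Your proof is correct and follows essentially the same route as the paper's: both unfold the dual definition \eqref{mm200} of the BL norm so that the multiplication by $z$ lands on the test function, and both invoke the submultiplicativity \eqref{mm200bis} to see that the product is (after rescaling) an admissible competitor, yielding $\langle\sigma,\varphi z\rangle\leq\norm{z}_{\Lip}\norm{\sigma}_{\rm BL}$. The paper compresses this into a single display, while you spell out the rescaling and the degenerate case $\norm{z}_{\Lip}=0$; the content is identical.
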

\begin{proof}
It follows directly by \eqref{mm200bis} and by the definition \eqref{mm200} of the BL norm:
\begin{equation*}
\norm{z\sigma}_{\rm BL}=\sup_{\eta:\norm\eta_{\Lip}\leq1} \int_U \eta(u)z(u)\,\de\sigma(u)\leq\norm{z}_{\Lip}\norm{\sigma}_{\rm BL}. \qedhere
\end{equation*}
\end{proof}

\begin{proposition}\label{pv512}
The map $f$ defined in \eqref{pv511} is $L$-Lipschitz (in both variables), with $L$ depending only on
$L_e$, $L_J${\crd , and} $\mathrm{diam}(U)$, and satisfies the compatibility condition
{\crd  (see also \eqref{eq:34} below)
\begin{equation}
  \label{eq:34x}
  \forall\,R>0\ \exists\,\theta>0:\quad
  y,\,y'\in C\cap B_R(0)\quad\Rightarrow\quad
  y\pm\theta f(y,y')\in C.
\end{equation} }
\end{proposition}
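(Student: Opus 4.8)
The plan is to verify the two separate Lipschitz inequalities \eqref{L10}, which by Remark~\ref{L5} already yield the joint $L$-Lipschitz bound \eqref{eq:37}, treating the velocity component $f_x$ and the replicator component $f_\sigma$ of $f$ independently. The single observation that drives every estimate is that a difference of probabilities $\sigma_1-\sigma_2$ lies in $\Measures_0(U)$, so that in any pairing $\int_U\varphi\,\de(\sigma_1-\sigma_2)$ one may subtract from $\varphi$ its value $\varphi(\bar u)$ at a fixed point $\bar u\in U$; since $\diam(U)<\infty$, this replaces suprema of $e$ and $J$ by their oscillations and explains why the final constant depends only on $L_e$, $L_J$ and $\diam(U)$, with no growth in $R$. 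For the velocity component $f_x(y,y')=a(x,\sigma)$, which does not depend on $y'$, I would split $a(x_1,\sigma_1)-a(x_2,\sigma_2)$ into $\int_U(e(x_1,u)-e(x_2,u))\,\de\sigma_1(u)$, bounded by $L_e|x_1-x_2|$, and $\int_U e(x_2,\cdot)\,\de(\sigma_1-\sigma_2)$, which after subtracting $e(x_2,\bar u)$ is bounded by $L_e(1+\diam(U))\|\sigma_1-\sigma_2\|_{\rm BL}$.

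For the replicator component I would first record the rewriting $f_\sigma(y,y')=(g-\langle\sigma,g\rangle)\,\sigma$ with $g\coloneqq j(x,x',\sigma')$ as in \eqref{pv518}, and note that $g-\langle\sigma,g\rangle$ is unchanged if $g$ is replaced by $\tilde g\coloneqq g-g(\bar u)$; the advantage is that $\tilde g$ obeys the uniform bounds $\|\tilde g\|_{L^\infty}\le L_J\diam(U)$ and $\Lip(\tilde g)\le L_J$. To estimate Lipschitz continuity in the first argument I would set $p_i\coloneqq\tilde g_i-\langle\sigma_i,\tilde g_i\rangle$ (with $\tilde g_i$ built from $x_i$) and telescope $p_1\sigma_1-p_2\sigma_2=p_1(\sigma_1-\sigma_2)+(p_1-p_2)\sigma_2$, treating the first summand, as in Lemma~\ref{pv516}, by $\|p_1\|_{\Lip}\|\sigma_1-\sigma_2\|_{\rm BL}$.

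The main obstacle is the second summand $(p_1-p_2)\sigma_2$: here $\Lip(p_1-p_2)$ need not vanish as $x_1\to x_2$, because the Lipschitz constant in $u$ of $J(x_1,u,\cdot)-J(x_2,u,\cdot)$ stays of order $L_J$ even when its oscillation is small, so estimating through the full $\Lip$-norm as in Lemma~\ref{pv516} would fail. The way out is to use that $\sigma_2\in\Probabilities{U}$ and hence, by \eqref{eq:3}, $\|(p_1-p_2)\sigma_2\|_{\rm BL}\le\|(p_1-p_2)\sigma_2\|_{\rm TV}\le\|p_1-p_2\|_{L^\infty}$; only the sup-norm of $p_1-p_2$ survives, and that is controlled by $\|\tilde g_1-\tilde g_2\|_{L^\infty}\le 2L_J|x_1-x_2|$ together with the zero-mean estimate of $\langle\sigma_1,\tilde g_1\rangle-\langle\sigma_2,\tilde g_2\rangle$. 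The same pattern, namely the sup-norm of the multiplier paired against the fixed probability $\sigma$ combined with the zero-mean trick now applied in $\sigma'$, gives Lipschitz continuity in the second argument $y'$, where the multiplier difference is again paired only with $\sigma\in\Probabilities{U}$.

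Finally, for the compatibility condition \eqref{eq:34x} I would reuse the factorization $f_\sigma(y,y')=\Delta\,\sigma$ with $\Delta\coloneqq g-\langle\sigma,g\rangle\in\Lip(U)$, so that $\sigma\pm\theta f_\sigma=(1\pm\theta\Delta)\sigma$. Its total mass equals $1\pm\theta\langle\sigma,\Delta\rangle=1$ since $\langle\sigma,\Delta\rangle=0$ by construction, so mass is preserved for every $\theta$, and the only requirement is nonnegativity, i.e. $1\pm\theta\Delta\ge0$ on $U$, which holds whenever $\theta\,\|\Delta\|_{L^\infty}\le1$. As $\|\Delta\|_{L^\infty}\le2L_J\diam(U)$ uniformly and the first component $x\pm\theta f_x$ always lies in $\R^d$, one obtains $y\pm\theta f(y,y')\in C$ with a $\theta$ that can even be chosen independently of $R$, which is more than \eqref{eq:34x} demands.
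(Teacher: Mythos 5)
Your proof is correct, and it shares the paper's overall skeleton: reduction to separate Lipschitz estimates via Remark~\ref{L5}, the velocity estimate for $f_x$ via the zero-mean trick, product estimates in the spirit of Lemma~\ref{pv516}, and, for \eqref{eq:34x}, the factorization $\sigma\pm\theta f_\sigma=(1\pm\theta\Delta)\sigma$ with conservation of mass and nonnegativity for $\theta$ of order $(L_J\diam U)^{-1}$ (the paper writes out only the $+$ sign, but the argument is two-sided, and there too $\theta$ is independent of $R$). The genuine difference is in how you treat the terms where a \emph{difference of multipliers} acts on a \emph{fixed} probability measure, and there your route is not merely different: it is the one that actually works. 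The paper estimates such terms through Lemma~\ref{pv516}, which forces the bound \eqref{pv526}, i.e. $\|j(x,x',\sigma_1')-j(x,x',\sigma_2')\|_{\Lip}\le L_J(1+\diam U)\|\sigma_1'-\sigma_2'\|_{\rm BL}$; but, exactly as your ``main obstacle'' paragraph anticipates, only the sup-norm part of this $\Lip$ norm is controlled by $\|\sigma_1'-\sigma_2'\|_{\rm BL}$, while the Lipschitz-seminorm part remains of order $L_J$. Concretely, for $U=[0,1]$, $J(x,u,x',u')=|u-u'|$, $\sigma_1'=\delta_a$, $\sigma_2'=\delta_b$, the function $z=j_1-j_2=|\cdot-a|-|\cdot-b|$ has $\Lip(z)=2$, whereas $\|\delta_a-\delta_b\|_{\rm BL}\le|a-b|$ is arbitrarily small, so \eqref{pv526} fails as stated; the same obstruction is hidden in the ``easy'' estimates \eqref{pv528}--\eqref{pv529}. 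Your detour through \eqref{eq:3}, namely $\|(p_1-p_2)\sigma\|_{\rm BL}\le\|(p_1-p_2)\sigma\|_{\rm TV}\le\|p_1-p_2\|_{L^\infty}$, needs only the sup norm of the multiplier difference, which the zero-mean trick does control; this repairs the argument and still produces a constant depending only on $L_J$, $L_e$, $\diam(U)$, so the proposition and everything downstream are unaffected. One small caveat: when you bound $\|p_1(\sigma_1-\sigma_2)\|_{\rm BL}\le\|p_1\|_{\Lip}\|\sigma_1-\sigma_2\|_{\rm BL}$ you are applying Lemma~\ref{pv516} to a signed measure rather than a probability, but its proof (duality plus the algebra inequality \eqref{mm200bis}) extends verbatim to $\Measures_0(U)$, and this use is legitimate precisely because there the multiplier $p_1$ is fixed, with $\|p_1\|_{\Lip}\le L_J(1+2\diam U)$.
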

\begin{proof}
By Remark \ref{L5}, we can study the Lipschitz dependence of $f$ separately with respect to $y$ and $y'$.
Moreover, we can consider the Lipschitz dependence on $x$ and $\sigma$ separately, keeping the other variable frozen.
Let us start with $f_x$; since it does not depend on $y'$, we only study the Lipschitz dependence on $y$.
We have
\begin{equation}\label{pv515}
\begin{split}
|f_x  (y_1, & y') -{}  f_x(y_2,y')| = |a(y_1)-a(y_2)| = \bigg|\int_U e(x_1,u)\,\de\sigma_1(u)-\int_U e(x_2,u)\,\de\sigma_2(u)\bigg| \\
\leq{} & \bigg|\int_U e(x_1,u)\,\de\sigma_1(u)-\int_U e(x_1,u)\,\de\sigma_2(u)\bigg| +\bigg|\int_U e(x_1,u)\,\de\sigma_2(u)-\int_U e(x_2,u)\,\de\sigma_2(u)\bigg| \\
\leq & L_e(1+\diam U)\norm{\sigma_1-\sigma_2}_{\rm BL}+L_e|x_1-x_2|\leq L_e(1+\diam U)\norm{y_1-y_2},
\end{split}
\end{equation}
where we have used \eqref{pv520}.
To study the Lipschitz dependence of $f_\sigma$ on its variables, it is convenient to do it for $x$, $x'$, $\sigma$, and $\sigma'$ separately.
The Lipschitz dependence on $x$ is easy to obtain, and it leads to
\begin{equation}\label{pv528}
\norm{f_\sigma(x_1,\sigma,y')-f_\sigma(x_2,\sigma,y')}_{\rm BL}\leq 2L_J(1+\diam U)|x_1-x_2|.
\end{equation}
Similarly, one can prove that
\begin{equation}\label{pv529}
\norm{f_\sigma(y,x_1',\sigma')-f_\sigma(y,x_2',\sigma')}_{\rm BL}\leq 2L_J(1+\diam U)|x_1'-x_2'|.
\end{equation}
Let us consider the dependence on $\sigma$.
Using the map $j(x,x',\sigma')$ defined in \eqref{pv518}, we have to estimate
\begin{equation}\label{pv521}
\begin{split}
\norm{f_\sigma(x,\sigma_1,y')-{} & f_\sigma(x,\sigma_2,y')}_{BL} =
\bigg\lVert \sigma_1\Big(j(x,x',\sigma')(\cdot)-\int_U j(x,x',\sigma')(v)\,\de\sigma_1(v)\Big) \\
& -\sigma_2\Big(j(x,x',\sigma')(\cdot)-\int_U j(x,x',\sigma')(v)\,\de\sigma_2(v)\Big)\bigg\rVert_{\rm BL} \\
\leq{} & \bigg\lVert (\sigma_1-\sigma_2) \Big(j(x,x',\sigma')(\cdot)-\int_U j(x,x',\sigma')(v)\,\de\sigma_2(v)\Big)\bigg\rVert_{\rm BL} \\
& + \bigg\lVert \sigma_1 \Big( \int_U j(x,x',\sigma')(v)\,\de(\sigma_2-\sigma_1)(v)\Big)\bigg\rVert_{\rm BL} \eqqcolon I+II.
\end{split}
\end{equation}
Term $I$ above can be estimated as follows
\begin{equation}\label{pv522}
\begin{split}
I={} & \bigg\lVert (\sigma_1-\sigma_2) \Big(\int_U
\big(j(x,x',\sigma')(\cdot)-j(x,x',\sigma')(v)\big)\,\de\sigma_2(v)\Big)\bigg\rVert_{\rm BL} \\
\leq{} & L_J(1+\diam U)\norm{\sigma_1-\sigma_2}_{\rm BL}.
\end{split}
\end{equation}
To estimate $II$, we use the definition of BL norm and the fact that $\sigma_1-\sigma_2\in\Measureso{U}$
\begin{equation}\label{pv523}
\begin{split}
II ={} & \sup_{\eta:\norm{\eta}_{\Lip}\leq1} \int_U \eta(u)\Big( \int_U j(x,x',\sigma')(v)\,\de(\sigma_2-\sigma_1)(v)\Big)\,\de\sigma_1(u) \\
\leq{} & \bigg|\int_U j(x,x',\sigma')(v)\,\de(\sigma_2-\sigma_1)(v)\bigg| \\
\leq{} & \int_U |j(x,x',\sigma')(v)-j(x,x',\sigma')(u)|\,\de(\sigma_2-\sigma_1)(v) \\
\leq{} & L_J\diam U\norm{\sigma_1-\sigma_2}_{\rm BL}.
\end{split}
\end{equation}
Putting \eqref{pv522} and \eqref{pv523} together, we can complete the estimate for \eqref{pv521} and obtain
\begin{equation}\label{pv524}
\norm{f_\sigma(x,\sigma_1,y')-f_\sigma(x,\sigma_2,y')}_{\rm BL}\leq 2L_J(1+\diam U)\norm{\sigma_1-\sigma_2}_{\rm BL}.
\end{equation}
Using \eqref{pv517} and that $\norm{\sigma}_{\rm BL}\leq1$, let us now estimate 
\begin{equation}\label{pv525}
\begin{split}
\norm{f_\sigma(y,x',\sigma_1')-{} & f_\sigma(y,x',\sigma_2')}_{\rm BL}
= \bigg\lVert \sigma\Big(j(x,x',\sigma_1')(\cdot)-
\int_U j(x,x',\sigma_1')(v)\,\de\sigma(v)\Big) \\
& -\sigma\Big(j(x,x',\sigma_2')(\cdot)-\int_U j(x,x',\sigma_2')(v)\,\de\sigma(v)\Big) \bigg\rVert_{\rm BL} \\
\leq{} & \norm{j(x,x',\sigma_1')-j(x,x',\sigma_2')}_{\Lip} \\
& + \bigg\lVert\int_U \big(j(x,x',\sigma_1')(v)-j(x,x',\sigma_2')(v)\big)\,\de\sigma(v)\bigg\rVert_{\Lip} \eqqcolon I'+II'.
\end{split}
\end{equation}
To estimate $I'$, we use the definition \eqref{pv518} of $j(x,x',\sigma')$ and the fact that $\sigma_1'-\sigma_2'\in\Measureso{U}$ to obtain
\begin{equation}\label{pv526}
\begin{split}
I' ={} & \bigg\lVert \int_U J(x,\cdot,x',u')\,\de\sigma_1'(u')-
\int_U J(x,\cdot,x',u')\,\de\sigma_2'(u') \bigg\rVert_{\Lip} \\
={} & \bigg\lVert \int_U J(x,\cdot,x',u')\,\de(\sigma_1'-\sigma_2')(u') \bigg\rVert_{\Lip} \\
={} & \bigg\lVert \int_U \big(J(x,\cdot,x',u')-J(x,\cdot,x',v')\big)\,\de(\sigma_1'-\sigma_2')(u') \bigg\rVert_{\Lip} \\
\leq{} & L_J(1+\diam U)\norm{\sigma_1'-\sigma_2'}_{\rm BL}.
\end{split}
\end{equation}
The estimate of $II'$ follows in a similar way, so that we obtain
\begin{equation}\label{pv527}
\norm{f_\sigma(y,x',\sigma_1')- f_\sigma(y,x',\sigma_2')}_{\rm BL}\leq 2L_J(1+\diam U)\norm{\sigma_1'-\sigma_2'}_{\rm BL}.
\end{equation}
Putting \eqref{pv528}, \eqref{pv529}, \eqref{pv524}, and \eqref{pv527} together, we obtain
\begin{equation}\label{530}
\norm{f_\sigma(y_1,y_1')-f_\sigma(y_2,y_2')}_{\rm BL}\leq 2L_J(1+\diam U)\big(\norm{y_1-y_2}+\norm{y_1'-y_2'}\big),
\end{equation}
which, together with \eqref{pv515} gives the Lipschitz estimate on $f$.\\
Let us now discuss the compatibility conditions \eqref{eq:34} for the $f$ defined in \eqref{pv511}. 
It is clear that the first component of $y+\theta f(y,y')$, namely $x+\theta a(y)$, belongs to $\mathbb{R}^d$ for all $\theta\in\mathbb{R}$, so that we are left with checking that the second component $\sigma+\theta f_\sigma(y,y')$, namely 
\begin{equation}\label{pv514}
\sigma+\theta\sigma\Big( \int_U J(x,\cdot,x',u')\,\de\sigma'(u')-\int_{U\times U} J(x,u,x',u')\,\de\sigma'(u')\de\sigma(u)\Big),
\end{equation}
belongs to $F(U)$.
As a matter of fact, we will prove that \eqref{pv514} is an element of $\Probabilities{U}$, which means that its integral over $U$ is $1$ and that it is positive.
The proof that $\sigma+\theta f_\sigma(y,y')\geq 0$ can be obtained via some manipulations and using the Lipschitz estimate on $J$.
Indeed,
\begin{equation*}
\begin{split}
\sigma+\theta f_\sigma(y,y') & = \sigma\bigg(1+\theta\Big( \int_U
J(x,u, x',u')\,\de\sigma'(u')-\int_{U\times U} J(x,w,x',u')\,\de\sigma'(u')\de\sigma(w)\Big)\bigg) \\
&= \sigma\bigg(1+\theta \int_U \Big(J(x,u,x',u')-\int_U J(x,w,x',u')\,\de\sigma(w)\Big)\,\de\sigma'(u')\bigg) \\
&= \sigma\bigg(1+\theta \int_{U\times U} (J(x,u,x',u')-J(x,w,x',u'))\,\de\sigma(w)\de\sigma'(u')\bigg) \\
&\geq \sigma\bigg(1-\theta L_J\int_U d_U(u,w)\,\de\sigma'(u')\bigg) \geq \sigma(1-\theta L_J\diam U),\\
\end{split}
\end{equation*}
which is nonnegative as soon as $\theta\leq (L_J\diam U)^{-1}$.
By recalling that $f_\sigma(y,y')\in\Measureso{U}$, we obtain that $\sigma+\theta f_\sigma(y,y')\in\Probabilities{U}$.
\end{proof}

\section{Existence and uniqueness of Lagrangian solutions}\label{sec:existuniq}

\subsection{Interaction systems in Banach spaces}\label{subsecint}

Let us consider now a Banach space $(Y,\|\cdot\|)$ with a closed
convex set $C$ and a $L$-Lipschitz map
\begin{equation}
  \label{eq:28}
  f\colon C\times C\to Y
\end{equation}
satisfying the compatibility condition
\begin{equation}
  \label{eq:34}
  \forall\,R>0\ \exists\,\theta>0:\quad
  c,\,c'\in C\cap B_R(0)\quad\Rightarrow\quad
  c+\theta f(c,c')\in C.
\end{equation}
Let us consider a continuous curve of measures
$\Sigma\in \mathrm{C}([0,T];(\Probabilitiesone{C},W_1))$.
Recalling that 
\begin{equation}\label{pv500}
  \|f(c,c')\|\le \norm{f(c_0,c_0)}+L(\|c-c_0\|+\|c'-c_0\|)
\end{equation}
where $c_0$ is an arbitrary point in $C$, 
we can define the time-dependent vector field $b_\Sigma(t,\cdot)\colon C\to Y$ by
\begin{equation}\label{pv2bis}
  b_\Sigma(t,c)=b_{\Sigma_t}(c)\coloneqq\int_C f(c,c')\,\de\Sigma_t(c'),
\end{equation}
where the integral above can be interpreted in the strong sense, as a Bochner integral.

We are going to prove the following result, which provides (taking Proposition~\ref{pv512} into account)
 the proof of Theorem~\ref{thm:main}.

\begin{theorem}
  \label{prob:mainBanach}
  Given $\bar\Sigma\in \Probabilitiesone C$ 
  there exists $\Sigma\in \mathrm{C}^0([0,T];(\Probabilitiesone{C},W_1))$ with $\Sigma_0=\bar\Sigma$ such that the
   family of transition maps {\crd  $\bY_\Sigma(t,s,\cdot)$} in $C$
  associated to the ODE
  \begin{equation}
    \label{eq:36}
    \dot y_t=b_\Sigma(t,y_t),\quad y_s=y,\quad y\in 
    \rmC^1([s,T];Y),\ y_t\in
    C\quad\text{for every }t\in [s,T]
  \end{equation}
 with the vector field $b_\Sigma$ given by \eqref{pv2bis} satisfies
\begin{equation}\label{pv4bis}
  \Sigma_t=\PushForward{\bY_\Sigma(t,s,\cdot)}{\Sigma_s}
  \qquad\text{for every $0\le s\leq t\le T$.}
\end{equation}
In addition, one has the stability estimate
\begin{equation}
\label{eq:stability}
W_1(\Sigma_t,\Sigma_t')\leq\mathrm e^{2L(t-s)}W_1(\Sigma_s,\Sigma_s'),\qquad\text{for every $0\leq s\leq t\leq T$},
\end{equation}
for the solutions $\Sigma,\,\Sigma'$ starting from $\bar\Sigma$, $\bar\Sigma'$, where $L$ is the 
Lipschitz constant of $f$.
\end{theorem}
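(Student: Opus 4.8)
The plan is to realize the self-consistent solution as the unique fixed point of the map that sends a driving curve to the curve transported by its own generated flow. Fix a curve $\Sigma\in X\coloneqq\rmC^0([0,T];(\Probabilitiesone C,W_1))$. By the linear growth bound \eqref{pv500} and $\Sigma_t\in\Probabilitiesone C$, the Bochner integral $b_\Sigma(t,\cdot)$ in \eqref{pv2bis} is well defined, continuous in $t$, has linear growth, and is $L$-Lipschitz in the spatial variable \emph{uniformly} in $t$ and in $\Sigma$ (since $f$ is $L$-Lipschitz in its first argument). Together with the compatibility condition \eqref{eq:34}, Theorem~\ref{thm:Brezis} then yields the transition maps $\bY_\Sigma(t,s,\cdot)\colon C\to C$ solving \eqref{eq:36}, and a Grönwall estimate gives that $\bY_\Sigma(t,s,\cdot)$ is $\mathrm e^{L(t-s)}$-Lipschitz. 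Because a Lipschitz map sends measures with finite first moment to measures with finite first moment, $\mathcal T(\Sigma)_t\coloneqq\PushForward{\bY_\Sigma(t,0,\cdot)}{\bar\Sigma}$ defines a curve in $\Probabilitiesone C$ with $\mathcal T(\Sigma)_0=\bar\Sigma$; continuity in $t$ with respect to $W_1$ follows from the continuity of $t\mapsto\bY_\Sigma(t,0,z)$. Thus $\mathcal T\colon X\to X$ is a well-defined self-map, and a fixed point of $\mathcal T$ is exactly a Lagrangian solution in the sense of \eqref{pv4bis} for $s=0$.

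The heart of the argument is to control how the flow depends on the driving curve. For curves $\Sigma,\Sigma'$ and a fixed $c\in C$, choosing an optimal coupling of $\Sigma_t$ and $\Sigma_t'$ and using that $f(c,\cdot)$ is $L$-Lipschitz gives the measure-dependence estimate $\norm{b_\Sigma(t,c)-b_{\Sigma'}(t,c)}\le L\,W_1(\Sigma_t,\Sigma_t')$. Writing $y_t=\bY_\Sigma(t,0,z)$ and $y_t'=\bY_{\Sigma'}(t,0,z)$ and splitting
$$b_\Sigma(t,y_t)-b_{\Sigma'}(t,y_t')=\bigl(b_\Sigma(t,y_t)-b_\Sigma(t,y_t')\bigr)+\bigl(b_\Sigma(t,y_t')-b_{\Sigma'}(t,y_t')\bigr),$$
so that $\frac{\de}{\de t}\norm{y_t-y_t'}\le L\norm{y_t-y_t'}+L\,W_1(\Sigma_t,\Sigma_t')$, Grönwall yields $\norm{\bY_\Sigma(t,0,z)-\bY_{\Sigma'}(t,0,z)}\le L\int_0^t\mathrm e^{L(t-r)}W_1(\Sigma_r,\Sigma_r')\,\de r$, uniformly in $z$. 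Pushing $\bar\Sigma$ forward by $z\mapsto(\bY_\Sigma(t,0,z),\bY_{\Sigma'}(t,0,z))$ to produce a competitor coupling then gives $W_1(\mathcal T(\Sigma)_t,\mathcal T(\Sigma')_t)\le L\int_0^t\mathrm e^{L(t-r)}W_1(\Sigma_r,\Sigma_r')\,\de r$.

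To turn this into a contraction over the whole of $[0,T]$ (rather than only for small $T$), I would equip $X$ with the exponentially weighted metric $d_\lambda(\Sigma,\Sigma')\coloneqq\sup_{t\in[0,T]}\mathrm e^{-\lambda t}W_1(\Sigma_t,\Sigma_t')$, which is equivalent to the sup-$W_1$ metric and hence complete (as $(\Probabilitiesone C,W_1)$ is complete, $C$ being a closed subset of the Banach space $Y$). Inserting the previous bound, a direct computation gives $d_\lambda(\mathcal T\Sigma,\mathcal T\Sigma')\le\frac{L}{\lambda-L}\,d_\lambda(\Sigma,\Sigma')$, so for any $\lambda>2L$ the map $\mathcal T$ is a contraction. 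The Banach fixed point theorem produces a unique $\Sigma\in X$ with $\Sigma_t=\PushForward{\bY_\Sigma(t,0,\cdot)}{\bar\Sigma}$; the identity \eqref{pv4bis} for general $0\le s\le t\le T$ then follows from the semigroup property $\bY_\Sigma(t,0,\cdot)=\bY_\Sigma(t,s,\cdot)\circ\bY_\Sigma(s,0,\cdot)$ and $\Sigma_s=\PushForward{\bY_\Sigma(s,0,\cdot)}{\bar\Sigma}$.

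For the stability estimate \eqref{eq:stability}, let $\Sigma,\Sigma'$ be the solutions from $\bar\Sigma,\bar\Sigma'$, fix $0\le s\le t$, and take an optimal coupling $\bar\pi$ of $\Sigma_s,\Sigma_s'$. Setting $u(t)\coloneqq\int_{C\times C}\norm{\bY_\Sigma(t,s,y)-\bY_{\Sigma'}(t,s,y')}\,\de\bar\pi(y,y')$, the pushforward of $\bar\pi$ by $(\bY_\Sigma(t,s,\cdot),\bY_{\Sigma'}(t,s,\cdot))$ is a coupling of $\Sigma_t,\Sigma_t'$, whence $W_1(\Sigma_t,\Sigma_t')\le u(t)$; combining the same splitting as above with $W_1(\Sigma_r,\Sigma_r')\le u(r)$ gives $u'(t)\le 2L\,u(t)$ with $u(s)=W_1(\Sigma_s,\Sigma_s')$, and Grönwall closes the argument with constant $\mathrm e^{2L(t-s)}$. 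The main obstacle is precisely this measure-dependence step: one must transfer a Wasserstein bound on the driving measures into a uniform bound on trajectories and then re-close it at the level of $W_1$ through couplings, all while relying on the constrained-ODE well-posedness of Theorem~\ref{thm:Brezis} to keep the flow inside $C$.
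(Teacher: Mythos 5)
Your overall strategy coincides with the paper's: define $\mathcal T[\Sigma]_t=\PushForward{\bY_\Sigma(t,0,\cdot)}{\bar\Sigma}$, prove the comparison estimate $W_1(\mathcal T[\Sigma]_t,\mathcal T[\Sigma']_t)\le L\int_0^t\mathrm e^{L(t-r)}W_1(\Sigma_r,\Sigma'_r)\,\de r$ by the splitting-plus-Gr\"onwall argument, produce the fixed point in the exponentially weighted sup metric with weight $\lambda>2L$, and obtain \eqref{eq:stability} by pushing an optimal coupling at time $s$ through the two flows. Your stability step (a differential inequality for the fixed-coupling quantity $u(t)$, closed by the bound $W_1(\Sigma_t,\Sigma_t')\le u(t)$) is a slightly cleaner packaging than the paper's, which differentiates $s\mapsto W_1(\Sigma^1_s,\Sigma^2_s)$ itself through an upper right derivative; both are correct, as is your use of the semigroup property to upgrade \eqref{pv4bis} from $s=0$ to general $s$.

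There is, however, one genuine gap: the claim that ``together with the compatibility condition \eqref{eq:34}, Theorem~\ref{thm:Brezis} then yields the transition maps.'' Hypothesis (iii) of Theorem~\ref{thm:Brezis} requires, for every $R>0$, a \emph{single} $\theta>0$ such that $c+\theta b_\Sigma(t,c)\in C$ whenever $c\in C$, $\|c\|\le R$. Writing $c+\theta b_\Sigma(t,c)=\int_C\bigl(c+\theta f(c,c')\bigr)\,\de\Sigma_t(c')$ and invoking convexity and closedness of $C$, one needs $c+\theta f(c,c')\in C$ for $\Sigma_t$-a.e.\ $c'$ with one fixed $\theta$; but \eqref{eq:34} only provides a $\theta$ depending on a radius bounding \emph{both} arguments $c$ and $c'$. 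Since $\bar\Sigma\in\Probabilitiesone{C}$ is not assumed compactly supported (and in the application $C=\R^d\times\Probabilities U$ is unbounded), the measures $\Sigma_t$ may have unbounded support, no uniform $\theta$ is available, and the invariance hypothesis of Brezis' theorem cannot be verified this way — so the flow maps are not yet known to exist for a general admissible driving curve $\Sigma$. The paper deals with exactly this point: Proposition~\ref{pv5}(v) establishes the invariance property \eqref{eq:38} only under the assumption $\Lambda_t(C\setminus B_{\bar R}(0))=0$, the flow is first constructed for such curves, and the general case is then obtained by approximation, using that the quantitative estimates (in particular \eqref{eq:40}) are independent of $\bar R$. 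Your proof needs this truncation-and-limit step (or some substitute) inserted before the fixed-point argument; with it, the rest of your argument goes through essentially verbatim.
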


\subsection{Existence for the discrete problem}

We first study the discrete problem for $N$ particles evolving in $Y$, corresponding to the evolution of a discrete (atomic) measure.  This case could be simply seen as a byproduct of the more general ``diffuse'' measure well-posedness result; however we include it both as a guideline to introduce the more general case and also as a constructive approximation (for $N$ large, see also Remark~\ref{N->oo} below), which could be useful for the purpose of numerical simulation.
We consider the convex set $C^N$ in $Y^N$ with the norm
\begin{displaymath}
  \|\by\|_{Y^N}\coloneqq\frac 1N\sum_{i=1}^N \|y_i\|,\qquad
  \by=(y_1,\ldots,y_N)\in Y^N.
\end{displaymath}
We define the map $\boldsymbol f^N=(f_1^N,\ldots,f_N^N)\colon C^N\to Y^N$ by
\begin{equation}\label{pv8}
f_i^N(\by)=\frac1N\sum_{j=1}^N f(y_i,y_j),\qquad i=1,\ldots,N.
\end{equation}
We notice that $\boldsymbol f$ is Lipschitz, since
\begin{align*}
  \|\boldsymbol f^N(\by)-\boldsymbol f^N(\by')\|_{Y^N}
  &\le 
  \frac1N\sum_{i=1}^N\|f_i^N(\by)-f_i^N(\by')\|
  \\
  &\le 
    \frac1{N^2}\sum_{i,\,j=1}^N \|f(y_i,y_j)
    -f(y_i',y_j')\|
    \le  
    \frac L{N^2}\sum_{i,\,j=1}^N
    (\|y_i-y_i'\|+\|y_j-y_j'\|)
  \\&\le 2L\|\by-\by'\|_{Y^N}.
\end{align*}
Let us now check that $C^N$ satisfies the invariance properties with respect to $\boldsymbol f^N$: if $\by\in C^N$ with $\|\by\|\le R$ then 
every component $y_i$ belongs to $C$ and $\|y_i\|\le NR$.
By \eqref{eq:34} (applied to the constant $NR$) we may find a constant $\theta>0$ such that 
\begin{displaymath}
  y_i+\theta f(y_i,y_j)\in C\qquad\text{for every $i,j$,}
\end{displaymath}
so that the convexity of $C$ yields
\begin{displaymath}
  y_i+\theta f^N_i(\by)=
  \frac1N\sum_{j=1}^N \big(y_i+\theta f(y_i,y_j)\big)\in C.
\end{displaymath}

By applying Theorem~\ref{thm:Brezis} with $C^N$, $Y^N$ we obtain the following result.
\begin{corollary}
  For every $\bar\by\in C^N$, there exists a unique curve 
  $\by\colon[0,+\infty)\to C^N$ of class $\rmC^1$ such that
  \begin{equation}\label{pv1}
    \begin{cases}
      \dot\by(t)=\bff^N(\by^N(t)), &\\
      \by(0)=\bar\by.
    \end{cases}
  \end{equation}
  In particular, the family $\Sigma_t\coloneqq\frac 1N\sum_{i=1}^N
  \delta_{y_{i,t}}$ provides a solution to the existence part of Theorem~\ref{prob:mainBanach}
  for the initial datum $\bar\Sigma\coloneqq\frac 1N\sum_{i=1}^N\delta_{\bar y_i}$.
\end{corollary}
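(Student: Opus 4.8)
The plan is to invoke the abstract ODE result, Theorem~\ref{thm:Brezis}, on the product data $(Y^N,C^N,\bff^N)$ and then to read off the empirical-measure statement by a direct bookkeeping identification. The two hypotheses required by Theorem~\ref{thm:Brezis} have already been verified in the paragraphs preceding the statement: the map $\bff^N$ is $2L$-Lipschitz on $C^N$, and the convex set $C^N$ satisfies the compatibility condition, since for $\by\in C^N$ with $\|\by\|_{Y^N}\le R$ one has $y_i+\theta f_i^N(\by)=\frac1N\sum_{j=1}^N(y_i+\theta f(y_i,y_j))\in C$ by \eqref{eq:34} and convexity. Consequently Theorem~\ref{thm:Brezis} yields, for every $\bar\by\in C^N$, a unique global $\rmC^1$ curve $\by\colon[0,+\infty)\to C^N$ solving \eqref{pv1}; globality is ensured because the bound \eqref{pv500} controls the growth of the $\R^d$-components while the compatibility condition confines the strategy components to $\Probabilities U$, so no finite-time blow-up occurs.

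For the second assertion, set $\Sigma_t\coloneqq\frac1N\sum_{i=1}^N\delta_{y_{i,t}}$. Since each $y_{i,\cdot}=(x_{i,\cdot},\sigma_{i,\cdot})$ is $\rmC^1$ and every $\sigma_{i,t}$ lives in the compact set $\Probabilities U$, the measure $\Sigma_t$ is a finite average of Dirac masses with finite first moment, so $\Sigma_t\in\Probabilitiesone C$, and the continuity of $t\mapsto y_{i,t}$ for $\sfd_C$ gives $\Sigma\in\rmC^0([0,T];(\Probabilitiesone C,W_1))$. The decisive point is that this empirical curve closes the self-consistency loop automatically: by \eqref{pv2bis} the induced field is $b_{\Sigma_t}(y)=\int_C f(y,y')\,\de\Sigma_t(y')=\frac1N\sum_{j=1}^N f(y,y_{j,t})$, and evaluating it at the $i$-th atom gives $b_{\Sigma_t}(y_{i,t})=\frac1N\sum_{j=1}^N f(y_{i,t},y_{j,t})=f_i^N(\by(t))=\dot y_{i,t}$, by \eqref{pv8} and \eqref{pv1}. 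Thus each trajectory $y_{i,\cdot}$ is a characteristic of the (now fixed) time-dependent field $b_{\Sigma_\cdot}$.

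It remains to identify these trajectories with the transition maps of \eqref{eq:30}. Once the curve $\Sigma$ is frozen, $b_{\Sigma_t}$ is a time-dependent Lipschitz field of linear growth, so the associated Cauchy problem has unique solutions and the transition maps $\bY_\Sigma(t,s,\cdot)$ are well defined; by the previous paragraph $y_{i,t}=\bY_\Sigma(t,s,y_{i,s})$ for every $i$ and all $0\le s\le t\le T$. Pushing the empirical measure forward then gives $\Sigma_t=\frac1N\sum_{i=1}^N\delta_{\bY_\Sigma(t,s,y_{i,s})}=\PushForward{\bY_\Sigma(t,s,\cdot)}{\Sigma_s}$, which is exactly the Lagrangian transport identity \eqref{pv4bis} with $\bar\Sigma=\frac1N\sum_{i=1}^N\delta_{\bar y_i}$. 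I expect no serious obstacle here: the only point requiring care is recognizing that the self-referential dependence of $b_\Sigma$ on $\Sigma$ --- which in the diffuse case forces the fixed-point analysis underlying Theorem~\ref{prob:mainBanach} --- is satisfied tautologically for the atomic measure, precisely because the $N$-particle right-hand side $\bff^N$ is by construction the restriction of $b_{\Sigma_t}$ to the support of $\Sigma_t$.
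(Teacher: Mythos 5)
Your proposal is correct and follows essentially the same route as the paper: the paper's proof simply applies Theorem~\ref{thm:Brezis} to the data $(Y^N, C^N, \bff^N)$, whose $2L$-Lipschitz property and compatibility condition \eqref{eq:34} are verified in the preceding paragraphs, exactly as you do. Your explicit bookkeeping for the empirical measure --- the observation that $b_{\Sigma_t}(y_{i,t})=f_i^N(\by(t))$ makes the self-consistency tautological, so that $\Sigma_t=\PushForward{\bY_\Sigma(t,s,\cdot)}{\Sigma_s}$ --- is precisely the identification the paper records in \eqref{eq:25}--\eqref{eq:27} and leaves implicit in the corollary's proof, so it is a faithful (and slightly more detailed) rendering of the same argument.
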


\subsection{Stability estimates}\label{L3}
\begin{proposition}[Properties of $b_t$]\label{pv5}
Let $\Lambda,\,\Lambda'\in \rmC([0,T];\Probabilitiesone C)$, let
$b_\Lambda\,,b_{\Lambda'}$ be defined as in \eqref{pv2bis} and let $f\colon C\times C\to Y$ be
$L$-Lipschitz.
Then
\begin{itemize}
\item[(i)] $\norm{b_\Lambda(t,y)}\leq \norm{f(y_0,y_0)}+L\norm{y-y_0}+L\int_C \norm{y'-y_0}\,\de\Lambda_t(y')$ for all $y_0\in Y$;
\item[(ii)] $\norm{b_\Lambda(t,y)-b_\Lambda(t,z)}\leq L\norm{y-z}$;
\item[(iii)] $\norm{b_\Lambda(t,y)-b_\Lambda(s,y)}\leq L W_1(\Lambda_t,\Lambda_s)$;
\item[(iv)] $\norm{b_{\Lambda}(t,y)-b_{\Lambda'}(t,y)}\leq L
  W_1(\Lambda_t,\Lambda_t')$;
\item[(v)]
  If there exists $\bar R>0$ such that 
  $\Lambda_t(C\setminus B_{\bar R}(0))=0$ for every $t\in [0,T]$,
  then for every $R>0$ there exists $\theta>0$ such that 
  \begin{equation}
    \label{eq:38}
    y\in C,\ \|y\|\le R,\ t\in [0,T]\quad
    \Rightarrow\quad
    y+\theta b_\Lambda(t,y)\in C.
  \end{equation}
\end{itemize}
\end{proposition}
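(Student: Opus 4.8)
The plan is to derive all five items directly from the defining formula $b_\Lambda(t,y)=\int_C f(y,y')\,\de\Lambda_t(y')$, the Lipschitz and linear-growth properties of $f$, and the fact that each $\Lambda_t$ is a probability measure, so that the norm of a Bochner integral is controlled by the integral of the norm, $\norm{\int_C g\,\de\Lambda_t}\leq\int_C\norm g\,\de\Lambda_t$. For (i) I would integrate the linear-growth bound \eqref{pv500}, namely $\norm{f(y,y')}\leq\norm{f(y_0,y_0)}+L\norm{y-y_0}+L\norm{y'-y_0}$, against $\Lambda_t$: since $\Lambda_t(C)=1$ the first two summands are constant in $y'$ and come out unchanged, while the third integrates to the stated term.

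For (ii) I would use the Lipschitz dependence of $f$ in its first slot from \eqref{L10}, $\norm{f(y,y')-f(z,y')}\leq L\norm{y-z}$, which is uniform in $y'$; integrating the pointwise bound over $\Lambda_t$ and pulling the norm inside the integral gives $\norm{b_\Lambda(t,y)-b_\Lambda(t,z)}\leq L\norm{y-z}$ immediately.

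The key step is (iii) and (iv), which I would treat together since they share the same structure: in each case one bounds $\int_C f(y,y')\,\de\mu(y')-\int_C f(y,y')\,\de\mu'(y')$, where the integrand $y'\mapsto f(y,y')$ is $L$-Lipschitz in its second argument by \eqref{L10}. The subtlety is that $f$ is $Y$-valued with only linear growth, so the scalar Kantorovich--Rubinstein duality (which uses bounded test functions) does not apply directly. I would resolve this with the coupling formulation of $W_1$: for any coupling $\Pi\in\Probabilities{C\times C}$ of $\mu$ and $\mu'$, the fact that $\Pi$ has marginals $\mu,\mu'$ lets me rewrite the difference as $\int_{C\times C}\bigl(f(y,y')-f(y,y'')\bigr)\,\de\Pi(y',y'')$; taking norms and using the Lipschitz estimate gives $\norm{\cdot}\leq L\int_{C\times C}\norm{y'-y''}\,\de\Pi(y',y'')$, and passing to the infimum over couplings yields the bound $L\,W_1(\mu,\mu')$ without needing an optimal coupling to exist. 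Specializing to $(\mu,\mu')=(\Lambda_t,\Lambda_s)$ proves (iii) and to $(\mu,\mu')=(\Lambda_t,\Lambda_t')$ proves (iv).

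For (v) I would invoke the compatibility condition \eqref{eq:34} on $f$. Given $R$ and the support radius $\bar R$, set $R'\coloneqq\max\{R,\bar R\}$ and let $\theta>0$ be the constant furnished by \eqref{eq:34} for $R'$. For $y\in C$ with $\norm y\leq R$ and $t\in[0,T]$, one has $y+\theta f(y,y')\in C$ for $\Lambda_t$-almost every $y'$, since such $y'$ satisfy $\norm{y'}\leq\bar R\leq R'$. Because $\int_C y\,\de\Lambda_t(y')=y$, I can write $y+\theta b_\Lambda(t,y)=\int_C\bigl(y+\theta f(y,y')\bigr)\,\de\Lambda_t(y')$, which is the Bochner integral against a probability measure of a map taking values in the closed convex set $C$, and therefore lies in $C$. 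The only delicate point in this item is that Bochner integration against a probability measure preserves membership in a closed convex set, a standard consequence of the separating-hyperplane theorem. Overall the single real obstacle is the passage from scalar to $Y$-valued Kantorovich estimates in (iii)--(iv), handled by the coupling representation above; the remaining items are routine integrations of the pointwise bounds on $f$.
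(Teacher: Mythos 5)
Your proof is correct, and items (i), (ii), and (v) follow the paper's argument essentially verbatim (integrate \eqref{pv500}, integrate the first-slot Lipschitz bound, and represent $y+\theta b_\Lambda(t,y)$ as a Bochner integral of the $C$-valued map $y'\mapsto y+\theta f(y,y')$ against the probability measure $\Lambda_t$, using closedness and convexity of $C$). Where you genuinely diverge is in (iii)--(iv). The paper dualizes the vector-valued estimate via Hahn--Banach, writing $\norm{\int_C f(y,y')\,\de(\Lambda_t-\Lambda_s)(y')}=\sup_{z\in Y',\,\|z\|_{Y'}\le 1}\int_C\langle z,f(y,y')\rangle\,\de(\Lambda_t-\Lambda_s)(y')$ and then applying the scalar Kantorovich--Rubinstein duality to each test function $y'\mapsto\langle z,f(y,y')\rangle$, which is $L$-Lipschitz. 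You instead rewrite the difference of the two integrals against an arbitrary coupling $\Pi$ of the two measures, estimate $\norm{f(y,y')-f(y,y'')}\le L\norm{y'-y''}$ under the integral, and pass to the infimum over couplings. Both routes are valid, but yours quietly repairs a small gap in the paper's version: the scalar functions $\langle z,f(y,\cdot)\rangle$ are Lipschitz but in general unbounded, whereas the paper's stated dual formula for $W_1$ ranges over \emph{bounded} Lipschitz test functions, so the paper implicitly relies on the standard extension of Kantorovich duality to unbounded Lipschitz functions on $\Probabilitiesone{C}$ (e.g.\ by truncation). Your coupling argument needs no such extension and no Hahn--Banach step; its only prerequisites are the marginal property of couplings (a change of variables for the Bochner integral, legitimate since $f(y,\cdot)$ has linear growth and both marginals have finite first moment) and the elementary norm-integral inequality \eqref{eq:intstimanorme}. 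In exchange, the paper's dual argument is a line shorter and reduces everything to the scalar theory it has already set up in Lemma~\ref{sfdc1}.
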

\begin{proof}
Property (i) follows immediately from \eqref{pv500}.
To prove (ii), we notice that 
\begin{align*}
  \|b_\Lambda(t,y)-b_\Lambda(t,z)\|
  &=
    \left\|
    \int_C \Big(f(y,y')-f(z,y')\Big)\,\de\Lambda_t(y')\right\|
    \le L\|y-z\|.  
\end{align*}
Estimate (iii) is a simple computation
\begin{equation*}
\begin{split}
\norm{b_\Lambda(t,y)-b_\Lambda(s,y)}&=  \left\lVert\int_C
  f(y,y')\,\de(\Lambda_t-\Lambda_s)(y')\right\rVert
\\&=
\sup_{z\in {\crd Y'},\ \|z\|_{ {\crd Y'}}\le 1}\int_C \langle z,f(y,y')\rangle\,\de(\Lambda_t-\Lambda_s)(y') \\
&\leq L W_1(\Lambda_t,\Lambda_s),
\end{split}
\end{equation*}
where we have used that the map $y'\mapsto\langle z,f(y,y')\rangle$ is $L$-Lipschitz.
The proof of (iv) is analogous.

Let us now consider the last statement (v); we may assume
$R\ge \bar R$ and we can choose $\theta>0$ such that \eqref{eq:34}
holds. Therefore
\begin{displaymath}
  y+\theta b_\Lambda(t,y)=
  \int_C \Big(y+\theta f(y,y')\Big)\,\de\Lambda_t(y')\in C,
\end{displaymath}
since $C$ is convex and closed, and $\Lambda_t$ is a probability measure.
\end{proof}

\begin{corollary}
  Let $\Lambda,\,\Lambda^i\in \rmC([0,T];\Probabilitiesone C)$, let
  $b_\Lambda,\,b_{\Lambda^i}$ be defined as in \eqref{pv2bis}, $y_0\in Y$
  and let $f\colon C\times C\to Y$ be $L$-Lipschitz. Then
\begin{enumerate} 
\item for every $y\in C$ and $s\in [0,T]$  there exists a unique solution
  $y_t=\bY_\Lambda(t,s,y)$ from $[s,T]$   to $C$ of class $\rmC^1$ of the Cauchy problem
  \begin{equation}
    \label{eq:39}
    \dot y_r=b_\Lambda(r,y_r)=b_{\Lambda_r}(y_r),\qquad
    y_s=y;
  \end{equation}
\item $\bY_\Lambda(t,0,y)$ satisfies the estimate
\begin{equation}\label{pv502}
\norm{\bY_\Lambda(t,0,y)-y_0}\leq \big(\norm{y-y_0}+tB(\Lambda,t,{\crd  y_0})\big)\mathrm{e}^{Lt}
\end{equation}
with 
\begin{equation}\label{pv501}
B(\Lambda,t,y_0)\coloneqq\norm{f(y_0,y_0)}+L\max_{s\in[0,t]}\int_C \norm{y'-y_0}\,\de\Lambda_s(y');
\end{equation}
\item $\bY_\Lambda(\cdot,0,y)$ satisfies the estimate
\begin{equation}\label{pv503}
\!\!\! \norm{\bY_\Lambda(t,0,y)-\bY_\Lambda(t',0,y)}\leq |t-t'|\big[B(\Lambda,T,y_0)+L\big(\norm{y-y_0}+TB(\Lambda,T,y_0)
\big)\mathrm{e}^{LT}\big];
\end{equation}
\item  $\bY_\Lambda(t,s,\cdot)$ satisfies the estimate
  \begin{equation}
    \label{eq:40}
    \|\bY_\Lambda(t,s,y)-\bY_\Lambda(t,s,y')\|
    \le \mathrm e^{L(t-s)}\,\|y-y'\|, \qquad 0\le s\le t\le T;
  \end{equation}
\item more generally, $\bY_{\Lambda^1},\,\bY_{\Lambda^2}$ satisfy the
  estimate for $0\le s\le t\le T$:
\begin{equation}
    \label{eq:40}
    \|\bY_{\Lambda^1}(t,s,y^1)-\bY_{\Lambda^2}(t,s,y^2)\|
    \le \mathrm e^{L(t-s)}\,\|y^1-y^2\|+
    L\int_s^t \mathrm e^{L(t-\tau)} W_1(\Lambda^1_\tau,\Lambda^2_\tau)\,\de\tau.
  \end{equation}
\end{enumerate}
\end{corollary}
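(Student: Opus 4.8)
The plan is to reduce the statement to a single application of the integral form of Gronwall's inequality, once the difference of the two driving vector fields has been suitably split and controlled by the Lipschitz properties already established in Proposition~\ref{pv5}. Writing $y^i_t\coloneqq\bY_{\Lambda^i}(t,s,y^i)$ for $i=1,2$, the $\rmC^1$ solutions of the Cauchy problem \eqref{eq:39} can be recast in integral form as
\[
y^i_t=y^i+\int_s^t b_{\Lambda^i}(r,y^i_r)\,\de r,
\]
so that, putting $w(t)\coloneqq\|y^1_t-y^2_t\|$,
\[
y^1_t-y^2_t=(y^1-y^2)+\int_s^t\bigl(b_{\Lambda^1}(r,y^1_r)-b_{\Lambda^2}(r,y^2_r)\bigr)\,\de r.
\]

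The crucial step is to insert the mixed term $b_{\Lambda^1}(r,y^2_r)$ and split
\[
b_{\Lambda^1}(r,y^1_r)-b_{\Lambda^2}(r,y^2_r)=\bigl(b_{\Lambda^1}(r,y^1_r)-b_{\Lambda^1}(r,y^2_r)\bigr)+\bigl(b_{\Lambda^1}(r,y^2_r)-b_{\Lambda^2}(r,y^2_r)\bigr).
\]
The first term is controlled by the spatial Lipschitz bound~(ii) of Proposition~\ref{pv5}, which gives $\|b_{\Lambda^1}(r,y^1_r)-b_{\Lambda^1}(r,y^2_r)\|\le L\,w(r)$; the second term is controlled by the measure-stability bound~(iv) of Proposition~\ref{pv5}, which gives $\|b_{\Lambda^1}(r,y^2_r)-b_{\Lambda^2}(r,y^2_r)\|\le L\,W_1(\Lambda^1_r,\Lambda^2_r)$. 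Taking norms and using the triangle inequality for the Bochner integral, I arrive at the scalar integral inequality
\[
w(t)\le\|y^1-y^2\|+\int_s^t\Bigl(L\,w(r)+L\,W_1(\Lambda^1_r,\Lambda^2_r)\Bigr)\,\de r.
\]

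It then suffices to invoke the sharp integral Gronwall lemma: if $w(t)\le a+\int_s^t\bigl(L\,w(r)+g(r)\bigr)\,\de r$ with $a\ge 0$ and $g\ge 0$ locally integrable, then $w(t)\le a\,\mathrm e^{L(t-s)}+\int_s^t\mathrm e^{L(t-\tau)}g(\tau)\,\de\tau$; one obtains this by setting $\phi(t)\coloneqq\int_s^t\bigl(L\,w+g\bigr)\,\de r$, noting $\phi'\le L(a+\phi)+g$, and integrating $\mathrm e^{-Lr}\phi$. With $a=\|y^1-y^2\|$ and $g(\tau)=L\,W_1(\Lambda^1_\tau,\Lambda^2_\tau)$ this is precisely the claimed bound, and the choice $\Lambda^1=\Lambda^2$ reduces it to the purely spatial stability estimate of the preceding item. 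I do not expect any genuine obstacle here: the only points worth a word are the continuity, hence measurability and local integrability, of $\tau\mapsto W_1(\Lambda^1_\tau,\Lambda^2_\tau)$, which follows from $\Lambda^i\in\rmC([0,T];\Probabilitiesone C)$, and the finiteness of all the quantities involved, which is guaranteed because both curves take values in $\Probabilitiesone C$, so that the $W_1$ distances are finite and the Bochner integrals defining $b_{\Lambda^i}$ make sense via the linear-growth bound~\eqref{pv500}.
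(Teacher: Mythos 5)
Your Gronwall argument for item (v) is correct, and it is essentially the paper's own computation: the same insertion of the mixed term $b_{\Lambda^1}(r,y^2_r)$, the same appeal to Proposition~\ref{pv5}(ii) and (iv), and then Gronwall. The only difference is cosmetic: the paper runs the argument as a differential inequality for $t\mapsto\|y^1_t-y^2_t\|$ followed by a comparison argument, while you use the integral form, which is arguably cleaner since it avoids differentiating the norm of the difference. Obtaining (iv) by taking $\Lambda^1=\Lambda^2$ is also legitimate (the paper instead gets (iv) directly from the contraction estimate \eqref{eq:18} of Theorem~\ref{thm:Brezis}).

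The genuine gap is that you have proved only two of the five claims. Your argument begins by writing $y^i_t=\bY_{\Lambda^i}(t,s,y^i)$ and recasting the ODE in integral form, i.e., you \emph{assume} that the flow maps exist; but item (i) of the corollary is precisely the existence and uniqueness of a $\rmC^1$ solution with values in the convex set $C$, and this is not free. It follows from Brezis' theorem (Theorem~\ref{thm:Brezis}), whose hypotheses must be verified for $b_\Lambda$: the Lipschitz and continuity requirements come from Proposition~\ref{pv5}(i)--(iii), but the crucial invariance condition \eqref{eq:16} is supplied by Proposition~\ref{pv5}(v) \emph{only} under the hypothesis $\Lambda_t(C\setminus B_{\bar R}(0))=0$ for some $\bar R$ independent of $t$. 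For a general $\Lambda\in\rmC([0,T];\Probabilitiesone{C})$, whose measures need not have bounded support, the paper first establishes (i) and the estimates for curves concentrated on a ball and then passes to the general case by approximation, using that all the constants are independent of $\bar R$; none of this appears in your proposal, and without it item (v) has no solutions to compare. Finally, items (ii) and (iii) --- the bounds \eqref{pv502} and \eqref{pv503}, which are exactly what is needed later to show that the map $\mathcal T$ sends $\mathscr A$ into $\mathscr A$ (time-continuity of $t\mapsto\mathcal T[\Lambda]_t$) --- are not addressed at all. They require a separate, if routine, Gronwall estimate based on Proposition~\ref{pv5}(i) and the definition \eqref{pv501} of $B(\Lambda,t,y_0)$, so the proposal as written is materially incomplete.
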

\begin{proof}
  Let us first assume that $\Lambda$ and $\Lambda^i$ are
concentrated on a ball of radius $\bar R$ in $Y$.
Then statements (i) and (iv) immediately follow by Theorem~\ref{thm:Brezis}, thanks to the estimates of Proposition \ref{pv5}.

The proof of (ii) is a computation:
recalling \eqref{pv2}, Proposition~\ref{pv5}(i) and the definition \eqref{pv501} of $B(\Lambda,t,y_0)$,
the triangle inequality gives
\begin{equation}
\begin{split}
\norm{\bY_\Lambda(t,0,y)-y_0}\leq & \norm{y-y_0}+\int_0^t \norm{b_{\Lambda_s}(y(s))}\,\de s \\
\leq &  \norm{y-y_0}+tB(\Lambda,t,y_0)+L\int_0^t \norm{y(s)-y_0}\,\de s,
\end{split}
\end{equation}
which yields \eqref{pv502} by Gronwall's inequality.
Estimate \eqref{pv503} follows from combining Proposition~\ref{pv5}(i) with estimate \eqref{pv502}, so that (iii) is proved.

Concerning (v), it is clearly sufficient to consider the case $s=0$.
Denoting by $y^i_t$, $t\in [0,T]$, the solutions to \eqref{eq:39} with respect to the fields $b_t^i=b_{\Lambda_t^i}$ 
and the  initial conditions $\bar y^i$, from $\dot y^i=b_t^i(y^i)$ we have, by Proposition~\ref{pv5}(ii) and (iv)
$$
\frac\de{\de t}\norm{y^1-y^2}(t)\leq 
\norm{b^1_t(y^1)-b^1_t(y^2)}+\norm{b_t^1(y^2)-b^2_t(y^2)}\leq
L\norm{y^1-y^2}+L W_1(\Lambda_t^1,\Lambda_t^2),$$
which gives by a simple comparison argument
\begin{equation}\label{pv18}
\norm{y^1(t)-y^2(t)}\leq 
\mathrm e^{Lt}\norm{y^1-y^2}+L\int_0^t \mathrm e^{L(t-\tau)}W_1(\Lambda_\tau^1,\Lambda_\tau^2)\,\de\tau.
\end{equation}
The general case when $\Lambda$ may have unbounded support can be obtained by approximation, using once
more Proposition~\ref{pv5}, since the estimates
are independent of $\bar R$.
\end{proof}

\subsection{Contractivity and stability}
We now fix  $\bar \Sigma\in \Probabilitiesone C$ 
and we consider the metric space
\begin{equation}
  \label{eq:41}
  \mathscr A\coloneqq\big\{\Lambda\in \rmC([0,T];(\Probabilitiesone C,W_1)):
  \Lambda_0=\bar \Sigma\big\},
\end{equation}
complete when endowed with the usual sup distance (as a consequence
of the completeness of $(\Probabilitiesone C,W_1)$). We define a map $\mathcal T\colon\mathscr A\to\mathscr A$ in the
following way:
given $\Lambda\in \mathscr A$ we first compute the flow map
$\bY_\Lambda(t,s,\cdot)$ associated to $b_\Lambda$ and then we define
the curve $\mathcal T[\Lambda]\colon [0,T]\to\Probabilitiesone C$ by
\begin{equation}\label{eq:42}
\mathcal T[\Lambda]_t\coloneqq \bY_\Lambda(t,0,\cdot)_\#\bar\Sigma
\end{equation}
It is immediate to check that $\mathcal T$ maps $\mathscr A$ to $\mathscr A$.

\begin{lemma}
For every $\Lambda, \,\Lambda^1,\,\Lambda^2\in \mathscr A$ we have
\begin{equation}\label{pv504}
W_1(\mathcal T[\Lambda]_t,\mathcal T[\Lambda]_s)\leq |t-s| \bigg[B(\Lambda,T,y_0)+
L\bigg(\int_C\norm{y-y_0}\,\de\bar\Sigma(y)+TB(\Lambda,T,y_0)\bigg)\mathrm{e}^{LT}\bigg],
\end{equation}
\begin{equation}
    \label{eq:43} 
    W_1(\mathcal T[\Lambda^1]_t,\mathcal T[\Lambda^2]_t)\le 
    L\int_0^t \mathrm e^{L(t-\tau)}W_1(\Lambda^1_\tau,\Lambda^2_\tau)\,\de \tau,
  \end{equation}
  where the constant $B(\Lambda,T,y_0)$ is defined in \eqref{pv501} for $t=T$.
\end{lemma}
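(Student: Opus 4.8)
The plan is to reduce both inequalities to the pointwise flow estimates already established in the preceding Corollary, via the elementary observation that transporting a single fixed measure by two different maps produces an admissible coupling of the two images. Concretely, for any two Borel maps $S_1,S_2\colon C\to C$ and any $\mu\in\Probabilitiesone C$, the measure $(S_1,S_2)_\#\mu$ is a coupling of $(S_1)_\#\mu$ and $(S_2)_\#\mu$, so from the coupling definition of the Kantorovich--Rubinstein distance (recalling that $\sfd_C$ is induced by $\norm\cdot$, the norm of $Y$, by \eqref{eq:29}) one gets
\[
W_1\big((S_1)_\#\mu,(S_2)_\#\mu\big)\le \int_C \sfd_C\big(S_1(y),S_2(y)\big)\,\de\mu(y)=\int_C \norm{S_1(y)-S_2(y)}\,\de\mu(y).
\]
First I would note that the flow maps involved are continuous, hence Borel measurable, by the Corollary, so that this coupling bound applies.

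To obtain \eqref{pv504} I would apply the bound with $\mu=\bar\Sigma$, $S_1=\bY_\Lambda(t,0,\cdot)$, and $S_2=\bY_\Lambda(s,0,\cdot)$, so that $\mathcal T[\Lambda]_t=(S_1)_\#\bar\Sigma$ and $\mathcal T[\Lambda]_s=(S_2)_\#\bar\Sigma$ by the definition \eqref{eq:42}. Then I would insert the time-regularity estimate \eqref{pv503} for $\bY_\Lambda(\cdot,0,y)$ pointwise in $y$ and integrate against $\bar\Sigma$. Since $\bar\Sigma$ is a probability measure, the part of the bound not depending on $y$ integrates to itself, while the term $L\,\norm{y-y_0}\mathrm e^{LT}$ integrates to $L\mathrm e^{LT}\int_C\norm{y-y_0}\,\de\bar\Sigma(y)$; collecting the terms reproduces exactly the right-hand side of \eqref{pv504}.

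For \eqref{eq:43} I would use the same coupling bound with $\mu=\bar\Sigma$, $S_1=\bY_{\Lambda^1}(t,0,\cdot)$, and $S_2=\bY_{\Lambda^2}(t,0,\cdot)$, and then invoke the general two-field stability estimate (item (v) of the Corollary) with $s=0$ and $y^1=y^2=y$. In that estimate the initial term $\mathrm e^{Lt}\norm{y^1-y^2}$ vanishes, leaving the bound $L\int_0^t \mathrm e^{L(t-\tau)}W_1(\Lambda^1_\tau,\Lambda^2_\tau)\,\de\tau$, which is independent of $y$. Integrating this $y$-independent quantity against the probability measure $\bar\Sigma$ leaves it unchanged, giving \eqref{eq:43}.

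Since everything rests on estimates proved earlier, there is no genuinely hard step here; the only points requiring a little care are checking that the pushforward indeed furnishes a coupling (whence the inequality, not an equality) and that the distance defining $W_1$ on $C$ agrees with $\norm\cdot$ on differences, both immediate from Section~2 and the definition \eqref{eq:29} of $\sfd_C$.
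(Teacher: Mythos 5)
Your proposal is correct and follows essentially the same route as the paper: both estimates are obtained by pushing $\bar\Sigma$ forward under the two flow maps, using the coupling $(S_1,S_2)_\#\bar\Sigma$ to bound $W_1$ by $\int_C\norm{S_1(y)-S_2(y)}\,\de\bar\Sigma(y)$, and then inserting the pointwise flow estimates \eqref{pv503} (for the time regularity) and the two-field stability estimate of item (v) with $y^1=y^2$ (for the contraction bound), exploiting that $\bar\Sigma$ is a probability measure. The only difference is cosmetic: you spell out explicitly the coupling argument that the paper leaves implicit in its one-line displayed inequality.
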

\begin{proof}
Estimate \eqref{pv504} follows immediately from the definition of $\mathcal T[\Lambda]$ in \eqref{eq:42}, 
estimate \eqref{pv503}, and the fact that $\bar\Sigma$ is a probability measure.
Estimate \eqref{eq:43} is a direct consequence of \eqref{eq:40}, since
\begin{equation*}
W_1(\mathcal T[\Lambda^1]_t,\mathcal T[\Lambda^2]_t)\le \int_C \|\bY_{\Lambda^1}(t,0,y)-\bY_{\Lambda^2}(t,0,y)\|\,\de\bar\Sigma(y)
\end{equation*}
and $\bar\Sigma$ is a probability measure.
\end{proof}

\begin{corollary}
  The map $\mathcal T$ admits a unique fixed point, which provides the 
  unique solution $\Sigma$ in Theorem~\ref{prob:mainBanach}.
\end{corollary}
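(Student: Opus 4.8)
The plan is to read this as a Banach--Picard fixed-point statement: the contraction estimate \eqref{eq:43} makes $\mathcal T$ (essentially) contractive on the complete space $\mathscr A$, and the fixed point is then identified with the Lagrangian solution of Theorem~\ref{prob:mainBanach}. The one point requiring care is that \eqref{eq:43} is not itself a contraction in the sup distance $d(\Lambda^1,\Lambda^2)=\sup_{t\in[0,T]}W_1(\Lambda^1_t,\Lambda^2_t)$, since the ambient constant $L\mathrm e^{LT}T$ may exceed $1$. I would remove this obstruction by passing to the equivalent weighted distance $d_\lambda(\Lambda^1,\Lambda^2)\coloneqq\sup_{t\in[0,T]}\mathrm e^{-\lambda t}W_1(\Lambda^1_t,\Lambda^2_t)$ with $\lambda>2L$. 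Substituting $W_1(\Lambda^1_\tau,\Lambda^2_\tau)\le\mathrm e^{\lambda\tau}d_\lambda(\Lambda^1,\Lambda^2)$ into \eqref{eq:43} and evaluating $\int_0^t\mathrm e^{(\lambda-L)\tau}\,\de\tau$ yields
\begin{equation*}
\mathrm e^{-\lambda t}\,W_1(\mathcal T[\Lambda^1]_t,\mathcal T[\Lambda^2]_t)\le\frac{L}{\lambda-L}\,d_\lambda(\Lambda^1,\Lambda^2),
\end{equation*}
so that $\mathcal T$ contracts $d_\lambda$ with factor $L/(\lambda-L)<1$. Since $d_\lambda$ is bi-Lipschitz equivalent to $d$, the space $(\mathscr A,d_\lambda)$ is complete, and the Banach fixed-point theorem produces a unique $\Sigma\in\mathscr A$ with $\mathcal T[\Sigma]=\Sigma$. (Equivalently, iterating \eqref{eq:43} gives $d(\mathcal T^n[\Lambda^1],\mathcal T^n[\Lambda^2])\le d(\Lambda^1,\Lambda^2)\,(L\mathrm e^{LT}T)^n/n!$, so some power $\mathcal T^n$ is a contraction and $\mathcal T$ has a unique fixed point.)

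Next I would identify this fixed point with the solution of Theorem~\ref{prob:mainBanach}. By the defining formula \eqref{eq:42}, the identity $\Sigma=\mathcal T[\Sigma]$ reads $\Sigma_t=\PushForward{\bY_\Sigma(t,0,\cdot)}{\bar\Sigma}$ for all $t\in[0,T]$, i.e.\ \eqref{pv4bis} with $s=0$ (and in particular $\Sigma_0=\bar\Sigma$). To promote this to \eqref{pv4bis} for arbitrary $0\le s\le t\le T$ I would use the cocycle identity $\bY_\Sigma(t,0,y)=\bY_\Sigma(t,s,\bY_\Sigma(s,0,y))$, which follows from the uniqueness of $\rmC^1$ solutions of the Cauchy problem \eqref{eq:39}; pushing $\bar\Sigma$ forward and using the functoriality of the push-forward gives $\Sigma_t=\PushForward{\bY_\Sigma(t,s,\cdot)}{\Sigma_s}$. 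Conversely, every Lagrangian solution satisfies \eqref{pv4bis} and is therefore a fixed point of $\mathcal T$, so the uniqueness of the fixed point is exactly the uniqueness asserted in the theorem.

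It remains to record the stability estimate \eqref{eq:stability}, which I would derive from the two-curve flow estimate \eqref{eq:40}. Choosing an optimal $W_1$-coupling of $\Sigma_s$ and $\Sigma_s'$ and transporting it through the pair $\bigl(\bY_\Sigma(t,s,\cdot),\bY_{\Sigma'}(t,s,\cdot)\bigr)$ produces an admissible coupling of $\Sigma_t$ and $\Sigma_t'$, whence
\begin{equation*}
W_1(\Sigma_t,\Sigma_t')\le\mathrm e^{L(t-s)}\,W_1(\Sigma_s,\Sigma_s')+L\int_s^t\mathrm e^{L(t-\tau)}\,W_1(\Sigma_\tau,\Sigma_\tau')\,\de\tau,
\end{equation*}
and a Gronwall argument applied to $t\mapsto\mathrm e^{-Lt}W_1(\Sigma_t,\Sigma_t')$ upgrades the prefactor to $\mathrm e^{2L(t-s)}$. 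I expect the only genuinely delicate ingredient to be the contraction step above: once the reweighting (or the factorial bound from iteration) is in place, every remaining step is a routine assembly of the push-forward calculus, the uniqueness of the flow, and the estimates already proved in this section.
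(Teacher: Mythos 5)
Your proof is correct and takes essentially the same approach as the paper: the paper likewise passes to the exponentially weighted distance $\max_{t\in[0,T]}\mathrm e^{-L't}W_1(\Lambda_t,\Lambda'_t)$ with $L'>2L$ and reads off from \eqref{eq:43} the contraction factor $L/(L'-L)<1$, then invokes the Banach fixed-point theorem. Your additional paragraphs (identifying the fixed point with the Lagrangian solution via the cocycle property of the flow, and deriving the stability estimate by optimal couplings, \eqref{eq:40}, and Gronwall) are also sound, and they reproduce what the paper handles in the surrounding text, in particular in the stability lemma immediately following the corollary.
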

\begin{proof}
  Let us fix a constant $L'>2L$ so that $\ell\coloneqq L/(L'-L)<1$ and let us consider the equivalent distance in $\mathscr A$ given by
  $$
  \sfd(\Lambda,\Lambda')\coloneqq \max_{t\in [0,T]}\mathrm e^{-L't}W_1(\Lambda_t,\Lambda_t').
  $$
  Then, from \eqref{eq:43} we immediately get
  $$
  \mathrm e^{-L't}W_1(\mathcal T[\Lambda]_t,\mathcal T[\Lambda']_t)\leq 
  L\int_0^t \mathrm e^{(L-L')(t-s)}\,\de s \,
  W_1(\Lambda_r,\Lambda_r')
  \quad\forall t\in [0,T],
  $$
  so that our choice of $L'$ gives $\sfd(\mathcal T[\Lambda],\mathcal T[\Lambda'])\leq \ell\sfd(\Lambda,\Lambda')$.
\end{proof}

We can slightly modify the previous argument in order to derive a 
stability estimate of the solution $\Sigma_t$ in terms of the initial
datum $\bar \Sigma$. 
\begin{lemma}
  Let $\bar\Sigma^1,\,\bar\Sigma^2$ be initial data in $\Probabilitiesone C$ and
  let $\Sigma^i_t$ be the corresponding solutions. Then
  \begin{equation}
    \label{eq:44}
    W_1(\Sigma^1_t,\Sigma^2_t)\le \mathrm e^{2L t}\,W_1(\bar\Sigma^1,\bar\Sigma^2)\qquad\forall t\in [0,T].
  \end{equation}
\end{lemma}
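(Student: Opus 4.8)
The plan is to combine the two fixed-point identities $\Sigma^i_t=\PushForward{\bY_{\Sigma^i}(t,0,\cdot)}{\bar\Sigma^i}$ with the general flow-comparison estimate \eqref{eq:40} (item (v) of the preceding Corollary, comparing $\bY_{\Lambda^1}$ and $\bY_{\Lambda^2}$ for possibly different driving measures), and then to close the resulting self-referential integral inequality by a Gronwall argument. The point is that the two solutions are transported by \emph{different} flows $\bY_{\Sigma^1}$ and $\bY_{\Sigma^2}$, and \eqref{eq:40} is exactly the estimate that quantifies the discrepancy between such flows in terms of $W_1(\Sigma^1_\tau,\Sigma^2_\tau)$.

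First I would fix $t\in[0,T]$ and pick an optimal plan $\Pi\in\Probabilities{C\times C}$ between $\bar\Sigma^1$ and $\bar\Sigma^2$, so that $\int_{C\times C}\norm{y^1-y^2}\,\de\Pi(y^1,y^2)=W_1(\bar\Sigma^1,\bar\Sigma^2)$; such a plan exists since $C$ is a closed subset of the separable Banach space $Y$, hence Polish, and the cost $\norm{y^1-y^2}$ is precisely the one inducing $W_1$ on $C$ (recall \eqref{eq:29}). Pushing $\Pi$ forward through the map $(y^1,y^2)\mapsto\bigl(\bY_{\Sigma^1}(t,0,y^1),\bY_{\Sigma^2}(t,0,y^2)\bigr)$ yields an admissible coupling of $\Sigma^1_t$ and $\Sigma^2_t$, so that
$$
W_1(\Sigma^1_t,\Sigma^2_t)\le\int_{C\times C}\norm{\bY_{\Sigma^1}(t,0,y^1)-\bY_{\Sigma^2}(t,0,y^2)}\,\de\Pi(y^1,y^2).
$$
Inserting \eqref{eq:40} with $\Lambda^i=\Sigma^i$ and $s=0$ into the integrand, and observing that the term $L\int_0^t\mathrm e^{L(t-\tau)}W_1(\Sigma^1_\tau,\Sigma^2_\tau)\,\de\tau$ is independent of $(y^1,y^2)$ while $\Pi$ is a probability measure, one arrives (writing $w(\tau)\coloneqq W_1(\Sigma^1_\tau,\Sigma^2_\tau)$ and using optimality of $\Pi$) at
$$
w(t)\le\mathrm e^{Lt}\,w(0)+L\int_0^t\mathrm e^{L(t-\tau)}w(\tau)\,\de\tau.
$$

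Finally I would resolve this inequality by the standard substitution $v(\tau)\coloneqq\mathrm e^{-L\tau}w(\tau)$, which converts it into $v(t)\le w(0)+L\int_0^t v(\tau)\,\de\tau$; Gronwall's lemma then gives $v(t)\le w(0)\mathrm e^{Lt}$, that is $w(t)\le\mathrm e^{2Lt}w(0)$, which is exactly \eqref{eq:44}. The continuity of $\tau\mapsto w(\tau)$ required to apply Gronwall is guaranteed by $\Sigma^i\in\rmC([0,T];(\Probabilitiesone C,W_1))$. The only genuinely delicate point is conceptual rather than computational: because each flow $\bY_{\Sigma^i}$ is driven by the very measure it transports, \eqref{eq:40} reproduces the unknown $w$ inside the integral, so the bound cannot be read off directly and must be closed self-consistently through Gronwall; the coupling step and the substitution are otherwise routine.
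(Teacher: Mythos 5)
Your proof is correct and follows essentially the same route as the paper: choose an optimal coupling, push it forward through the two flows $\bY_{\Sigma^1},\bY_{\Sigma^2}$, invoke the flow-comparison estimate \eqref{eq:40} (item (v) of the corollary), and close the resulting self-referential inequality by a Gronwall argument. The only cosmetic difference is that the paper couples at a general time $s$ and passes to a differential inequality for the upper right Dini derivative of $s\mapsto W_1(\Sigma^1_s,\Sigma^2_s)$, whereas you couple only at $s=0$ and close the integral inequality via the substitution $v(\tau)=\mathrm e^{-L\tau}w(\tau)$; both give the same constant $\mathrm e^{2Lt}$.
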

\begin{proof}
  Let us fix $0\le s<t\le T$ and 
  consider an optimal coupling $\Pi_s$ between $\Sigma_s^1$ and $\Sigma_s^2$, so that 
  using the fact that $\PushForward{\big(\bY_{\Sigma^1}(t,s,y^1),\bY_{\Sigma^2}(t,s,y^2)\big)}{\Pi_s}$ 
  is a coupling between $\Sigma_t^1$ and $\Sigma_t^2$
  we can write
\begin{equation}\label{pv17}
  W_1(\Sigma_t^1,\Sigma_t^2)\leq \int_{C\times C} \norm{\bY_{\Sigma^1}(t,s,y^1)-\bY_{\Sigma^2}(t,s,y^2)}\,\de\Pi_s(y^1,y^2).
\end{equation}
By \eqref{eq:40} we get 
\begin{equation*}\label{pv20}
\begin{split}
W_1(\Sigma_t^1,\Sigma_t^2)\leq{} & 
\mathrm e^{L(t-s)}\int_{C\times C} \norm{y^1-y^2}\,\de\Pi_s(y^1,y^2)+ L\int_s^t\mathrm e^{L(t-\tau)}W_1(\Sigma_\tau^1,\Sigma_\tau^2)\,\de\tau\\
={}& \mathrm e^{L(t-s)}W_1(\Sigma_s^1,\Sigma_s^2)+ L\int_s^t\mathrm e^{L(t-\tau)}W_1(\Sigma_\tau^1,\Sigma_\tau^2)\,\de\tau.
\end{split}
\end{equation*}
Choosing $t=s+h$, this proves that the upper right derivative $\de/\de s_+$ of the map $s\mapsto
W_1(\Sigma_s^1,\Sigma_s^2)$ satisfies
$$\frac{\de}{\de s_+} W_1(\Sigma_s^1,\Sigma_s^2)\leq 2L
W_1(\Sigma_s^1,\Sigma_s^2)$$
 and therefore $W_1(\Sigma_s^1,\Sigma_s^2)\leq \mathrm
e^{2Ls}W_1(\bar\Sigma^1,\bar\Sigma^2)$,
which proves \eqref{eq:44}. \qedhere
\end{proof}

\begin{remark}[Another existence proof] \label{N->oo}
  {\rm The following argument can provide an alternative strategy to the construction a
solution starting from the discrete solutions of the previous section.
In fact, one can use the contractivity to pass to the limit in the discrete problem.
Choose $\bar y_i(\omega)\in C$ independent and identically distributed, with law
$\bar\Sigma$, so that the random measures
$\bar\Sigma^N(\omega)\coloneqq\frac1N\sum_{i=1}^N\delta_{\bar y_i(\omega)}$ almost surely converge 
in $\Probabilitiesone{C}$ to $\bar\Sigma$.
We fix $\omega$ such that this happens, set $\bar y_i(\omega)=\bar y_i$ and let $\by(t)=(y_1(t),\ldots,y_N(t))$ be
the discrete evolution starting from $\by(t)=(\bar y_1,\ldots,\bar y_N)$. 
Then, by contractivity,  $\Sigma^N_t\coloneqq \frac1N\sum_{i=1}^N\delta_{y_i(t)}$ converges weakly, and it
is not hard to prove that $\Sigma_t=\lim_{N\to\infty}\Sigma_t^N$ provides a solution.
}\end{remark}

\section{Uniqueness of Eulerian solutions}\label{sec:euuni}

In this section we address the uniqueness of Eulerian solutions, according to \eqref{eq:32}. Our first proof uses
a classical duality argument, adapted to the infinite-dimensional space of measures and to the special structure
\begin{equation}\label{eq:recall}
b_\Sigma(t,c)=b_{\Sigma_t}(c)=\int_C f(c,c')\,\de\Sigma_t(c')
\end{equation}
of the vector field, with $f=(f_x,f_\sigma)$ as in
\eqref{eq:21}, \eqref{eq:21bis}. One of the advantages of the duality proof is that it provides uniqueness
in the larger class of signed measures; the drawback is that, since we don't have at our disposal the mollification
schemes of the finite-dimensional setting, we have to require $\rmC^1$ regularity in place of Lipschitz regularity
with respect to the $x$ variable of $e$ and $J$. We use the special structure of the vector field,
together with \eqref{eq:34x}, also to make use of the flow map $\YY(s,t,x)$ backward in time, i.e. for times $s\leq t$; 
indeed, \eqref{eq:34x} yields
that the abstract compatibility condition \eqref{eq:34} holds also for $-b$, whose forward solutions correspond to backward solutions for $b$.

In this section we shall apply the abstract calculus tools of Sections~\ref{sec:diff} and \ref{sec:regularity} with $C=\R^d\times\Probabilities{U}$,
$E=Y=\R^d\times F(U)$, the subspace $E_C=Y_C=\R^d\times\Measures_0(U)$ and its closure
$\overline{E}_C=\R^d\times\{\sigma\in F(U): \sigma(1)=0\}$.}

\subsection{Uniqueness by duality}

\begin{theorem}\label{thm:uni} 
Suppose that $J\colon(\R^d\times U)^2\to \R$ and $e\colon\R^d\times U\to\R$ are Lipschitz maps,
with $J(\cdot,u,x',u')$ of class $\rmC^1$ for all $(u,x',u')\in U\times\R^d\times U$, and 
$e(\cdot,u)$ of class $\rmC^1$ for all $u\in U$.
Then, for all $\bar\Sigma\in\Measures (C)$ with $\int_C |x|\,\de |\bar\Sigma|<+\infty$,
equation \eqref{eq:32} admits a unique solution in the class of weakly continuous maps $t\in [0,T]\mapsto
\Sigma_t\in \Measures (C)$ with $\sup_t\int_C (1+|x|)\,\de |\Sigma_t|<+\infty$ and $\Sigma_0=\bar\Sigma$.
\end{theorem}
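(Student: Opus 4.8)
The plan is to run a duality argument. Let $\Sigma^1,\Sigma^2$ be two solutions in the stated class with the same datum $\bar\Sigma$. Testing \eqref{eq:32} with $\phi\equiv1$ shows that the net mass is conserved, so $\Sigma^1_t-\Sigma^2_t$ has zero mass for every $t$, and the bound $\sup_t\int_C(1+|x|)\,\de|\Sigma^i_t|<+\infty$ guarantees that it pairs with every $1$-Lipschitz function. I would quantify the discrepancy through the distance $\sfd_{\rmC^1}$ of Lemma~\ref{sfdc1}, whose nondegeneracy will eventually force $\Sigma^1_t=\Sigma^2_t$. The idea is to fix a terminal time $t^*\in(0,T]$, transport a test function backward along the characteristics of the \emph{first} solution to build an admissible test function $\phi$ for \eqref{eq:32}, and then exploit the fact that $\Sigma\mapsto b_\Sigma$ is linear to compare the two solutions.

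Concretely, given $\psi\in\rmC^1_b(Y)$ with $\Lip(\psi)\le1$, set $\phi(s,y):=\psi(\bY_{\Sigma^1}(t^*,s,y))$. The field $-b_{\Sigma^1}$ satisfies the compatibility condition \eqref{eq:34} as well — this is the two-sided content of \eqref{eq:34x} — so the flow $\bY_{\Sigma^1}(t^*,s,\cdot)$ is well defined and maps $C$ into $C$ also for $s\le t^*$; by construction $\phi(t^*,\cdot)=\psi$ and $\phi$ solves the backward transport equation $\partial_s\phi+\mathrm D\phi\,(b_{\Sigma^1_s})=0$ pointwise. Plugging $\phi$ into \eqref{eq:32} for each of $\Sigma^1$ and $\Sigma^2$, the contribution built from $b_{\Sigma^1}$ cancels for the first solution, and subtracting the two identities yields the key formula
\[
\int_C\psi\,\de(\Sigma^1_{t^*}-\Sigma^2_{t^*})
=-\int_0^{t^*}\!\!\int_C \mathrm D\phi(s,y)\bigl(b_{\Sigma^2_s}(y)-b_{\Sigma^1_s}(y)\bigr)\,\de\Sigma^2_s(y)\,\de s .
\]

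To close the estimate I would write $b_{\Sigma^2_s}(y)-b_{\Sigma^1_s}(y)=\int_C f(y,y')\,\de(\Sigma^2_s-\Sigma^1_s)(y')$, apply Fubini, and observe that for fixed $s$ the function $y'\mapsto\int_C \mathrm D\phi(s,y)(f(y,y'))\,\de\Sigma^2_s(y)$ is again of class $\rmC^1$, with Lipschitz constant bounded by $L\,|\Sigma^2_s|(C)\sup_y\|\mathrm D\phi(s,y)\|$. The operator norm of $\mathrm D\phi(s,\cdot)$ is in turn controlled by $\Lip(\phi(s,\cdot))\le\mathrm e^{L(t^*-s)}$, thanks to the Lipschitz dependence \eqref{eq:40} of the flow on its initial point. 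Taking the supremum over admissible $\psi$ and invoking the $\rmC^1$ duality of Lemma~\ref{sfdc1} (in the moment-weighted form suited to zero-mass signed measures), this produces a Gr\"onwall inequality $\sfd_{\rmC^1}(\Sigma^1_{t^*},\Sigma^2_{t^*})\le C\int_0^{t^*}\sfd_{\rmC^1}(\Sigma^1_s,\Sigma^2_s)\,\de s$; since the data coincide at $t=0$, uniqueness follows.

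The main obstacle is the second step, namely producing $\phi$ as a genuine $\rmC^1_b$ test function: this requires the flow map $y\mapsto\bY_{\Sigma^1}(t^*,s,y)$ to be Fr\'echet differentiable in the initial datum with a bounded, Lipschitz differential, so that $\mathrm D\phi$ exists and the chain rule may be used inside \eqref{eq:32}. This is precisely where the hypotheses that $e(\cdot,u)$ and $J(\cdot,u,x',u')$ be of class $\rmC^1$ in $x$ enter, upgrading the mere Lipschitz regularity of $b_{\Sigma^1}$ to differentiability; it is handled by the abstract differential calculus and flow-regularity results of Sections~\ref{sec:diff} and~\ref{sec:regularity}, working in the tangent structure $E_C=\R^d\times\Measures_0(U)$ isolated before the statement. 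A secondary technical point, to be verified separately, is the validity of the $\rmC^1$-duality estimate for signed measures of bounded total variation with only a first-moment bound, i.e. the extension of Lemma~\ref{sfdc1} needed to pass from the pointwise identity to the Gr\"onwall inequality.
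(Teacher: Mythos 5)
Your proposal is correct and is, at its core, the same duality argument the paper uses for Theorem~\ref{thm:uni}: transport test functions backward along the flow of the \emph{first} solution (using \eqref{eq:34x} to run the flow for $s\le t$), exploit the linearity of $\Sigma\mapsto b_\Sigma$ so that only the difference $\Sigma^1_s-\Sigma^2_s$ survives, invoke the $\rmC^1$ dependence of the flow on its initial point from Theorem~\ref{thm:extraA} (which is exactly where the $\rmC^1$ hypotheses on $e(\cdot,u)$ and $J(\cdot,u,x',u')$ enter, just as you say), and quantify the discrepancy with $\sfd_{\rmC^1}$ from Lemma~\ref{sfdc1}. Where you differ is in the execution of the dual problem, and your variant is actually leaner. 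The paper solves the inhomogeneous backward equation $\partial_t g_t+\mathrm D g_t(b_{\Sigma^1_t})=r_t$, $g_h=0$, for a whole Borel-in-time family $r\in\mathcal R$ of $1$-Lipschitz $\rmC^1_b$ test functions; to identify $\sup_{r}\int_0^h\int_C r_t\,\de\Sigma_t\,\de t$ with $\int_0^h\sfd_{\rmC^1}(\Sigma^1_t,\Sigma^2_t)\,\de t$ it then needs a measurable selection theorem, and it concludes by absorbing the right-hand side for $h$ small and iterating finitely many times. Your homogeneous dual equation with a single terminal datum $\psi$ at a single time $t^*$ produces, after taking the supremum over $\psi$, the inequality $\sfd_{\rmC^1}(\Sigma^1_{t^*},\Sigma^2_{t^*})\le C\int_0^{t^*}\sfd_{\rmC^1}(\Sigma^1_s,\Sigma^2_s)\,\de s$ with $C$ uniform in $t^*$, so a Gr\"onwall argument replaces both the measurable selection and the small-time iteration (you only need Borel measurability and boundedness of $s\mapsto\sfd_{\rmC^1}(\Sigma^1_s,\Sigma^2_s)$, which follow from weak continuity and the uniform moment bound). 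You also work with the abstract kernel $f$ rather than the explicit structure the paper uses (null first component of $b_t$ and the kernel $J\ast\Sigma_t$); the content is the same.

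One inaccuracy to repair: the function $y'\mapsto\int_C\mathrm D\phi(s,y)\bigl(f(y,y')\bigr)\,\de\Sigma^2_s(y)$ is \emph{not} of class $\rmC^1$ in general, because $f(y,\cdot)$ is only Lipschitz in $x'$ --- the theorem assumes $\rmC^1$ smoothness of $J$ only in its first spatial argument. So it is not an admissible competitor in the definition of $\sfd_{\rmC^1}$ as written. This is exactly the ``secondary technical point'' you flag at the end, and it is genuinely needed; note that the paper faces the identical issue in its estimate $|J\ast\Sigma_t(x,u)|\le L_J\,\sfd_{\rmC^1}(\Sigma^1_t,\Sigma^2_t)$ and passes over it silently. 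The fix is the closure argument in the second half of the proof of Lemma~\ref{sfdc1}: bounded Lipschitz functions lie in the pointwise closure of $\rmC^1_b$ Lipschitz functions in a way that preserves the duality inequality, and this extends to zero-mass differences $\Sigma^1_s-\Sigma^2_s$ with uniformly bounded total variation and first moments by dominated convergence (the linear growth of the kernel is harmless after subtracting a constant, since the measures have equal mass and finite first moments). With that verification supplied, your proof is complete.
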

\begin{proof}
Let us consider solutions $\Sigma^1,\Sigma^2$ of \eqref{eq:32} such that $\Sigma^1_0=\Sigma^2_0$ and fix $h>0$. 
Let us denote 
$$
{\cal R}\coloneqq \left\{r\colon[0,h]\to\rmC_b(Y)\colon \text{$r$ is Borel, $r_t\in\rmC^1_b(Y)$, $\mathrm{Lip}(r_t)\leq 1$
for all $t\in [0,h]$}\right\},
$$ 
with the usual notation $r_t(y)=r(t,y)$.

The difference $\Sigma_t\coloneqq\Sigma_t^1-\Sigma^2_t$ solves
\begin{equation}\label{118}
\frac{\de}{\de t}\Sigma_t+{\rm div}(b_{\Sigma_t^1}\Sigma_t)=-{\rm div}(b_t\Sigma^2_t)
\end{equation}
in the weak sense of \eqref{eq:32}, with $b_t\coloneqq b_{\Sigma_t^1}-b_{\Sigma^2_t}$.
Let us stress that $b_t$ has null first component, since it is the difference of the vector fields 
$b_{\Sigma_t^1}$ and $b_{\Sigma^2_t}$ which have the {\it same} 
first component (recall the notation \eqref{astnotation}):
$$
b_{\Sigma_t^i} (x,\sigma)= \bigg(\int_U e(x,u) \,\de\sigma(u), 
\Big( J\ast\Sigma_t^i(x,\cdot) - \int_U J\ast\Sigma_t^i(x,u) \,\de\sigma(u)\Big)\sigma\bigg), \quad i=1,2.
$$
By linearity of the second components with respect to $\Sigma$ we obtain the representation
$$
b_t (x,\sigma)= \bigg(0, \Big( J\ast\Sigma_t(x,\cdot) - \int_U J\ast\Sigma_t(x,u) {\crd  \, \mathrm{d}}\sigma(u)\Big)\sigma\bigg).
$$

We consider a bounded solution $g\in\rmC^1([0,h]\times C)$ of the backward transport equation with velocity 
field $b_{\Sigma_t^1}$, right-hand side $r_t$, and terminal condition $g_h=0$ (with the usual
notation $g_s(c)=g(s,c)$):
\begin{equation}\label{111}
\begin{cases}
\partial_t g_t+\mathrm D g_t (b_{\Sigma_t^1})=r_t & \text{in $[0,h]\times C$}, \\
g_h=0 & \text{in $C$}.
\end{cases}
\end{equation}
We stress now that the differentiations acting on $g_t$ with respect to $c$ are all meant in the Fr\'echet sense 
(see Definition~\ref{def:FD}) and that the pairing 
$\mathrm D g_t (b_{\Sigma_t^1})$ corresponds to the directional derivative of $g_t$ along the vector $b_{\Sigma_t^1} \in Y_C$.
By the classical method of characteristics, one can construct a solution $g$ of \eqref{111} with the required $\rmC^1$ regularity
property by setting:
\begin{equation}\label{116}
g_t(c)=-\int_t^h r_s(\YY_{\Sigma^1}(s,t,c)) \,\de s.
\end{equation}
Indeed, the $\rmC^1_b(Y)$ regularity of $r_t$, the $\rmC^1$ regularity of $\YY_{\Sigma^1}(s,t,\cdot)$ 
granted by Theorem~\ref{thm:extraA} (and the arguments below, see before formula \eqref{eq:superextra}), and 
Theorem~\ref{chainrule} yield the $\rmC^1$ regularity of the function $g$ in \eqref{116}, together with the
exchange of Fr\'echet differentiation with integration (the latter granted by \eqref{exdiffint}).
Obviously $g_h=0$ and one can check that $g$ satisfies \eqref{111} with the following observation: since, 
{thanks to \eqref{eq:chain1}}, 
$$
\frac{\de}{\de t}g(t,c(t))=\bigl(\partial_t g_t+\mathrm D g_t (b_{\Sigma_t^1})\bigr)(t,c(t))
$$
along {\it any} $\rmC^1$ solution $c(t)$ of the ODE 
$$
\frac{\de}{\de t}c(t)=b_{\Sigma_t^1}(c(t)),
$$
if for any $c_0\in C$ and $t_0\in (0,h)$ we are able to find a $\rmC^1$
solution $c(t)$ to the ODE above with $c(t_0)=c_0$ and $\frac{\de}{\de t}g(t,c(t))=r_t(c(t))$ at $t=t_0$, we are done. Choosing
$c(t)=\YY_{\Sigma^1}(t,0,d)$ for some $d\in C$, from the semigroup property we get
$$
g(t,c(t))=-\int_t^hr_s(\YY_{\Sigma^1}(s,0,d))\,\de s
$$
so that we are able to check \eqref{111} at any $(t_0,c_0)$ with $c_0=\YY_{\Sigma^1}(t_0,0,d)$. Choosing 
$d=\YY_{\Sigma^1}(0,t_0,c_0)$ (only at this point we are using the flow backwards in time) we obtain the
global validity of \eqref{111}.

As mentioned above, the $\rmC^1$ regularity of $\YY_{\Sigma^1}(s,t,\cdot)$ follows from
Theorem~\ref{thm:extraA}, if we check that
\begin{equation}\label{eq:superextra}
\text{$(t,c)\mapsto\mathrm D_{Y_C}A(t,\cdot)(c)$ is continuous from $[0,T]\times C$ to $\mathcal L(Y_C,\overline{Y}_C)$}
\end{equation}
in the sense of \eqref{extraA}, with
$$
A(t,c)=b_{\Sigma^1_t}(c),
$$
and we recall Proposition~\ref{pv5}.
It is at this stage that we need the extra $\rmC^1$ assumption on $J(\cdot,u,x',u')$  and $e(\cdot,u)$. Indeed, thanks to the representation
\eqref{eq:recall} of $b_{\Sigma^1}$, it is
sufficent to check $\rmC^1$ differentiability of $f(\cdot,c')$ for all $c'=(x',\sigma')$ in the direction
$(v,\theta)\in Y_C$; the partial differential with respect to the $x$ variable
at $c=(x,\sigma)$ is given by
\begin{equation*}
\begin{split}
v\in\mathbb R^d \mapsto\bigg( & \int_U \mathrm D_x e (x,u)(v)\,\de\sigma(u), \\
& \Bigl(\int_U \mathrm D_x J(x,\cdot,x',u')({v)}\,\de\sigma'(u')-\int_{U\times U}\mathrm D_x J(x,w,x',u')
({v)}\,\de\sigma'(u')\de\sigma(w)\Bigr)\sigma\bigg)
\end{split}
\end{equation*}
while a partial differential with respect to the $\sigma$ variable is given by
\begin{equation*}
\begin{split}
\theta \in {\cbl{}\Measures_0(U)}\mapsto\bigg(\int_U e(x,u)\,\de\theta(u), &
\Bigl(\int_U J(x,\cdot,x',u')\,\de\sigma'(u')-\int_{U\times U}J(x,w,x',u')\,\de\sigma'(u')\de\sigma(w)\Bigr)\theta \\
& -\Bigl(\int_{U\times U}J(x,w,x',u')\,\de\sigma'(u')\de\theta(w)\Bigr)\sigma\bigg),
\end{split}
\end{equation*}
so that the differential is continuous from $[0,T]\times C$ to $\mathcal L(Y_C,\overline{Y}_C)$.

Since $r_t$ are $1$-Lipschitz, and the Lipschitz constant of $b_{\Sigma^1_t}$ can be estimated from above by
$L_*=L\sup_t|\Sigma^1_t|(C)$, with $L$ Lipschitz constant of the interaction term $f$ in \eqref{eq:recall}, from
the Lipschitz estimate on $\bY_{\Sigma^1}(t,s,\cdot)$ granted by Theorem~\ref{thm:extraA} we get 
\begin{equation}\label{inisup}
\|\mathrm D g_t(c)\|_{\mathcal L(Y_0,Y)}\leq (h-t)\mathrm e^{L_*T},\qquad\text{for all $(t,c)\in [0,h]\times C$.}
\end{equation}
 
Since $g_t\Sigma_t$ vanishes at $t=0$ and $t=h$, by the Leibniz rule and Theorem~\ref{chainrule} we get
\begin{equation}\label{112}
0=\int_0^h \frac{\de}{\de t}\int_C g_t\,\de\Sigma_t\,\de t=\int_0^h\int_C\bigl(\partial_tg_t+\mathrm D g_t (b_{\Sigma_t^1})\,\de\Sigma_t\,\de t + 
\int_0^h\int_C \mathrm D g_t (b_{\Sigma_t^1})\,\de \Sigma^2_t\,\de t .
\end{equation}
Using \eqref{111}, we obtain that
\begin{equation}\label{firstsup}
\sup_{r\in {\cal R}}\int_0^h\int_C r_t\,\de\Sigma_t\,\de t \leq \int_0^h \int_C |\mathrm Dg_t(b_t)| \,\de |\Sigma^2_t|\,\de t.
\end{equation}

Motivated by this estimate, we work with the distance $\sfd_{{\crd  \mathrm C^1}}$ in $\Measures(C)$ of Lemma~\ref{sfdc1}.
Let us prove now that
\begin{equation}\label{secondsup}
\sup_{r\in {\cal R}}\int_0^h\int_C r_t\,\de \Sigma_t\,\de t=\int_0^h\sfd_{\rmC^1}(\Sigma_t^1,\Sigma_t^2) \,\de t.
\end{equation}
The inequality $\leq$ is obvious, since $r_t$ is an admissible function in the definition of $\sfd_{\rmC^1}$ for
any $t\in [0,h]$. To prove the converse, we apply a measurable selection argument: 
since $Y$ is separable, it is easily seen that
$$
B\coloneqq
\left\{v\in\rmC_b(Y)\colon v\in\rmC^1_b(Y),\,\, \mathrm{Lip}(v)\leq 1\right\}
$$
is a Borel and separable subset of $\rmC_b(Y)$,
and that $t\mapsto\sfd_{\rmC^1}(\Sigma_t,\Sigma_t')$ is a Borel function. Then, 
for $\delta>0$ fixed we consider the set
$$
\Gamma\coloneqq \left\{ (t,v)\in [0,h]\times\rmC_b(Y)\colon
v\in B,\,\,\sfd_{\rmC^1}(\Sigma_t^1,\Sigma^2_t)<\delta+\int_Y v\,\de\Sigma_t
\right\}
$$
which is measurable, thanks to the above-mentioned properties, with respect to the product of the Borel
$\sigma$-algebras. Then, a measurable selection theorem \cite[Theorem~6.9.1]{Bogachev} grants the existence of a Borel selection
map $\bar r\colon[0,h]\to B$, satisfying $(t,\bar r_t)\in\Gamma$ for a.e.~$t\in [0,h]$. Since, by construction,
$\bar r\in {\cal R}$, it follows that
$$
\sup_{r\in {\cal R}}\int_0^h\int_C  r_t\,\de\Sigma_t\,\de t\geq \int_0^h\int_C  \bar r_t\,\de\Sigma_t\,\de t
\geq
-\delta \int_0^h|\Sigma_t|(C)\,\de t+\int_0^h\sfd_{\rmC^1}(\Sigma_t^1,\Sigma_t^2) \,\de t.
$$
Since $\delta>0$ is arbitrary, this proves \eqref{secondsup}.

Combining \eqref{firstsup} and \eqref{secondsup} we obtain
\begin{equation}\label{thirdsup}
\int_0^h\sfd_{\rmC^1}(\Sigma_t^1,\Sigma_t^2) \,\de t\le\int_0^h \int_C |\mathrm Dg_t(b_t)| \,\de|\Sigma^2_t|\,\de t.
\end{equation}
It remains to estimate from above the right hand side in \eqref{thirdsup}. 
Recalling that the norm on $Y$ is given by $\|y\|_Y=\|(x,\sigma)\|_Y = |x| + \|\sigma\|_{\rm BL}$ and using \eqref{inisup}, we have
\begin{eqnarray*}
&&\int_0^h \int_C |\mathrm D g_t (b_t)|\, \de |\Sigma^2_t|\de t  
\leq \int_0^h \int_C \|\mathrm D g_t\|_{\mathcal L(Y_0,Y)} \|b_t(y)\|_Y \,\de|\Sigma^2_t|(y)\de t \\
&\leq& \mathrm e^{L_*T} \int_0^h (h-t) \int_C \Big \| \Big(0, \Big( J\ast\Sigma_t(x,\cdot) - \int_U J\ast\Sigma_t(x,w)\,\de\sigma(w)\Big)\sigma\Big) \Big \|_Y\,\de |\Sigma^2_t|(y)\de t\\
&\leq&  \mathrm e^{L_*T} \int_0^h (h-t) \int_C \Big\| \Big( J\ast\Sigma_t(x,\cdot) - \int_U J\ast\Sigma_t(x,w)\,\de \sigma(w)\Big)\sigma\Big\|_{\rm BL} \,\de |\Sigma^2_t|(y)\de t.
\end{eqnarray*}
Moreover, \eqref{eq:3} gives
$$
\bigg\| \Big( J\ast\Sigma_t(x,\cdot) - \int_U J\ast\Sigma_t(x,w) \,\de \sigma(w)\Big)\sigma\bigg\|_{\rm BL} \leq 2 
\int_U | J\ast\Sigma_t(x,u) |  \,\de\sigma(u),
$$
and
$$
| J\ast\Sigma_t(x,u) | \leq L_J \sfd_{\rmC^1}(\Sigma_t^1,\Sigma_t^2),
$$
so that, with $S=\sup_t|\Sigma_t^2|(C)$, we get
\begin{equation}\label{fourthsup}
\int_0^h \int_C |\mathrm D g_t (b_t)| \,\de|\Sigma^2_t|\,\de t\leq 2S\mathrm e^{L_*T}L_J h\int_0^h
\sfd_{\rmC^1}(\Sigma_t^1,\Sigma_t^2)\,\de t.
\end{equation}
Combining \eqref{thirdsup} and \eqref{fourthsup} we obtain
$$
\int_0^h \sfd_{\rmC^1}(\Sigma_t^1,\Sigma^2_t)\,\de t   
\leq  2S \mathrm e^{L_*T} L_J h \int_0^h \sfd_{\rmC^1}(\Sigma_t^1,\Sigma^2_t)\,\de t. 
$$
For $h>0$ small enough such that $2S \mathrm e^{L_*T} L_J h<1$, one has
$$
\sfd_{\rmC^1}(\Sigma_t^1,\Sigma^2_t) =0, \qquad \text{for all $0\leq t \leq h$,}
$$
so that the the curves $\Sigma^1$ and $\Sigma^2$ coincide in $[0,h]$ and the proof is achieved
by repeating this argument finitely many times.
\end{proof}

\subsection{Uniqueness by superposition}

In this section we prove uniqueness of Eulerian solutions, as defined in Definition~\ref{def:main}, under the same
assumptions of Theorem~\ref{thm:main}, dealing with Lagrangian solutions. 
  In particular, we require the sole Lipschitz continuity
  of $e$ and $J$, not requiring the $\rmC^1$ smoothness
  of $J(\cdot,u,x',u')$ and 
$e(\cdot,u)$ of Theorem~\ref{thm:uni}. 
The proof covers the more general
setting of interaction systems in Banach spaces of Section~\ref{subsecint}, see Theorem~\ref{prob:mainBanach} for the
existence and stability of Lagrangian solutions. 

Our main tool in the proof is the
so-called superposition principle: it allows to lift solutions to the continuity equation to
probability measures on paths, thus recovering an (extended) Lagrangian representation; 
the principle, which remarkably works under no regularity assumption on the vector field,
 has by now many versions, see for instance \cite[Theorem~8.2.1]{AGS2008} in Euclidean
spaces and \cite{Smirnov} in the context of the theory of currents.
Here we consider the case when the state space is a separable Banach space.

\begin{theorem}
Let $(Y,\|\cdot\|_Y)$ be a separable Banach space, let $b\colon (0,T)\times Y\to Y$ be a Borel vector field and 
let $\mu_t\in\Probabilities{Y}$, $t\in [0,T]$, be a continuous curve with
\begin{equation}\label{eq:prix}
\int_0^T \int_Y {\crd  \|b_t\|_Y}\,\de\mu_t\de t<+\infty.
\end{equation}
If
$$\frac{\de}{\de t}\mu_t+\mathrm{div}({\crd  b_t}\mu_t)=0$$
in duality with cylindrical functions $\phi\in\rmC^1_b(Y)$, precisely of the form (here $\langle\cdot,\cdot\rangle$ denotes the
duality map between $Y$ and $Y'$)
$$
\varphi(\langle y,z_1'\rangle,\langle y,z_2'\rangle,\ldots,\langle y,z_N'\rangle)
$$
with $\varphi\in\rmC^1_b(\R^N)$ and $z_1',\ldots,z_N'\in Y'$, then there exists $\eeta\in\Probabilities{\rmC ([0,T];Y)}$
concentrated on absolutely continuous solutions to the ODE $\dot{y}=b_t(y)$
and with $\mathrm{ev}_t(\eeta)=\mu_t$ for all $t\in [0,T]$.
\end{theorem}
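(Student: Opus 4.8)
The plan is to reduce the infinite-dimensional problem to a countable product of lines, where the classical finite-dimensional superposition principle applies, and then to transfer the resulting measure back to $\rmC([0,T];Y)$ using the fact that the $Y$-norm is recovered from countably many linear functionals. First I would fix the sequence $(z_i')_{i\in\N}$ in the unit ball of $Y'$ provided by \eqref{goodyi}, so that $\|y\|_Y=\sup_i\langle y,z_i'\rangle$, and introduce the coordinate maps $T_N(y)=(\langle y,z_1'\rangle,\dots,\langle y,z_N'\rangle)$ and $T_\infty(y)=(\langle y,z_i'\rangle)_{i\in\N}$. The map $T_\infty\colon Y\to\R^\N$ is continuous and injective, and $\R^\N$ is Polish in the product topology. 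Pushing forward, $\nu_t\coloneqq\PushForward{T_\infty}{\mu_t}$ is a narrowly continuous curve in $\Probabilities{\R^\N}$ which, by testing the hypothesis against cylindrical functions $\varphi\circ T_N$ (these pull back to admissible test functions on $Y$), solves $\partial_t\nu_t+\div(\beta_t\nu_t)=0$ in duality with cylindrical $\rmC^1_b$ functions, where $\beta_t=(\langle b_t,z_i'\rangle)_i$ read in the coordinates. The transported integrability \eqref{eq:prix} passes to $\int_0^T\int|\beta_{t,j}|\,\de\nu_t\,\de t\le\int_0^T\int_Y\|b_t\|_Y\,\de\mu_t\,\de t<+\infty$ for each $j$, since $\|z_i'\|_{Y'}\le1$.

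Next I would run the finite-dimensional superposition principle at every level $N$. Setting $\nu_t^N\coloneqq\PushForward{p_N}{\nu_t}$ with $p_N\colon\R^\N\to\R^N$ the projection, the curve $\nu^N$ solves a continuity equation in $\R^N$ with the conditional vector field $\beta_t^N\coloneqq\mathbb E_{\nu_t}[\beta_{t,\le N}\mid p_N]$, and \cite[Theorem~8.2.1]{AGS2008} yields $\eta^N\in\Probabilities{\rmC([0,T];\R^N)}$ concentrated on absolutely continuous solutions of $\dot\xi=\beta_t^N(\xi)$ with $(\mathrm{ev}_t)_\#\eta^N=\nu_t^N$. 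I would then view each $\eta^N$ as a measure on $\rmC([0,T];\R^\N)$ (freezing the coordinates of index $>N$ to $0$) and prove tightness of $\{\eta^N\}$: by the product structure it suffices to check, for each fixed $M$, tightness of the first-$M$-coordinate marginals, which follows from the standard path-space criterion (fixed tight time-marginals plus a uniform modulus of continuity in probability) because their time-marginals are the fixed measures $\nu_t^M$ and because the uniform bound $\int_0^T\int|\beta_{t,\le M}^N|\,\de\nu_t^N\,\de t\le\sqrt M\,\int_0^T\int_Y\|b_t\|_Y\,\de\mu_t\,\de t$ (uniform in $N$ by Jensen) controls $|\xi_j(t)-\xi_j(s)|\le\int_s^t|\beta^N_{\tau,j}|\,\de\tau$. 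A narrowly convergent subsequence produces a limit $\hat\eta\in\Probabilities{\rmC([0,T];\R^\N)}$.

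I would then identify $\hat\eta$. Its finite-coordinate time-marginals are $\lim_N\nu_t^{M}=\nu_t$ for every $M$, hence $(\mathrm{ev}_t)_\#\hat\eta=\nu_t$ for all $t$. The concentration of $\hat\eta$ on solutions of the limiting ODE $\dot\xi=\beta_t(\xi)$ is the technical heart of the argument: one must pass to the limit in the integrated relation $\xi_j(t)-\xi_j(s)=\int_s^t\beta_{\tau,j}^N(\xi)\,\de\tau$, combining the narrow convergence $\eta^N\to\hat\eta$ with the $L^1(\nu_t)$-convergence $\beta^N\to\beta$ (a martingale convergence as $N\to\infty$) and the uniform integrability granted by \eqref{eq:prix}, exactly as in the closedness step of the finite-dimensional superposition principle. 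This is the step I expect to be the main obstacle, since only $L^1$ integrability of the velocity is available and the approximating fields converge merely in $L^1$; the uniform-integrability and lower-semicontinuity bookkeeping must be carried out carefully, coordinate by coordinate, along a dense countable set of times.

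Finally I would transfer $\hat\eta$ back to $Y$. Since $(\mathrm{ev}_t)_\#\hat\eta=\nu_t=\PushForward{T_\infty}{\mu_t}$ is concentrated on $T_\infty(Y)$ for each $t$, for $\hat\eta$-a.e.\ path $\xi$ one has $\xi(t)\in T_\infty(Y)$ for all rational $t$; the relation $\|y_1-y_2\|_Y=\sup_i|\langle y_1,z_i'\rangle-\langle y_2,z_i'\rangle|$ coming from \eqref{goodyi}, together with the uniform bound $\sup_i|\xi_i(t)-\xi_i(s)|\le\int_s^t\|b_\tau\|_Y\circ T_\infty^{-1}\,\de\tau$, shows that $\xi$ is uniformly continuous for the pullback distance $\rho(\xi,\xi')=\sup_i|\xi_i-\xi_i'|$, under which $T_\infty$ is an isometry and $T_\infty(Y)$ is complete; hence $\xi(t)\in T_\infty(Y)$ for all $t$ and $t\mapsto T_\infty^{-1}(\xi(t))$ is absolutely continuous in $Y$. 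Letting $\Theta\colon\xi\mapsto T_\infty^{-1}\circ\xi$ and setting $\eeta\coloneqq\PushForward{\Theta}{\hat\eta}$ gives $\eeta\in\Probabilities{\rmC([0,T];Y)}$ with $(\mathrm{ev}_t)_\#\eeta=\mu_t$. For a.e.\ path, $\int_0^T\|b_t(y(t))\|_Y\,\de t<+\infty$ by \eqref{eq:prix} and Fubini, and the coordinatewise identities $\langle\dot y(t),z_i'\rangle=\langle b_t(y(t)),z_i'\rangle$ combined with the separation property of $(z_i')$ upgrade to $\dot y(t)=b_t(y(t))$ in $Y$ for a.e.\ $t$, so $\eeta$ is concentrated on absolutely continuous solutions of $\dot y=b_t(y)$, as required.
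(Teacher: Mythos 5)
Your overall skeleton coincides with the paper's: embed $Y$ isometrically into a sequence space via the functionals $(z_i')$ of \eqref{goodyi}, obtain a superposition result there, and transfer back using the isometry, the completeness of the image (checked first at rational times, then at all times via absolute continuity), and the fact that the $z_i'$ separate points. The substantive difference is that the paper does not re-derive the superposition principle in $\R^\infty$: it quotes it directly from \cite[Theorem 7.1]{AT2014} (a result itself proved by finite-dimensional approximation) and then only has to handle the strengthened integrability and the transfer. Your plan reconstructs that result from \cite[Theorem~8.2.1]{AGS2008} by projecting onto the first $N$ coordinates with conditional-expectation velocities, proving tightness, and identifying the limit; this is exactly where your proposal has genuine gaps.

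First, tightness. You assert a uniform modulus of continuity in probability from the bound $\int_0^T\int|\beta^N_{t,\le M}|\,\de\nu^N_t\,\de t\le \sqrt M\, C$. A uniform $L^1$ bound on velocities does \emph{not} give tightness in $\rmC([0,T];\R^M)$: deterministic paths climbing from $0$ to $1$ on an interval of width $1/N$ have velocity integral bounded by $1$ uniformly in $N$, yet converge to a jump. What saves your construction is a fact you never invoke: the fields $\beta^N$ are conditional expectations of one fixed function in $L^1(\nu_t\otimes\de t)$, hence uniformly integrable; by de la Vall\'ee-Poussin one finds a superlinear convex $\Psi$ with $\int_0^T\int\Psi(|\beta_{t,\le M}|)\,\de\nu_t\,\de t<\infty$, and Jensen's inequality makes this bound uniform in $N$. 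Such superlinear control is what yields equicontinuity in probability, and it is the same device the paper uses (there via Dunford--Pettis, upgrading \eqref{eq:prix} to \eqref{eq:prix11} and \eqref{eq:prix2}) to conclude that the lifted measure is concentrated on curves absolutely continuous for the sup-norm. Second, the limit identification, which you yourself call ``the main obstacle'', is left as a sketch; it is not routine bookkeeping but precisely the content of the theorem the paper cites. Passing to the limit in $\xi_j(t)-\xi_j(s)=\int_s^t\beta^N_{\tau,j}(\xi)\,\de\tau$ with velocities converging only in $L^1$ and path measures converging only narrowly requires approximating $\beta$ in $L^1(\nu_t\otimes\de t)$ by bounded continuous fields and controlling the substitution error uniformly in $N$, again through the uniform integrability above. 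As written, your proposal re-opens the hardest part of the argument without closing it: either carry out this limit in full (essentially reproving \cite[Theorem 7.1]{AT2014}) or cite that result. Granting that, your transfer-back step is sound and is a mild variant of the paper's: you obtain sup-norm absolute continuity of a.e.\ path from \eqref{eq:prix} by a pathwise Fubini argument, where the paper obtains it from the $\Psi$-enhanced integrability; both are correct, though you should also address, as the paper does via \cite[Theorem~6.8.6]{Bogachev}, the Borel measurability of $T_\infty^{-1}$ and of the induced map on path space before pushing $\hat\eta$ forward.
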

\begin{proof}
We start from the version of the superposition principle in the space $\R^\infty$ of all 
sequences $(x_i)$, $i\geq 1$, proved in \cite[Theorem 7.1]{AT2014} by finite-dimensional approximation.
We can endow $\R^\infty$ with the distance $\sfd_\infty(x,y)\coloneqq 2^{-n}\min\{1,|x_n-y_n|\}$, which makes it a complete and separable metric space. 
If $\cc\colon(0,T)\times\R^\infty\to\R^\infty$ is a vector field, with components $c^i$ measurable with the respect to the product of the Borel
$\sigma$-algebras in the domain, and if
$\nu_t\in\Probabilities{\R^\infty}$ satisfy $\int_0^T\int_{\R^\infty} |c^i_t|\,\de\nu_t \,\de t<+\infty$ 
for any $i$ and solve the continuity equation
$$
\frac\de {\de t}\nu_t+\mathrm{div}(\cc_t\nu_t)=0
$$
in duality with $\rmC^1_b$ cylindrical function $\phi$ (i.e., dependent on finitely many coordinates $x_i$), there exists
$\ssigma\in {\Probabilities{\mathrm{C}([0,T];\R^\infty)}}$ such that:
\begin{itemize}
\item[(1)] $\ssigma$ is concentrated on continuous curves $\gamma\colon[0,T]\to\R^\infty$, with absolutely continuous 
components $\gamma^i$ solving the infinite system of ODE $\dot{\gamma}^i=c^i_t(\gamma)$, $i\geq 1$;
\item[(2)] $\mathrm{ev}_t(\ssigma)=\nu_t$ for all $t\in [0,T]$. 
\end{itemize}
Given this basic result, if we strengthen the integrability assumption on $\cc$, by requiring
\begin{equation}\label{eq:prix2}
\int_0^T\int_{\R^\infty} \Psi(\sup_i|c_t^i|)\,\de\nu_t \,\de t<+\infty
\end{equation}
with $\Psi$ a non-decreasing function such that $\Psi(z)/z\to +\infty$ as $z\to\infty$, 
then it is immediately seen that $\ssigma$ is concentrated on a Borel
set $\Theta$ (i.e.~$\ssigma(\mathrm{C}([0,T];\R^\infty)\setminus\Theta)=0$) \nc
made of 
curves $\gamma$ absolutely continuous with respect to the
norm $\|\cdot\|_\infty$, more precisely one has
$$
\sup_i|\gamma^i_s-\gamma^i_t|\leq \int_s^t
\sup_i|c^i_\tau|\circ\gamma_\tau\,\de\tau,\qquad \text{for all $0\leq s\leq t\leq T$ and all $\gamma\in\Theta$,}
$$
$$
\int_0^T\Psi(\sup_i|c^i_\tau|\circ\gamma_\tau)\,\de\tau<+\infty,\qquad \text{for all $\gamma\in\Theta$.}
$$

Now we turn to the case of a separable Banach space $Y$. Thanks to Dunford-Pettis theorem,
applied to the space-time measure $\mu_t \de t$, we can find a non-decreasing function $\Psi\colon [0,+\infty)\to [0,+\infty)$
with $\Psi(z)/z\to +\infty$ as $z\to +\infty$ such that \eqref{eq:prix} improves to
\begin{equation}\label{eq:prix11}
\int_0^T \int_Y \Psi({\crd  \|b_t \|_Y})\,\de\mu_t\de t<+\infty.
\end{equation}

 Let $(z_i')\subset Y'$ as in the proof of {Lemma~\ref{sfdc1}}
and
let us consider the map $E\colon Y\to\ell_\infty\subset\R^\infty$ defined by
$$E(y)\coloneqq (\langle y,z_1'\rangle,\langle y,z_2'\rangle,\ldots).$$
It is immediately seen {from \eqref{goodyi}}
 that the mapping $E$ is an isometry of $Y$ into $\ell_\infty$, hence, $E(Y)$ is closed and separable in $\ell_\infty$;
in addition, \cite[Theorem~6.8.6]{Bogachev} grants that $E$ maps Borel sets of $Y$ into Borel sets of $\R^\infty$, in particular
$E(Y)$ is a Borel set of $\R^\infty$. As a consequence, 
$E^{-1}$ extended to $0$ out of $E(Y)$ is $\mu$-measurable for any $\mu\in\Probabilities{\R^\infty}$.

Let us consider the measures $\nu_t=\PushForward{E}{\mu_t}\in\Probabilities{\R^\infty}$; 
if we define
$$
c^i_t\coloneqq \langle {\crd  b_t},z_i'\rangle\circ E^{-1}
\qquad\text{on $E(Y)$, for $i\geq 1$}
$$
(and equal to $0$ on $\R^\infty\setminus E(Y)$, this extension is irrelevant since the measures $\nu_t$
are concentrated on $E(Y)$) we obtain by construction that the continuity equation holds, with the stronger
integrability condition \eqref{eq:prix2} coming from
  \eqref{eq:prix11},
since $\sup_i|c^i_t|\le \|b_t\|_Y$ (recall that $\Psi$ is non-decreasing).

Then, from the superposition theorem in $\R^\infty$ we obtain a
probability measure\break
 $\ssigma \in
 \Probabilities{\mathrm{C}([0,T]; \R^\infty)}$ 
concentrated on solutions of the ODE $\dot{\gamma}^i_t=c^i_t(\gamma)$ and absolutely
continuous with respect to the norm $\|\cdot\|_\infty$. 
Moreover, since $\nu_t$ are concentrated on $E(Y)$ 
we obtain that, for any $t\in [0,T]$, $\gamma_t\in E(Y)$ for
$\ssigma$-a.e. $\gamma$. In particular, 
restricting $t$ to a countable set, for $\ssigma$-a.e. $\gamma$, one has
\begin{equation}\label{eq:prix1}
\text{ $\gamma_t\in E(Y)$ for any $t\in \Q\cap [0,T]$.}
\end{equation}
{If we fix $\gamma\in\Theta$ (so that $\gamma$ is absolutely continuous with respect to
the $\|\cdot\|_\infty$ norm) and \eqref{eq:prix1} holds, since our choice of the $z_i'$ guarantees that the composition with $E^{-1}$ 
is an isometry between the norm $\|\cdot\|_\infty$ and $\|\cdot\|_Y$, from $\gamma_t\in E(Y)$ for any $t\in\Q\cap [0,T]$
we deduce $\gamma([0,T])\subset E(Y)$: indeed, if $t_n\in \Q\cap [0,T]$ and $t_n\to t$, $\gamma_{t_n}=E(y_n)$,
then the absolute continuity of $\gamma$ yields that $(y_n)$ is a Cauchy sequence in $Y$, hence $y_n\to y$ for some $y\in Y$ and
then $\gamma_t=E(y)\in E(Y)$.}
Therefore, we proved that for all $\gamma\in\Theta$ the property \eqref{eq:prix1} improves to
$\gamma([0,T])\subset E(Y)$, and the same argument shows that
the transformed curve $E^{-1}\gamma$ in $Y$ is absolutely continuous.

The ODE $\dot{\gamma}^i_t=c^i_t(\gamma)$ becomes, for the transformed curve $y_t=E^{-1}\gamma_t$,
$$
\frac{\de}{\de t}\langle y,z_i'\rangle=\langle b_t\circ y,z_i'\rangle, \qquad\text{for all $i\geq 1$,}
$$
that, since the $z_i'$ separate points in $Y'$, allows to conclude that $\dot{y}=b_t\circ y$. 

Let us now consider the map 
$\tilde E\colon \rmC([0,T];Y)\to \rmC([0,T];\R^\infty)$, $\tilde E(\gamma)\coloneqq E\circ\gamma$,
which naturally extends $E$ to the corresponding complete and
separable spaces of curves. Since $\tilde E$ is continuous and
injective, it maps Borel sets into Borel sets.
It follows that the inverse map
$E_*$, $\gamma\mapsto y=\tilde E^{-1}\gamma$, arbitrarily defined 
to a constant on the
Borel and $\ssigma$-negligible set 
$$\rmC([0,T];\R^\infty)\setminus \tilde E\big(\rmC([0,T];Y)\big)
\subset \rmC([0,T];\R^\infty)\setminus \Theta,$$ 
is Borel.

To conclude, having set $\eeta=(E_*)_\# \ssigma$, from $\mathrm{ev}_t(\ssigma)=\nu_t$
one obtains $\mathrm{ev}_t(\eeta)=\mu_t$ for all $t\in [0,T]$. 
The measure $\eeta$ then satisfies all stated properties.\qedhere
\end{proof}

\begin{theorem}\label{thm:uni2}
Let $\bar\Sigma\in\Probabilitiesone{C}$, $f\colon C\times C\to Y$ be $L$-Lipschitz, and let $b_{\Sigma}$ be defined as
in \eqref{eq:recall}. Then there is a unique $\Sigma\in\rmC([0,T];\Probabilitiesone{C})$ with $\Sigma_0=\bar\Sigma$, satisfying
$$
\int_C \phi(t,y)\,\de\Sigma_t(y)- \int_C \phi(0,y)\,\de\Sigma_0(y)= \int_0^t \int_C \Big(\partial_s \phi(s,y)+
\mathrm D\phi(s,y)(b_{\Sigma_s}(y))\Big)\,\de\Sigma_s(y) \de s
$$
for all $\phi\in\rmC^1_b([0,T]\times Y)$.
\end{theorem}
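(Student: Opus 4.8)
The plan is to reduce the uniqueness of Eulerian solutions to the already-established uniqueness of Lagrangian solutions (Theorem~\ref{prob:mainBanach}) by means of the superposition principle just proved. The starting observation is that, for a \emph{fixed} Eulerian solution $\Sigma\in\rmC([0,T];\Probabilitiesone{C})$, the vector field $b_{\Sigma_t}$ defined in \eqref{eq:recall} becomes a prescribed Borel time-dependent field, and the weak formulation in the statement is precisely the continuity equation $\partial_t\Sigma_t+\div(b_{\Sigma_t}\Sigma_t)=0$ tested against cylindrical $\rmC^1_b$ functions. Thus $\Sigma$ itself is a solution of a \emph{linear} continuity equation, and the superposition theorem applies to it.

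First I would verify the integrability hypothesis \eqref{eq:prix}. By Proposition~\ref{pv5}(i), $\|b_{\Sigma_t}(y)\|_Y\le \|f(y_0,y_0)\|+L\|y-y_0\|+L\int_C\|y'-y_0\|\,\de\Sigma_t(y')$ for any fixed $y_0$; since $t\mapsto\Sigma_t$ is $W_1$-continuous with values in $\Probabilitiesone C$, the first moments $\int_C\|y-y_0\|\,\de\Sigma_t(y)$ are bounded on $[0,T]$, and integrating this bound against $\Sigma_t\,\de t$ gives $\int_0^T\int_C\|b_{\Sigma_t}\|_Y\,\de\Sigma_t\,\de t<+\infty$. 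The superposition theorem then produces $\eeta\in\Probabilities{\rmC([0,T];Y)}$ concentrated on absolutely continuous solutions of $\dot y_t=b_{\Sigma_t}(y_t)$ with $\mathrm{ev}_t(\eeta)=\Sigma_t$ for all $t$. Because each $\Sigma_t$ is concentrated on the closed set $C$, restricting to a countable dense set of times and using continuity of the paths together with closedness of $C$ shows that $\eeta$-a.e.\ path stays in $C$; these are therefore genuine $C$-valued solutions of the ODE \eqref{eq:30}.

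The decisive step is the rigidity coming from the Lipschitz character of the field. By Proposition~\ref{pv5}(ii) the map $y\mapsto b_{\Sigma_t}(y)$ is $L$-Lipschitz, uniformly in $t$, so the Cauchy problem \eqref{eq:30} has a \emph{unique} $C$-valued solution for each initial datum, namely $t\mapsto\bY_\Sigma(t,0,y_0)$. Consequently every path in the support of $\eeta$ is determined by its initial value, i.e.\ $\eeta$ is concentrated on the graph $\{t\mapsto\bY_\Sigma(t,0,y_0):y_0\in C\}$ and hence equals the push-forward of $\bar\Sigma=\mathrm{ev}_0(\eeta)$ under the map $y_0\mapsto\bY_\Sigma(\cdot,0,y_0)$. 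Evaluating at time $t$ yields $\Sigma_t=\mathrm{ev}_t(\eeta)=\PushForward{\bY_\Sigma(t,0,\cdot)}{\bar\Sigma}$, which is exactly the Lagrangian identity \eqref{pv4}. Thus any Eulerian solution is a Lagrangian solution, and by the uniqueness of the fixed point of $\mathcal T$ established in Theorem~\ref{prob:mainBanach} there is exactly one such $\Sigma$; existence being already granted by the same theorem, the proof is complete.

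I expect the main obstacle to be the careful bookkeeping in the two measure-theoretic reductions: confirming that the abstract superposition output, a priori living in $\rmC([0,T];Y)$, is concentrated on $C$-valued curves (so that both the compatibility condition \eqref{eq:34} and the ODE uniqueness of Proposition~\ref{pv5} can be invoked), and justifying that the cylindrical test-function formulation of the continuity equation used by the superposition theorem is equivalent to the Fr\'echet-differential formulation in the statement. The Lipschitz-to-rigidity argument itself is short once these points are settled.
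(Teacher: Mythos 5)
Your proposal is correct and takes essentially the same route as the paper's proof: apply the superposition principle to the Eulerian solution with its own frozen vector field $b_{\Sigma_t}$, use the Lipschitz (Brezis-type) uniqueness of the Cauchy problem \eqref{eq:30} to conclude that the lifted measure is carried by the flow trajectories, deduce that every Eulerian solution satisfies the Lagrangian identity \eqref{pv4}, and conclude from Theorem~\ref{prob:mainBanach}. The only cosmetic difference is that the paper compares two Eulerian solutions and finishes with the stability estimate \eqref{eq:stability}, while you finish with the uniqueness of the fixed point of $\mathcal T$ --- the same fact from the same theorem; your explicit treatment of the concentration on $C$-valued curves and of the integrability hypothesis \eqref{eq:prix} fills in details the paper leaves implicit.
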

\begin{proof}
Let us consider Eulerian solutions $\Sigma^1$ and $\Sigma^2$ starting from the same initial datum $\bar\Sigma$ and let us
denote $b^1(t,y)$ e $b^2(t,y)$ the respective velocity fields and $\bY^1(t,y)$, $\bY^2(t,y)$ be the respective flow maps. By applying the
superposition theorem to $b^i$ (extended with the 0 value to $(0,T)\times (Y\setminus C)$), 
$\Sigma^i$ we obtain that $\Sigma^i_t=\mathrm{ev}_t(\eeta^i)$ for suitable
$\eeta^i\in\Probabilities{\rmC([0,T];Y)}$ concentrated on absolutely continuous in $[0,T]$ solutions
to the Cauchy problem
$$
\dot{y}=\bb_{\Sigma^i}(t,y)\quad \text{in $(0,T)$.}
$$
On the other hand, since from Theorem~\ref{thm:Brezis} we know that the solution to the Cauchy problem 
is unique, the conditional probabilities $\eeta^i_y$ of $\eeta^i$ given the initial condition $y(0)=y$ have to be Dirac
masses, precisely $\eeta^i_y=\delta_{\bY^i(\cdot,y)}$.
It follows that $\Sigma^i_t=\PushForward{\bY^i(t,\cdot)}{\bar\Sigma}$ for all $t\in [0,T]$. An application of the stability estimate \eqref{eq:stability} in Theorem \ref{prob:mainBanach} {\crd  yields uniqueness}. \qedhere
\end{proof}

\appendix
\section{Calculus in Banach spaces} \label{app:calc}
We adapt  some basic calculus notions in Banach spaces to our framework, where the
domain $C$ of the functions we wish to differentiate is a convex subset of a normed space $E$ but need not be open; in this
case we denote by $E_C$ the vector space $\R(C-C)$. For $c\in C$, we shall instead denote by $E_c$ the convex cone of
directions
$$
E_c\coloneqq\R_+ (C-c)\subset E_C.
$$

\subsection{Differentiation}\label{sec:diff}
We first introduce a notion of multivalued Fr\'echet
differential, adapted to functions defined on convex sets.

\begin{definition}[Multivalued \Frechet differential]\label{def:FD}
Let $E, \, F$ be normed vector spaces, $C\subset E$ convex, and let $f\colon C \to F$ be a map.  
We say that $f$ is \Frechet differentiable at $c\in C$ if there exists 
$L \in \mathcal L(E_C,F)$ such that
\begin{equation}\label{eq:frechet}
\lim_{C\ni c'\to c} \frac{\|f(c')- f(c) - L(c'-c)\|_F}{\|c'-c\|_E}=0.
\end{equation}
\end{definition}
This notion is too strong for some applications, since the natural domain of $L$ should be only the closure of the
cone $E_{c}$, on which $L$ is uniquely determined by
\eqref{eq:frechet}. 
We denote the F-differential of $f$ in $c \in C$
$$
\mathrm Df(c) = \{ L \in \mathcal L(E_C,F): L \mbox{ fulfills } \eqref{eq:frechet} \},
$$
and, if $E_c$ is not dense in $E_C$, the map $\mathrm D$ is multivalued (as in the case of
the subdifferential in convex analysis). 
By density, each $L \in \mathrm D f(c)$ uniquely extends to an operator in
$\mathcal L(\overline{E}_C,F)$. Hence, the F-differential $\mathrm Df(c)$ is 
a closed convex subset of $\mathcal L(\overline{E}_C,F)$.
For any $e \in E_C$ we denote 
$$
\mathrm Df(c)e=\mathrm Df(c)(e) = \{ L(e)=Le: L \in \mathrm Df(c)\}.
$$
If $e \in \overline{E}_c$ then $\mathrm Df(c)e$ is a singleton and in this case, with a slight abuse of notation, we may use $\mathrm Df(c)e$ instead
of $Le$ for any $L \in \mathrm Df(c)$.

A straightforward consequence of \eqref{eq:frechet} is the chain rule for curves: if $t\mapsto c(t) \in \rmC^1([0,T];C) $ is a differentiable map, then $c'(t)\in\overline{E}_{c(t)}\subset\overline{E}_C$ and
\begin{equation}\label{eq:chain1}
\frac{\de}{\de s}f(c(s))\biggr\vert_{s=t}=\mathrm Df(c(t))(c'(t)),
\end{equation}
whenever $f$ is \Frechet differentiable at $c(t)$ (we remark the abuse of notation mentioned above). 

For  \Frechet differentiability
we can also adopt the handy notation 
$$f(c')- f(c) - \mathrm{D} f(c)(c'-c) = o(\|c'-c\|_E).$$

If we consider the particular case when $E=F$ and $f\colon C\to C$, obviously any $L\in\mathrm D f(c)$ is a linear operator
from $E_C$ to $\overline{E}_C$.
 With these notions, the proof of the following chain rule is standard.

\begin{theorem}[Chain Rule]\label{chainrule}
Suppose $f\colon C  \to C$ and $g\colon C  \to \R$ are \Frechet differentiable, respectively at $c\in C$ and at $d=f(c)\in C$. 
Then $g \circ f\colon C\to\R$ is also \Frechet differentiable at $c$ and 
$$
\mathrm D (g \circ f)(c) \subset \mathrm D g (d)\circ \mathrm D f(c),
$$
where $\mathrm D g (d)\circ \mathrm D f(c) = \{ M \circ L: M\in \mathrm D g (d), L \in \mathrm D f(c) \}$.
\end{theorem}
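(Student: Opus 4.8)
The plan is to fix a representative $L\in\mathrm Df(c)$ and $M\in\mathrm Dg(d)$, to check that the composition $M\circ L$ is a legitimate element of $\mathcal L(E_C,\R)$, and then to run the classical first-order expansion to verify that $M\circ L$ satisfies the defining limit \eqref{eq:frechet} for $g\circ f$ at $c$. The well-posedness of the composition is not completely automatic, since $M$ is only defined and determined on $\overline{E}_d$: one must know that $L$ sends the cone of admissible directions into $\overline{E}_d$. I would verify this directly. For $e\in E_c$, write $e=\lambda(\tilde c-c)$ with $\tilde c\in C$, $\lambda\ge0$; the segment $t\mapsto (1-t)c+t\tilde c$ stays in $C$ by convexity, so the difference quotients $t^{-1}\big(f((1-t)c+t\tilde c)-f(c)\big)$ lie in the cone $E_d=\R_+(C-d)$ and converge to $\lambda^{-1}L(e)$. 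Hence $L(E_c)\subseteq\overline{E}_d$ and, by density, $L(\overline{E}_c)\subseteq\overline{E}_d$, so $M\circ L$ is well defined on the directions that actually matter.

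For the estimate itself, set $d'\coloneqq f(c')$ and write, in the $o$-notation of the text,
$$f(c')-f(c)=L(c'-c)+R_f(c'),\qquad g(d')-g(d)=M(d'-d)+R_g(d'),$$
with $\|R_f(c')\|_E=o(\|c'-c\|_E)$ and $|R_g(d')|=o(\|d'-d\|_E)$. Substituting the first identity into the second gives
$$g(f(c'))-g(f(c))-M\big(L(c'-c)\big)=M\big(R_f(c')\big)+R_g(f(c')).$$
The first term is bounded by $\|M\|\,\|R_f(c')\|_E=o(\|c'-c\|_E)$, using that $M$ is bounded on $\overline{E}_C$. For the second I would use that \Frechet differentiability forces continuity of $f$ at $c$, together with the linear bound $\|f(c')-f(c)\|_E\le(\|L\|+o(1))\|c'-c\|_E$; writing $|R_g(f(c'))|/\|c'-c\|_E=\big(|R_g(d')|/\|d'-d\|_E\big)\big(\|d'-d\|_E/\|c'-c\|_E\big)$ (the term vanishing when $d'=d$), the first factor tends to $0$ as $c'\to c$ while the second stays bounded. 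Thus the right-hand side is $o(\|c'-c\|_E)$, which is exactly \eqref{eq:frechet} for $g\circ f$ with differential $M\circ L$. In particular $g\circ f$ is \Frechet differentiable at $c$ and $M\circ L\in\mathrm D(g\circ f)(c)$ for every admissible pair $(M,L)$.

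It then remains to upgrade this to the stated set inclusion $\mathrm D(g\circ f)(c)\subseteq\mathrm Dg(d)\circ\mathrm Df(c)$. On $\overline{E}_c$ the differential of $g\circ f$ is single-valued, and the previous step shows it agrees there with $M\circ L$ for every choice of $M,L$; hence the two sets coincide on the cone of directions, and the genuine content is to realise an arbitrary bounded extension $K\in\mathrm D(g\circ f)(c)$ of this determined functional as a composition $M\circ L$, exploiting the freedom in extending $L$ off $\overline{E}_c$ and $M$ off $\overline{E}_d$. Concretely, I would try to fix $L$, set $\mu\coloneqq\mathrm Dg(d)|_{\overline{E}_d}$, define $M$ on the subspace $\overline{E}_d+L(E_C)$ by $M(v+L(e))\coloneqq\mu(v)+K(e)$, check boundedness, and extend to $\overline{E}_C$ by Hahn--Banach.

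I expect this last matching step to be the main obstacle. The recipe above can fail to be consistent: there may be $h\in E_C\setminus\overline{E}_c$ with $L(h)\in\overline{E}_d$ for which the forced value $\mu(L(h))$ need not equal $K(h)$; and in infinite dimensions the closed subspace $\overline{E}_c$ need not be topologically complemented in $E_C$, so one cannot simply prescribe $L$ (or $M$) on a complement while keeping it bounded. I would resolve this by using the two degrees of freedom jointly, choosing the extension $L$ so that $L^{-1}(\overline{E}_d)=\overline{E}_c$, which makes the defining formula for $M$ well posed, and only then extending by Hahn--Banach. Should such a clean choice be unavailable, I would fall back on the fact that only the values of the differential on $\overline{E}_c$ enter every subsequent use of the chain rule (through the curve formula \eqref{eq:chain1}), so that the inclusion is to be understood modulo the harmless non-uniqueness on $E_C\setminus\overline{E}_c$.
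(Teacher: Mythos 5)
Your first\--order expansion is precisely the paper's proof: substituting $f(c')=d+L(c'-c)+R_f(c')$ into the expansion of $g$ at $d$ and estimating the two remainders is all the paper does, and your treatment of both error terms is correct (your preliminary check that $L$ maps $E_c$ into $\overline{E}_d$ corresponds to the paper's remark, stated just before and after the theorem, that any $L\in\mathrm Df(c)$ maps $E_C$ into $\overline{E}_C$ and that $\mathrm Df(c)(e)\in\overline{E}_d$ for $e\in E_c$). You are also right about what this argument literally delivers: it shows $M\circ L\in \mathrm D(g\circ f)(c)$ for \emph{every} admissible pair, i.e.\ $\mathrm Dg(d)\circ \mathrm Df(c)\subseteq \mathrm D(g\circ f)(c)$, which is the reverse of the inclusion in the statement --- a point the paper passes over in silence.

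Where your proposal has a genuine gap is the attempted upgrade to the stated inclusion, which you leave unresolved; but the Hahn--Banach matching scheme you sketch (and rightly suspect to be obstructed) is not the missing ingredient. The missing ingredient is the algebraic observation that the cone $E_c=\R_+(C-c)$ \emph{spans} $E_C$: for any $c_1,c_2\in C$ one has $c_1-c_2=(c_1-c)-(c_2-c)\in E_c-E_c$, so $E_c-E_c\supseteq C-C$ and hence $\operatorname{span}(E_c)=E_C$. Since the defining limit \eqref{eq:frechet} forces the value $L(e)$ for every $e\in E_c$ (it is the directional derivative along $e$), any two elements of a Fr\'echet differential agree on $E_c$, hence --- by linearity alone, no density or closure needed --- on all of $E_C$. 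Thus each of $\mathrm Df(c)$, $\mathrm Dg(d)$, $\mathrm D(g\circ f)(c)$ contains at most one element; the multivaluedness that both you and the paper's own commentary take at face value (``multivalued if $E_c$ is not dense in $E_C$'') is never actually realized, because only the span of the cone matters, and it is always everything. Granting this, your expansion step already finishes the proof: $\mathrm D(g\circ f)(c)$ is nonempty, contains $M\circ L$, and being at most a singleton it equals $\{M\circ L\}$, which gives the stated inclusion (in fact an equality). Your entire third paragraph --- the construction with $L^{-1}(\overline{E}_d)=\overline{E}_c$, the complementation worries, and the fallback reading ``modulo harmless non-uniqueness'' --- can simply be deleted: there is no non-uniqueness to accommodate.
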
 
Notice that if $e\in E_c$ then $\mathrm{D}f(c)(e)$ is unique (i.e., it does not depend on the choice of $L$) and belongs to $\overline{E}_d$. Therefore, also the choice of the element $M\in\mathrm{D}g(d)$ is irrelevant.
\begin{proof} It suffices to write
$$
g\circ f(c')-g\circ f(c)= g(d+\mathrm D f(c)(c'-c)+R(c'))-g(d)
$$
with $\|R(c')\|_E=o(\|c'-c\|_E)$ as $c'\to c$,  
and the expansion $g(d')-g(d)=\mathrm Dg(d)(d'-d)+o(\|d'-d\|_E)$ with 
$d'=d+\mathrm D f(c)(c'-c)+R(c')$.
\end{proof}

We say that $f$ is of class $\mathrm{C}^1$ and we write $f \in\rmC^1(C;F)$ if $f$ is \Frechet differentiable at each $c\in C$, 
and there exists a selection $L(c) \in \mathrm D f(c)$ for all $c\in C$, such that
\begin{equation}\label{contsel}
\text{$c\mapsto  L(c)$ is continuous from $C$ to $\mathcal L(E_C,F)$}
\end{equation}
with $\mathcal L(E_C,F)$ endowed with the distance induced by
the operator norm.
To conform with the classical continuous Fr\'echet differentiability, below we may write with a slight abuse of notation that 
\begin{equation}\label{contsel2}
\text{$c\mapsto \mathrm D f(c)$ is continuous from $C$ to $\mathcal L(E_C,F)$}
\end{equation}
to actually mean $f \in\rmC^1(C;F)$ as in \eqref{contsel}.

In the context of Theorem~\ref{chainrule}, by choosing a continuous selection for $\mathrm{D}f(c)$ and for $\mathrm{D}g(d)$, we have a continuous selection for the F-differential of the composition at $c$, $\mathrm{D}(g\circ f)(c)$, thus granting the $\rmC^1$ regularity of $g\circ f$.

\begin{definition}[\Gateaux differentiation]
Let $E, \, F$ be normed vector spaces, $C\subset E$ convex, and let $f\colon C \to F$ be a map.  
We say that $f$ is \Gateaux differentiable at $c\in C$ if the directional right derivatives
$$
\de f(c,e)\coloneqq \lim_{h \to 0^+} \frac{ f(c+ h e)-f(c)}{h}
\quad\text{
exist in $F$ for all $e\in E_{c}$.}
$$
\end{definition}

Of course, \Frechet differentiability at $c$ implies \Gateaux differentiability at $c$, with $\de f(c,e)=\mathrm D f(c)(e)$
for all $e\in E_c$.
In connection with the differentiability properties of flow map, it is useful to establish the converse implication.

\begin{lemma}[Criterion for $\rmC^1$ regularity]\label{final_lemma}
Let $f:C\to F$ be a continuous map and assume the existence of a continuous operator
$$
C\ni c\mapsto L(c)\in\mathcal L(E_C,F)
$$
such that $\de f(c,e)=L(c)e$ for all $c\in C$ and all $e\in E_{c}$. Then $L(c) \in \mathrm Df(c)$ is an admissible choice in \eqref{eq:frechet} for all $c\in C$, so that
$f\in\rmC^1(C;F)$.
\end{lemma}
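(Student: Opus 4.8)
The plan is to upgrade the pointwise Gateaux information to a uniform Fr\'echet estimate along segments, exploiting the convexity of $C$ and the continuity of $c\mapsto L(c)$. Fix $c\in C$; the goal is to show that $L(c)$ is an admissible choice in \eqref{eq:frechet}. Given $c'\in C$ with $c'\ne c$, convexity of $C$ guarantees that the whole segment $c(t)\coloneqq c+t(c'-c)$, $t\in[0,1]$, lies in $C$, and for $t\in[0,1)$ the direction $c'-c$ belongs to $E_{c(t)}$, since $c(t)+h(c'-c)=c+(t+h)(c'-c)\in C$ for $h>0$ small. Hence the hypothesis $\de f(c(t),c'-c)=L(c(t))(c'-c)$ applies all along this segment.

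Next I would introduce the auxiliary curve $g\colon[0,1]\to F$, $g(t)\coloneqq f(c(t))-t\,L(c)(c'-c)$, which is continuous because $f$ is. Computing the right difference quotient and using the hypothesis, one finds that $g$ admits at every $t\in[0,1)$ the right derivative $g'_+(t)=\bigl[L(c(t))-L(c)\bigr](c'-c)$, so that $\|g'_+(t)\|_F\le \|L(c(t))-L(c)\|_{\mathcal L(E_C,F)}\,\|c'-c\|_E$. By continuity of $L$ at $c$, for every $\varepsilon>0$ there is $\delta>0$ such that $\|L(c'')-L(c)\|_{\mathcal L(E_C,F)}<\varepsilon$ whenever $c''\in C$ and $\|c''-c\|_E<\delta$; since $\|c(t)-c\|_E=t\,\|c'-c\|_E\le\|c'-c\|_E$ for all $t$, this yields the uniform bound $\|g'_+(t)\|_F\le\varepsilon\,\|c'-c\|_E$ on $[0,1)$ as soon as $\|c'-c\|_E<\delta$.

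Then I would invoke the mean value inequality for $F$-valued curves possessing a one-sided derivative (a standard fact, see \cite[Chapter~X]{Dieudonne1960}, \cite[Partie~II]{Cartan1967}): a continuous $g$ with $\|g'_+(t)\|_F\le M$ on $[0,1)$ satisfies $\|g(1)-g(0)\|_F\le M$. Since $g(1)-g(0)=f(c')-f(c)-L(c)(c'-c)$, this gives $\|f(c')-f(c)-L(c)(c'-c)\|_F\le\varepsilon\,\|c'-c\|_E$ whenever $\|c'-c\|_E<\delta$, which is exactly \eqref{eq:frechet} for the choice $L=L(c)$. Hence $L(c)\in\mathrm Df(c)$ for every $c\in C$, and since $c\mapsto L(c)$ is continuous by assumption, the continuous-selection requirement \eqref{contsel} is met and $f\in\rmC^1(C;F)$.

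The only genuinely delicate point will be the mean value inequality: $g$ is a priori only equipped with a right derivative on $[0,1)$, so I cannot differentiate its norm or appeal to a classical $\rmC^1$ fundamental theorem of calculus. The standard remedy is the Banach-valued mean value inequality, obtained either by a Hahn--Banach reduction to scalar functions or by a bisection argument, which requires only continuity of $g$ together with the bound on the right derivative. Everything else is bookkeeping: the admissibility of the direction $c'-c$ in $E_{c(t)}$ along the open segment, and the uniform continuity estimate for $L$ on the segment.
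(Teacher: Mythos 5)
Your proof is correct, and it follows the paper's basic skeleton---restrict to the segment $c_t=c+t(c'-c)$, use the hypothesis to differentiate $t\mapsto f(c_t)$ along it, and exploit continuity of $L$ at $c$ to obtain an $o(\|c'-c\|_E)$ bound---but the concluding tool is genuinely different. The paper notes that $\pm(c'-c)\in E_{c_t}$ at the \emph{interior} points of the segment, so $t\mapsto f(c_t)$ is two-sidedly differentiable on $(0,1)$ with derivative $L(c_t)(c'-c)$ extending continuously to $[0,1]$, and then invokes its fundamental theorem of calculus (Theorem~\ref{fundcalc}) to get the integral representation $f(c')-f(c)=\int_0^1 L(c_t)(c'-c)\,\de t$ (formula \eqref{eq:46}, which is also reused later in Remark~\ref{rem:tedious}); the Fr\'echet estimate then follows by bringing $L(c)(c'-c)$ inside the integral. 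You instead subtract the candidate linear part, work only with the right derivative $g_+'(t)=[L(c_t)-L(c)](c'-c)$ on $[0,1)$, and apply the Banach-valued mean value inequality for one-sided derivatives. Both routes produce the same quantitative bound $\|f(c')-f(c)-L(c)(c'-c)\|_F\le \sup_{t\in[0,1]}\|L(c_t)-L(c)\|_{\mathcal L(E_C,F)}\,\|c'-c\|_E$, which is \eqref{eq:frechet} for the selection $L(c)$, and your final appeal to \eqref{contsel} is exactly right. What your version buys: it avoids vector integration entirely and never needs two-sided differentiability, so it is marginally more robust and self-contained in that respect; the price is the external mean value inequality, which the paper does not develop (and which, for the record, is Dieudonn\'e's Theorem~8.5.1 in Chapter~VIII, not Chapter~X). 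What the paper's version buys: with Theorem~\ref{fundcalc} already set up in the appendix, the argument is two lines, and the integral identity it produces does additional work elsewhere in the paper.
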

\begin{proof} Set $c_t=c+t(c'-c)$ for $t\in [0,1]$ and set $e=c'-c\in E_c$, and notice that $\pm e\in E_{c_t}$ for all $t\in (0,1)$.
Since $t\mapsto f(c_t)$ is continuous in $[0,1]$ and differentiable in $(0,1)$, we can write
\begin{equation}
f(c')-f(c)=\int_0^1\de f(c_t,e)\,\de t=\int_0^1 L(c_t)e\,\de
t.\label{eq:46}
\end{equation}
Now we use that $L(c_t)e/\|e\|_E\to L(c)e/\|e\|_E$ uniformly in $[0,1]$ as $c'\to c$ to obtain \eqref{eq:frechet}.
\end{proof}

\begin{remark}
  \label{rem:tedious}
  \upshape
  Formula
  \eqref{eq:46} also shows that a function $f\in \rmC^1(C;F)$ with
  a uniformly bounded  \Frechet differential selection $L\colon c\mapsto L(c)$ as in \eqref{contsel}, i.e., $L\in \rmC_b(C;\mathcal L(E_C,F))$, satisfies a uniform bound
  \begin{equation}
    \label{eq:47}
    \|f(c)\|_{F}\le \|f(c_0)\|_F+\|c-c_0\|_E\,\sup_{c\in C}\|L(c)\|_{\mathcal L(E_C,F)},
  \end{equation}
    where $c_0$ is a given point in $C$. 
    In particular $f$ has linear growth
  \begin{equation}
    \label{eq:48}
    \sup_{c\in C}\frac{ \|f(c)\|_{F}}{1+\|c-c_0\|_E}
    \le \|f(c_0)\|+\sup_{c\in C}\|L_c\|_{\mathcal L(E_C,F)}.
  \end{equation}
  The left-hand side of \eqref{eq:48} defines 
  a norm in the space $\rmC_{\rm lg}(C,F)$ of continuous functions
  with linear growth (the definition is in fact
  independent of $c_0$).
  The same argument of the proof of Lemma \ref{final_lemma}
  also shows that the graph of the multivalued operator $\mathrm D$ 
  \begin{equation}
    \label{eq:31}
    X\coloneqq \Big\{(f,L)\in 
    \rmC^1(C,F)\times \rmC_b(C,\mathcal L(E_C,F)):
    L(c)\in \mathrm Df(c)\text{ for every $c\in C$}\Big\},
  \end{equation}
  between $\rmC^1(C;F)$ and $\rmC_b(C;\mathcal L(E_C,F))$ is 
  closed, and thus a Banach space,
  with respect to the graph norm
  \begin{equation}
    \label{eq:45}
    \|(f,L)\|_X\coloneqq \sup_{c\in C}\frac{ \|f(c)\|_{F}}{1+\|c-c_0\|_E}+
    \sup_{c\in C}\| L(c) \|_{\mathcal L(E_C,F)}
  \end{equation}
  which in turn is equivalent to $\|f(c_0)\|_F+\sup_{c\in C}\| L(c) \|_{\mathcal L(E_C,F)}$.
  
The closedness of the graph $X$ is equivalent to saying that if $c\mapsto L_n(c)\in \mathrm D f_n(c)$ is a sequence of maps in $\rmC_b(C; \mathcal L(E_C,F))$ uniformly converging to $L \in \rmC_b(C;  \mathcal L(E_C,F))$ and $\lim_{n\to\infty}f_n(c_0)=f(c_0)$ in $F$, then  $f_n$ is also converging uniformly on bounded sets to a function $f \in  \rmC^1(C;F)$  and $L(c)\in \mathrm Df(c)$ for all $c \in C$.
\end{remark}

\subsection{Bochner integration}\label{BI}
Let $(A,\mathcal A, \mu)$ be a $\sigma$-finite measure space and let $E$ be a Banach space. 
A $\mu$-simple function $f\colon A \to E$ is representable as
$$ f = \sum_{n=1}^N \chi_{A_n} e_n, $$
where $e_n \in E$ and $A_n \in \mathcal A$ with $\mu(A_n) < +\infty$.

\begin{definition}[Bochner integral]\label{def:Bochner}
A function $f\colon A \to E$ is $\mu$-Bochner integrable if there exist simple functions $f_n\colon A \to E$ such that 
\begin{itemize}
\item[(i)] $\lim_n f_n = f$ $\mu$-a.e. (strong $\mu$-measurability);
\item[(ii)] $\lim_n  \int_A \| f_n - f\|_E \,\de \mu =0$. 
\end{itemize}
\end{definition}

If $f$ is $\mu$-Bochner integrable then 
$$
\int_A f \,\de \mu  = \lim_n   \int_A f_n \,\de \mu  \in E
$$
exists, is independent of the sequence $(f_n)$, it is called the Bochner integral of $f$, and satisfies
\begin{equation}\label{eq:intstimanorme}
\bigg\|\int_A f \,\de \mu\bigg\|_E \leq \int_A \|f\|_E \,\de \mu.
\end{equation}

We shall use the fact that, in the case when $(A,\sfd_A)$ is a separable metric space and $\mathcal A$
is the Borel $\sigma$-algebra, any continuous function $f\colon A\to E$ is strongly $\mu$-measurable. This is a consequence
of Pettis measurability theorem (see for instance \cite[Chapter 5]{Yosida80}), since $x\mapsto \langle e',f(x)\rangle$ is continuous, hence
$\mu$-measurable, for any $e'\in E'$ (the so-called weak measurability property), and the separability of the range of $f$. 

A simple criterion for Bochner integrability is the following.

\begin{proposition}[Bochner integrability criterion]
A strongly $\mu$-measurable function (as in (i) of Definition~\ref{def:Bochner}) $f\colon A \to E$ is $\mu$-Bochner integrable if and only if 
$$
\int_A \|f\|_E \, \de \mu <+ \infty.
$$
\end{proposition}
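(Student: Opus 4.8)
The plan is to prove the two implications separately. Necessity is elementary; sufficiency rests on a truncation trick that reduces matters to the scalar dominated convergence theorem.

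For necessity, I would start from a sequence of simple functions $f_n$ satisfying (i) and (ii) of Definition~\ref{def:Bochner}. Each simple function, written $f_n=\sum_{k}\chi_{A_k}e_k$, satisfies $\int_A\|f_n\|_E\,\de\mu\le\sum_k\|e_k\|_E\,\mu(A_k)<+\infty$, precisely because $\mu(A_k)<+\infty$ in the definition of simple function. Choosing $n$ large enough that $\int_A\|f-f_n\|_E\,\de\mu<+\infty$ (possible by (ii), since this quantity tends to $0$), the triangle inequality $\|f\|_E\le\|f-f_n\|_E+\|f_n\|_E$ and monotonicity of the integral give $\int_A\|f\|_E\,\de\mu<+\infty$.

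For sufficiency, assume $f$ is strongly $\mu$-measurable with $\int_A\|f\|_E\,\de\mu<+\infty$. By the meaning of strong measurability (condition (i) of Definition~\ref{def:Bochner}) there are simple functions $g_n\to f$ $\mu$-a.e.; these need not satisfy (ii), so I would truncate them. Set $S_n\coloneqq\{x:\|g_n(x)\|_E\le 2\|f(x)\|_E\}$ — a measurable set, since $\|f\|_E$ is the $\mu$-a.e. limit of the measurable functions $\|g_n\|_E$ — and define $f_n\coloneqq g_n\chi_{S_n}$. Writing $g_n=\sum_k\chi_{A_k}e_k$ gives $f_n=\sum_k\chi_{A_k\cap S_n}e_k$ with $\mu(A_k\cap S_n)\le\mu(A_k)<+\infty$, so each $f_n$ is again simple. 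Next I would verify the two required properties. First, $f_n\to f$ $\mu$-a.e.: on the full-measure set where $g_n\to f$, if $f(x)\neq0$ then $\|g_n(x)\|_E<2\|f(x)\|_E$ for $n$ large, so $x\in S_n$ eventually and $f_n(x)=g_n(x)\to f(x)$; while if $f(x)=0$ then $f_n(x)=0=f(x)$ for every $n$, since either $g_n(x)=0$ or $x\notin S_n$. Second, by construction $\|f_n\|_E\le 2\|f\|_E$, whence $\|f_n-f\|_E\le 3\|f\|_E\in L^1(\mu)$; since $\|f_n-f\|_E\to 0$ $\mu$-a.e., the scalar dominated convergence theorem yields $\int_A\|f_n-f\|_E\,\de\mu\to 0$, which is exactly (ii). Together with (i) this exhibits $f$ as Bochner integrable.

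The only genuine subtlety — and the step I would treat most carefully — is the truncation itself: one must ensure the modified functions remain simple (in particular that the supporting sets keep finite $\mu$-measure) while simultaneously recovering a.e. convergence and an $L^1$ domination, so that dominated convergence applies. The factor $2$ in the definition of $S_n$ is what guarantees both that $x\in S_n$ holds eventually at $\mu$-a.e. point and the uniform bound $\|f_n\|_E\le 2\|f\|_E$ that supplies the integrable dominating function.
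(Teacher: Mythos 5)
Your proof is correct. Note that the paper itself states this proposition without proof, treating it as a classical fact (Bochner's integrability theorem, found e.g.\ in Yosida's book cited by the authors), so there is no argument in the paper to compare yours against; the truncation device you use --- replacing the approximating simple functions $g_n$ by $g_n\chi_{S_n}$ with $S_n=\{\|g_n\|_E\le 2\|f\|_E\}$, so as to recover a.e.\ convergence together with the integrable dominating function $3\|f\|_E$, and then invoking the scalar dominated convergence theorem --- is precisely the standard textbook proof, and both implications are carried out correctly. The one point worth polishing is the measurability of the sets $S_n$: since $\|f\|_E$ is only an a.e.\ limit of the measurable functions $\|g_n\|_E$, it is measurable only up to a $\mu$-null set (or after completing the measure space); to make this airtight one can replace $\|f\|_E$ by the genuinely measurable function $h\coloneqq\limsup_n\|g_n\|_E$ in the definition of $S_n$, which equals $\|f\|_E$ $\mu$-a.e., after which your argument runs verbatim. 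This is a technicality, not a gap.
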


Bochner integral commutes in many ways with linear operators, let us illustrate these properties for duality operators, linear
operators and differentiation operators.

(1) For any $x' \in E'$ one has 
$$
\biggl\langle \int_A f \,\de \mu, x'\bigg\rangle_{E \times E'} =  \int_A \langle f , x' \rangle_{E \times E'}\, \de \mu
$$
and an analogous formula with the pre-dual holds if $E$ is a dual Banach space. 

(2) If $f\colon A \to E$ is $\mu$-Bochner integrable and $T \in \mathcal L(E,F)$, then $Tf\colon A \to F$ is also  $\mu$-Bochner integrable and
$$
 T \int_A f \,\de \mu = \int_A Tf \,\de \mu. 
$$

(3) Let $E,\,F$ be normed spaces.  If $(f,L)\colon A \to X$, where $X$ in the Banach space defined in \eqref{eq:31}, where $L(x,\cdot)$ is a continuous selection in $\mathrm Df(x,\cdot)$, is $\mu$-Bochner integrable, then
\begin{equation}\label{exdiffint}
\mathrm D \int_A  f (x,\cdot)\,\de \mu(x) \ni
\int_A L(x,\cdot) \,\de \mu(x)\in \rmC_b(C;\mathcal L(E_C,F)).
\end{equation}
This useful formula explains the correct exchange of \Frechet differentiation and Bochner integration. In particular
it provides, in conjunction with item (1) above, 
the following simple extension of the fundamental theorem of calculus,
for which in fact the simpler Riemann integral would be
sufficient. 
\begin{theorem}\label{fundcalc}
If $f\colon [a,b] \to F$ is continuous, differentiable in $(a,b)$, and $f'$ extends continuously to $[a,b]$, then
$$
f(b)-f(a) = \int_a^b f'(\xi) \,\de\xi.
$$
\end{theorem}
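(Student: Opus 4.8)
The plan is to reduce the Banach-space–valued identity to the classical scalar fundamental theorem of calculus by pairing with functionals $x'\in F'$ and then separating points via Hahn--Banach.

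First I would check that the right-hand side is well defined. Since $f'$ extends continuously to the compact interval $[a,b]$, it is bounded and has separable range, hence is strongly measurable by Pettis' theorem, and $\int_a^b\|f'\|_F\,\de\xi<+\infty$; the Bochner integrability criterion then guarantees that $\int_a^b f'(\xi)\,\de\xi$ exists as an element of $F$ (completeness of $F$ is what is used at this point).

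Next I would fix an arbitrary $x'\in F'$ and use property (1) of the Bochner integral to commute the duality pairing with the integral,
$$
\Big\langle \int_a^b f'(\xi)\,\de\xi,\,x'\Big\rangle=\int_a^b \langle f'(\xi),\,x'\rangle\,\de\xi.
$$
Setting $g(\xi)\coloneqq\langle f(\xi),x'\rangle$, the linearity and continuity of $x'$ yield that $g$ is continuous on $[a,b]$, differentiable on $(a,b)$ with $g'(\xi)=\langle f'(\xi),x'\rangle$, and that $g'$ extends continuously to $[a,b]$. The scalar fundamental theorem of calculus then gives $g(b)-g(a)=\int_a^b g'(\xi)\,\de\xi$, that is,
$$
\langle f(b)-f(a),\,x'\rangle=\Big\langle \int_a^b f'(\xi)\,\de\xi,\,x'\Big\rangle.
$$
Finally, since this holds for every $x'\in F'$ and $F'$ separates the points of $F$ by Hahn--Banach, I would conclude $f(b)-f(a)=\int_a^b f'(\xi)\,\de\xi$.

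As for the main difficulty: there is no substantial obstacle, the statement being a soft consequence of the scalar case. The only genuine ingredients are the interchange of the pairing with the Bochner integral (property (1)) and the point-separating property of the dual; the verification that the auxiliary scalar function $g$ inherits the hypotheses is immediate from the continuity and linearity of $x'$, and the measurability and integrability of $f'$ are handled exactly as in the Bochner integrability criterion used throughout the appendix.
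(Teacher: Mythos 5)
Your proof is correct and takes the route the paper itself intends: the paper states Theorem~\ref{fundcalc} without a detailed argument, remarking only that it follows from the commutation of the Bochner integral with duality pairings (item (1) of that section) together with the scalar fundamental theorem of calculus --- precisely the scalarization-plus-Hahn--Banach argument you spell out. Your added care about strong measurability via Pettis, the completeness of $F$ needed for the integral to exist, and the point-separation step are exactly the details the paper leaves implicit.
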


\section{Well-posedness of ODEs in Banach spaces and linearization} \label{sec:regularity}

First we recall Brezis' theorem \cite[Sect.~I.3, Thm.~1.4, Cor.~1.1]{Brezis} on the well-posedness of ODE's in Banach spaces. 
\begin{theorem}
 \label{thm:Brezis}
  Let $(E,\|\cdot\|_E)$ be a Banach space, $C$ a closed convex subset
  of $E$ and let $A(t,\cdot)\colon C\to E$, $t\in [0,T]$, be a family of operators satisfying the
  following properties: 
  \begin{enumerate}
  \item there exists a constant $L\geq 0$ such that
    \begin{equation}
      \label{eq:15}
      \|A(t,c_1)-A(t,c_2)\|_E\le L\|c_1-c_2\|_E\qquad
      \text{for every $c_1,\,c_2\in C$ and $t\in [0,T]$;}
    \end{equation}
  \item for every $c\in C$ the map $t\mapsto A(t,c)$ is continuous in $[0,T]$;
  \item for every $R>0$ there exists $\theta>0$ such that
    \begin{equation}
      \label{eq:16}
      c\in C,\ \|c\|_E\le R\quad
      \Rightarrow\quad
      c+\theta A(t,c)\in C.
    \end{equation}
  \end{enumerate}
  Then for every $\bar c\in C$ there exists a unique curve
  $c\colon[0,T]\to C$ of class $\rmC^1$ satisfying $c_t\in C$ for all $t\in [0,T]$ and
\begin{equation}\label{eq:17}
\frac{\de}{\de t}c_t=A(t,c_t)\quad\text{in }[0,T],\qquad  c_0=\bar c.
\end{equation}
Moreover, if $c^1,\,c^2$ are the solutions starting from the initial data $\bar c^1,\,\bar c^2\in C$ respectively, we have
\begin{equation}\label{eq:18}
\|c^1_t-c^2_t\|_E\le \mathrm e^{Lt}\|\bar c^1-\bar c^2\|_E,\qquad\text{for every $t\in [0,T]$.}
\end{equation}
\end{theorem}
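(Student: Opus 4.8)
The plan is to separate the easy part---uniqueness and the stability estimate \eqref{eq:18}, which follow from the Lipschitz bound \eqref{eq:15} alone---from the genuinely constrained part, namely producing a solution that \emph{stays} in $C$, which is where the sub-tangency condition \eqref{eq:16} enters. For uniqueness and stability, suppose $c^1,c^2\in\rmC^1([0,T];E)$ solve \eqref{eq:17} with data $\bar c^1,\bar c^2$. The function $\phi(t)\coloneqq\|c^1_t-c^2_t\|_E$ is Lipschitz, and expanding $c^i_{t+h}=c^i_t+h\,\dot c^i_t+o(h)$ together with the triangle inequality gives the one-sided bound $D^+\phi(t)\le\|\dot c^1_t-\dot c^2_t\|_E=\|A(t,c^1_t)-A(t,c^2_t)\|_E\le L\phi(t)$. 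Gronwall's lemma then yields $\phi(t)\le\mathrm e^{Lt}\phi(0)$, which is exactly \eqref{eq:18}; taking $\bar c^1=\bar c^2$ gives uniqueness.

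For existence I would use an explicit Euler time-discretization, since in a general Banach space there is no metric projection onto $C$ and the Picard integral operator $c\mapsto\bar c+\int_0^\cdot A(s,c(s))\,\de s$ need not preserve $C$-valued curves. Fix a step $h=T/N$, set $c^h_0=\bar c$ and $c^h_{k+1}=c^h_k+h\,A(t_k,c^h_k)$ with $t_k=kh$, and let $c^h$ be the piecewise-affine interpolant. Three facts drive the argument. First, \eqref{eq:15} gives the linear growth $\|A(t,c)\|_E\le M+L\|c\|_E$ (with $M\coloneqq\sup_t\|A(t,c_0)\|_E+L\|c_0\|_E$ finite, for a fixed $c_0\in C$, by continuity on $[0,T]$), whence a discrete Gronwall estimate bounds all nodes by a constant $R_0=\mathrm e^{LT}(\|\bar c\|_E+MT)$ independent of $h$. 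Second, choosing $\theta=\theta(R_0)$ as in \eqref{eq:16} and $h\le\theta$, the nodes remain in $C$ by induction: the identity $c^h_k+h\,A(t_k,c^h_k)=(1-h/\theta)\,c^h_k+(h/\theta)\bigl(c^h_k+\theta A(t_k,c^h_k)\bigr)$ exhibits $c^h_{k+1}$ as a convex combination of $c^h_k\in C$ and $c^h_k+\theta A(t_k,c^h_k)\in C$, so convexity gives $c^h_{k+1}\in C$; since the interpolant is itself a convex combination of consecutive nodes, $c^h_t\in C$ for all $t$. Third, the interpolants are equi-Lipschitz in $t$ with constant $M+LR_0$.

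The last step is convergence of $\{c^h\}$ as $h\to0$ and identification of the limit. The natural route is to show the polygonals are Cauchy in $\rmC([0,T];E)$: writing $c^h$ in integral form and comparing two step sizes, the Lipschitz-in-$c$ bound controls the ``space'' error by $L\int_0^t\|c^h_s-c^{h'}_s\|_E\,\de s$ plus $O(h+h')$, while the ``time'' error is governed by the modulus of continuity of $s\mapsto A(s,\cdot)$, so that Gronwall closes the estimate and delivers a uniform limit $c$. Because $C$ is closed and each $c^h_t\in C$, the limit satisfies $c_t\in C$, and passing to the limit in the integral equation shows $c$ is $\rmC^1$ and solves \eqref{eq:17}. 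I expect the main obstacle to lie exactly here: in infinite dimensions there is no Arzel\`a--Ascoli compactness to fall back on (equi-Lipschitz curves need not be relatively compact), so convergence must be proved by a quantitative Cauchy estimate rather than by extracting a subsequence, and the ``time'' error must be handled with care since \eqref{eq:15}--\eqref{eq:16} only guarantee continuity of $t\mapsto A(t,c)$ for each \emph{fixed} $c$. In the situations of interest this continuity is in fact uniform in $c$ on bounded sets (cf.\ Proposition~\ref{pv5}(iii), where the bound is independent of the point), or, having a candidate limit in hand, one closes the estimate by dominated convergence along the fixed limiting curve. The strong form of \eqref{eq:16}---invariance for a \emph{fixed} $\theta$, not merely in the limit---is precisely what makes the convexity trick, and hence the whole invariance argument, elementary.
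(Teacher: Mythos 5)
The paper itself offers no proof of Theorem~\ref{thm:Brezis}: it is quoted from Brezis \cite[Sect.~I.3, Thm.~1.4, Cor.~1.1]{Brezis}, so your attempt must be measured against the classical argument rather than against anything in the text. Much of what you do is correct and is indeed how that argument begins: the uniqueness/stability part via the upper Dini derivative of $t\mapsto\|c^1_t-c^2_t\|_E$ and Gronwall is fine, and so are the three building blocks of your Euler scheme, namely the a priori bound $R_0$, the invariance of $C$ through the identity $c^h_k+h\,A(t_k,c^h_k)=(1-h/\theta)\,c^h_k+(h/\theta)\bigl(c^h_k+\theta A(t_k,c^h_k)\bigr)$ for $h\le\theta(R_0)$, and the equi-Lipschitz bound; the convexity trick is exactly what the strong form of \eqref{eq:16} is for.

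The gap is the one you flagged, and it is genuine rather than cosmetic. Write $\omega_c(\delta)\coloneqq\sup\{\|A(t,c)-A(s,c)\|_E:\ |t-s|\le\delta\}$. Comparing two fixed-step polygonals forces you to control the time error $\|A(t_k,c)-A(t_{k'},c)\|_E$ at the points $c=c^{h'}(s)$ swept out by the competing polygonal, i.e.\ you need $\sup_s\omega_{c^{h'}(s)}(h+h')\to 0$. Hypothesis (ii) gives $\omega_c(\delta)\to0$ only for each fixed $c$; by \eqref{eq:15} one has $\omega_c(\delta)\le\omega_{c'}(\delta)+2L\|c-c'\|_E$, so the convergence is uniform on compact subsets of $C$ (a Dini-type argument), but the traces of the polygonals are merely bounded, they vary with $h'$, and in an infinite-dimensional $E$ they need not lie in any fixed compact set. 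So your Cauchy estimate does not close. Of your two suggested repairs, the first (a time modulus uniform over bounded sets) changes the hypotheses: it does cover the use made of the theorem in this paper, since Proposition~\ref{pv5}(iii) gives $\|b_\Lambda(t,y)-b_\Lambda(s,y)\|\le L\,W_1(\Lambda_t,\Lambda_s)$ uniformly in $y$, but it does not prove Theorem~\ref{thm:Brezis} as stated. The second (``dominated convergence along the fixed limiting curve'') is circular: in an existence proof there is no limit curve yet.

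The repair keeps your scheme but changes two things, and is essentially the classical Martin--Brezis argument. First, compare \emph{approximate solutions} rather than polygonals: call $u$ an $\epsilon$-approximate solution if it is $C$-valued, piecewise $\rmC^1$, and $\|\dot u(t)-A(t,u(t))\|_E\le\epsilon$, the defect being measured at the actual time and position. For an $\epsilon$- and a $\delta$-approximate solution with the same datum, $D^+\|u_t-v_t\|_E\le L\|u_t-v_t\|_E+\epsilon+\delta$, hence $\|u_t-v_t\|_E\le(\epsilon+\delta)\,t\,\mathrm e^{Lt}$: no time-error term appears, the family is Cauchy as $\epsilon\to0$, the uniform limit stays in $C$, and passing to the limit in the integral formulation (using only \eqref{eq:15} and joint continuity of $A$, which follows from (i)+(ii)) gives \eqref{eq:17}. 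Second, build $\epsilon$-approximate solutions by your Euler scheme with \emph{adaptive} steps: at a node $(t_k,c_k)$ take $h_k$ equal to half the supremum of the steps $h\le\theta\wedge\epsilon\wedge(T-t_k)$ with $\omega_{c_k}(h)\le\epsilon$; the polygon is then a $C'\epsilon$-approximate solution with $C'=1+L(M+LR_0)$, and it stays in $C$ by your convexity trick. The steps cannot shrink to zero before reaching $T$: if $t_k\uparrow t^*<T$, then $(c_k)$ is Cauchy, $c_k\to c^*\in C$, and choosing $h^*>0$ with $\omega_{c^*}(h^*)\le\epsilon/2$ the inequality $\omega_{c_k}(h^*)\le\omega_{c^*}(h^*)+2L\|c_k-c^*\|_E\le\epsilon$ holds eventually, so $h_k\ge\tfrac12\min\{h^*,\theta,\epsilon,T-t_k\}$ stays bounded away from zero, a contradiction. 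With these two modifications your proposal becomes a complete proof under the stated hypotheses.
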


Assume now that $A(t,\cdot)\colon C\to E$ is a family of operators as required by the assumptions of Theorem~\ref{thm:Brezis}, so that the 
flow map $\YY\colon [0,T]\times C\to C$ given by
\begin{equation}\label{ode1}
\frac{\de}{\de t}\YY(t,c)=A(t,\YY(t,c)), \qquad \YY(0,c) =  c,
\end{equation}
is well defined. For simplicity we consider only the flow map starting from $t=0$, but the same results hold for the
full family of transition maps $\YY(t,s,c)$.

Thanks to \eqref{eq:16}, the operators $A$ take their values in $E_C$.
In this section we shall highlight additional conditions on $A$ for the flow map $\YY(t,\cdot)$ 
to be continuously \Frechet differentiable, uniformly in $t\in [0,T]$.  More precisely, we assume that the operators $A(t,\cdot)$ satisfy
\begin{equation}\label{extraA}
\text{$(t,c)\mapsto\mathrm D_{E_C} A(t,\cdot)(c)$ is continuous from $[0,T]\times C$ to $\mathcal L(E_C,\overline{E}_C)$.}
\end{equation}
According to \eqref{contsel2}, by \eqref{extraA} we mean that there exists a selection $L_{A}(t,c) \in \mathrm D_{E_C} A(t,\cdot)(c)$ such that
$(t,c)\mapsto L_{A}(t,c)$ is continuous from $ [0,T] \times C$ to $\mathcal L(E_C,\overline{E}_C)$.

Under this assumption, we prove by classical linearization and stability arguments
the existence of the \Gateaux derivatives, and then their continuity as functions of the point of differentiation,
using eventually Lemma~\ref{final_lemma} to obtain the \Frechet differentiability.

\begin{theorem}\label{thm:extraA}
Under the assumptions of Theorem~\ref{thm:Brezis} and \eqref{extraA}, the map $\YY(t,\cdot)$ is continuously \Frechet differentiable for all $t\in [0,T]$, with a \Frechet differential $\mathrm D\YY$ satisfying:
\begin{itemize}
\item[(i)] $(t,c)\mapsto \mathrm D\YY(t,\cdot)(c)$ continuous from $ [0,T] \times C$ to $\mathcal{L}(E_C,\overline{E}_C)$;
\item[(ii)] $\|\mathrm D\YY(t,\cdot)(c)\|_{{\mathcal L}(E_C,\overline{E}_C)}\leq\mathrm e^{LT}$ for all $(t,c)\in [0,T]\times C$.
\end{itemize}
\end{theorem}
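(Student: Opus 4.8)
The plan is to realize $\mathrm D\YY(t,\cdot)(c)$ as the solution operator of the linearized (variational) equation associated to \eqref{ode1}, and then to promote the resulting Gateaux derivative to a Fréchet derivative by invoking the criterion of Lemma~\ref{final_lemma}. Writing $L_A(t,c)\in\mathrm D_{E_C}A(t,\cdot)(c)$ for the continuous selection provided by \eqref{extraA} (extended canonically to an element of $\mathcal L(\overline{E}_C,\overline{E}_C)$), I would first fix $c\in C$ and define $R(t,c)\in\mathcal L(E_C,\overline{E}_C)$ as the solution operator of the linear non-autonomous ODE
\[
\frac{\de}{\de t}R(t,c)=L_A\bigl(t,\YY(t,c)\bigr)\circ R(t,c),\qquad R(0,c)=\mathrm{Id}.
\]
The coefficient $t\mapsto L_A(t,\YY(t,c))$ is continuous, by \eqref{extraA} together with the continuity of the flow, and bounded in operator norm by $L$, since $A(t,\cdot)$ is $L$-Lipschitz by \eqref{eq:15}; hence this linear equation in the Banach space $\overline{E}_C$ is well posed, and a direct Gronwall estimate gives $\|R(t,c)\|_{\mathcal L(E_C,\overline{E}_C)}\le\mathrm e^{Lt}\le\mathrm e^{LT}$, which is exactly the bound (ii).

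Next I would show that $R(t,c)$ is the Gateaux derivative of $\YY(t,\cdot)$. Fix a direction $e\in E_c$ and set $c_h\coloneqq c+he\in C$ for $h>0$ small; by convexity of $C$ the whole segment joining $\YY(t,c)$ and $\YY(t,c_h)$ lies in $C$, so the mean-value representation \eqref{eq:46} applied to the $\rmC^1$ map $A(t,\cdot)$ yields, for the difference quotient $\delta_h(t)\coloneqq h^{-1}\bigl(\YY(t,c_h)-\YY(t,c)\bigr)$,
\[
\frac{\de}{\de t}\delta_h(t)=\Bigl(\int_0^1 L_A\bigl(t,\YY(t,c)+r(\YY(t,c_h)-\YY(t,c))\bigr)\,\de r\Bigr)\delta_h(t),\qquad\delta_h(0)=e.
\]
By the stability estimate \eqref{eq:18} of Theorem~\ref{thm:Brezis}, $\YY(t,c_h)\to\YY(t,c)$ uniformly in $t$ as $h\to0^+$, so by the continuity of $L_A$ the averaged coefficient converges uniformly to $L_A(t,\YY(t,c))$; a Gronwall comparison of the two linear equations then gives $\delta_h(t)\to R(t,c)e$ as $h\to0^+$, uniformly in $t\in[0,T]$. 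Hence the directional derivatives $\de\YY(t,\cdot)(c,e)$ exist and equal $R(t,c)e$ for every $e\in E_c$.

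Finally, I would establish that $(t,c)\mapsto R(t,c)$ is continuous from $[0,T]\times C$ into $\mathcal L(E_C,\overline{E}_C)$: this follows from the continuous dependence of solutions of the linear equation on the coefficient and on time (again a Gronwall comparison), together with the joint continuity of $(t,c)\mapsto L_A(t,\YY(t,c))$ coming from \eqref{extraA} and the joint continuity of the flow $\YY$. With $\YY(t,\cdot)$ continuous (indeed Lipschitz, by \eqref{eq:18}), its directional derivatives given by $R(t,c)e$ for all $e\in E_c$, and $c\mapsto R(t,c)$ continuous for fixed $t$, Lemma~\ref{final_lemma} applies and yields that $\YY(t,\cdot)\in\rmC^1(C;\overline{E}_C)$ with the admissible selection $\mathrm D\YY(t,\cdot)(c)=R(t,c)$; this proves (i), and combined with the Gronwall bound of the first step, (ii).

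The main obstacle is organizational rather than computational, and lives in the non-open convex domain $C$: the difference quotients are only defined along the admissible cone $E_c$ (for small $h>0$), whereas the candidate differential must be an operator on the full space $E_C$ taking values in its closure $\overline{E}_C$, so $L_A$ and $R$ have to be handled in $\mathcal L(\overline{E}_C,\overline{E}_C)$. The crux is to produce a single continuous operator $c\mapsto R(t,c)$ whose action on $E_c$ matches the one-sided directional derivatives, precisely so that Lemma~\ref{final_lemma} can upgrade Gateaux to continuous Fréchet differentiability; securing the uniform-in-$t$ convergence of the difference quotients and the joint continuity of $(t,c)\mapsto R(t,c)$ is where the bulk of the care is needed.
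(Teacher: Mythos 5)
Your proposal is correct and follows essentially the same route as the paper: define the candidate differential as the solution of the linearized equation $z'=L_A(t,\YY(t,c))z$ along the flow, show the difference quotients converge to it via a Gronwall comparison, prove joint continuity of the resulting operator by comparing the linear equations for nearby base points, and invoke Lemma~\ref{final_lemma} to upgrade Gateaux to continuous Fr\'echet differentiability, with the bound (ii) coming from Gronwall. The only cosmetic differences are that you package the linearized equation as an operator-valued ODE (the paper solves it vector-by-vector, setting $L(t,c)f=z_f(t)$) and that you control the difference-quotient equation through the mean-value representation \eqref{eq:46} along the segment joining $\YY(t,c)$ and $\YY(t,c_h)$ (valid by convexity of $C$), whereas the paper writes the Fr\'echet remainder of $A$ directly and passes to the limit by dominated convergence; both devices are equivalent here, since your uniform convergence of the averaged coefficient follows from the continuity of $L_A$ near the compact trajectory $\{(t,\YY(t,c)):t\in[0,T]\}$.
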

\begin{proof}
For $c\in C$, $e \in E_c$ and $h>0$ sufficiently small, let us consider the finite difference
\begin{equation}\label{defzn}
z_h(t,(c,e))= \frac{\YY(t,c + h e)- \YY(t, c)}{h}.
\end{equation}
From \eqref{ode1} we obtain the equation
\begin{equation}\label{ode2}
\frac{\de}{\de t} z_h(t,(c,e)) =\frac{A(t,\YY(t, c + h e))- A(t,\YY(t,c))}{h}, \qquad z_h(0,(c,e)) = e.
\end{equation}
In view of the well-posedness of \eqref{ode1}, also \eqref{ode2} is well-posed. 
We highlight now some regularity properties of the functions $t \mapsto z_h(t,(c,e))$, which naturally come from 
the assumptions on $A$.

By definition of $z_h(t,(c,e))$ and \eqref{eq:18} we observe that
\begin{equation}\label{eq:bastaquesto}
\sup_{t \in [0,T]} \| z_h(t,(c,e)) \|_E \leq \|e\|_E {\rm e}^{LT},
\end{equation}
where $L$ is the constant of Theorem~\ref{thm:Brezis}(i).
Hence $t \mapsto z_h(t,(c,e))$ are uniformly bounded in $[0,T]$ with respect to $h$.
To compute their limit as $h\downarrow 0$, notice that
an application of Theorem~\ref{thm:Brezis} (or more classical results for linear ODE's)
yields {for all $f\in E_C$} the existence and the uniqueness of a $\rmC^1$ map 
$t \mapsto z(t)\in\overline{E}_C$ 
solving the linear differential equation
\begin{equation}\label{eq:ODEe}
 z'(t) = L_A(t,\YY(t,c)) z(t),\qquad z(0)=f,
\end{equation}
for a continuous selection $L_A(t,\YY(t,c))  \in \mathrm D_{E_C} A(t,\cdot)(\YY(t,c))$, whose existence is granted by \eqref{extraA}.
For later use we denote the solution $z=z_f$ to emphasize its initial datum.

From \eqref{defzn} and \eqref{ode2} we have the identity (with the notation
$z^h(t)\coloneqq z_h(t,(c,e))$) 
\begin{equation}
\begin{split}
\!\!\!\frac{\de}{\de t} (z^h(t)-z_e(t))= h^{-1}[ & A(t,\YY(t, c)+h z^h(t))-A(t,\YY(t,c))-h L_A(t,\YY(t,c))z^h(t)]  \\
&+ L_A(t,\YY(t,c))[z^h(t)-z_e(t)]. \label{luigiexp}
\end{split}
\end{equation}
We recall now that for any $g \in\rmC^1([0,T];E)$ the map $t \mapsto \|g(t)\|_E$ is Lipschitz and satisfies (as a simple consequence of 
the fundamental theorem of calculus and \eqref{eq:intstimanorme})
$$
\bigg|\frac{\de}{\de t} \|g(t)\|_E \bigg|\leq \|g'(t)\|_E, \qquad \text{for almost every $t\in [0,T]$.}
$$
Then,  we obtain the estimates
\begin{equation*}
\begin{split}
\frac{\de}{\de t} \|z^h(t)-  z_e(t)\|_E  \leq  \big \| & h^{-1}[A(t,\YY(t,c)+h z^h(t))-A(t,\YY(t,c))-h L_A(t,\YY(t,c))z^h(t)]  \\
& +L_A(t,\YY(t,c))[z^h(t)-z_e(t)]  \big \|_E\\
\leq  \big \| & h^{-1}[A(t,\YY(t,c)+h z^h(t))-A(t,\YY(t,c))-h L_A(t,\YY(t,c))z^h(t)] \big \|_E \\
&+L\|z^h(t)-z_e(t)  \|_E,
\end{split}
\end{equation*}
since $\|L_A(t,\YY(t,c))\|_{\mathcal L(E_C,\overline{E}_C)}\leq L$, thanks to \eqref{eq:15}.
By Gronwall's inequality and using that $z^h(0)= z_e(0)=e$ we obtain
\begin{eqnarray*}
&&\|z^h(t)-  z_e(t)\|_E \\
&\leq &  \int_0^t  \big \|  h^{-1}[A(s,\YY(s,c)+h z^h(s))-A(t,\YY(s,c))-h L_A(t,\YY(t,c))z^h(s)] 
\big \|_E {\rm e}^{L(t-s)}\, \de s.
\end{eqnarray*}
By the pointwise limit
$$
\big \|  h^{-1}[A(s,\YY(s,c)+h z^h(s))-A(t,\YY(s,c))-h L_A(t,\YY(t,c))z^h(s)] \big \|_E \to 0 \qquad \text{for $h \to 0$}, 
$$
and by the dominated convergence ensured by assumption \eqref{eq:15} and the uniform boundedness in
\eqref{eq:bastaquesto}, we conclude that
$$
\lim_{h\to 0} \sup_{t \in [0,T]} \|z^h(t)-  z_e(t)\|_E  =0.
$$
Moreover, with a similar argument as above we have
$$\frac{\de}{\de t} \|z_e(t)\|_E  \leq  L \|z_e(t)  \|_E,$$ 
and again by Gronwall's inequality and $ z_e(0)=e$
\begin{equation}\label{boundedness}
 \|z_e(t)\|_E  \leq \|e\|_E e^{LT}.
\end{equation}
We shall denote now by $L(t,c)$ the  linear operators induced by \eqref{eq:ODEe} by setting $L(t,c)f =z_f(t)$ for any $f \in E_C$ (not to be confused with  $L_{A}(t,c) \in \mathrm D_{E_C} A(t,\cdot)(c)$). Notice that in view of \eqref{boundedness} the operators  $L(t,c) \in\mathcal L(E_C,\overline{E}_C)$ are  in fact  uniformly bounded.
With all of this we proved that $\YY(t,\cdot)$ is \Gateaux differentiable (uniformly in $t\in [0,T]$) and that
$$
\de\YY(t,c,e)=L(t,c)e,\qquad\text{for every $e\in E_c$, $t\in [0,T]$,}
$$
In order to improve from G-differentiability to F-differentiability, we apply Lemma~\ref{final_lemma} if we were able to show that the operator $(t,c)\mapsto L(t,c)$ is continuous from $ [0,T]\times C$ to $\mathcal L(E_C,\overline{E}_C)$. Since uniform continuity with respect to $t$ is obvious, the continuity with respect to $c$ can be again shown by considering, for $c,\,c'\in C$, the operator 
$$
Z(t,c,c') \coloneqq L(t,c) - L(t,c')
$$
Indeed, for $c,\,c'$ fixed, we have the linear differential equation in the space $\mathcal L(E_C,\overline{E}_C)$  
\begin{eqnarray*}
\frac{\de}{\de t} Z(t,c,c') =  L_A(t,\YY_t(c')) \circ Z(t,c,c') + 
[  L_A(t,\YY_t(c'))-  L_A(t,\YY_t(c))] \circ  L(t,c),
\end{eqnarray*}
with the initial condition $Z(0,c,c')=0$; here, we denoted $\YY_t(c)=\YY(t,c)$. 
Under the assumption \eqref{extraA}
and by a similar argument based on the Gronwall inequality as above, since,
for $c$ fixed, $ L_A(t,\YY_t(c'))\to \mathrm  L_A(t,\YY_t(c))$ as $\|c-c'\|_E \to 0$,
we can conclude that
$Z(t,c,c')\to 0$ as $\|c-c'\|_E\to 0$ uniformly in $[0,T]$, 
and therefore the continuity of $c\mapsto L(t,c)$. It follows that $\YY_t$ is of class $\rmC^1$, uniformly
with respect to $t\in [0,T]$, with a \Frechet differential at $c$ given by $L(t,c)$.
\end{proof}

\noindent
{\bf Acknowledgements.} L.A., M.M., and G.S.~are members of the Gruppo Nazionale per l'Analisi Matematica, la Probabilit\`a e le loro Applicazioni (GNAMPA) of the Istituto Nazionale di Alta Matematica (INdAM).
L.A.~and G.S.~acknowledge the support of the MIUR PRIN 2015 project
\emph{Calculus of Variations}. 
G.S.~acknowledges the support of IMATI-CNR and of the project 
\emph{Variational evolution problems and optimal transport} by
Cariplo foundation and Regione Lombardia.
M.F.~and M.M.~acknowledge the support of the ERC Starting grant \emph{High-Dimensional Sparse Optimal Control} (Grant agreement no.~306274) and of the DFG Project \emph{Identifikation von Energien durch Beobachtung der zeitlichen Entwicklung von Systemen} (FO 767/7).

\end{document}